\newtheorem{theorem}[equation]{Theorem}
\newtheorem{lemma}[equation]{Lemma}
\theoremstyle{definition}
\newtheorem{definition}[equation]{Definition}
\theoremstyle{remark}
\newtheorem{remark}[equation]{Remark}
\numberwithin{equation}{section} 
\def\diam{\operatorname{diam}}
\def\div{\operatorname{div}}
\def\dist{\operatorname{dist}}
\def\supp{\operatorname{supp}}
\def\BMO{\operatorname{BMO}}
\def\Lip{\operatorname{Lip}}
\def\loc{\operatorname{loc}}
\def\inter{\operatorname{int}}
\def\C{\mathbb{C}}
\def\N{\mathbb{N}}
\def\D{\mathbb{D}}
\def\F{\mathcal{F}}
\def\G{\mathcal{G}}
\def\H{\mathcal{H}}
\def\Z{\mathbb{Z}}
\def\R{\mathbb{R}}
\def\re{\mathbb{R}}
\def\Rn{\mathbb{R}^n}
\def\W{\mathcal{W}}
\def\w{\omega}
\def\pom{\partial\Omega}
\renewcommand{\emptyset}{\mbox{\textup{\O}}}
\begin{document}

\title[Elliptic measures and square function estimates]
{Elliptic measures and square function estimates on 1-sided chord-arc domains}

\author{Mingming Cao}
\address{Mingming Cao\\
Instituto de Ciencias Matem\'aticas CSIC-UAM-UC3M-UCM\\
Con\-se\-jo Superior de Investigaciones Cient{\'\i}ficas\\
C/ Nicol\'as Cabrera, 13-15\\
E-28049 Ma\-drid, Spain} \email{mingming.cao@icmat.es}

\author{Jos\'e Mar{\'\i}a Martell}
\address{Jos\'e Mar{\'\i}a Martell\\
Instituto de Ciencias Matem\'aticas CSIC-UAM-UC3M-UCM\\
Consejo Superior de Investigaciones Cient{\'\i}ficas\\
C/ Nicol\'as Cabrera, 13-15\\
E-28049 Ma\-drid, Spain} \email{chema.martell@icmat.es}

\author{Andrea Olivo}
\address{Andrea Olivo\\
Departamento de Matem\'atica\\ 
Facultad de Ciencias Exactas y Naturales\\ 
Universidad de  Buenos Aires and IMAS-CONICET\\
		Pabell\'on I (C1428EGA), Ciudad de Buenos Aires, Argentina} \email{aolivo@dm.uba.ar}

\thanks{The first author is supported by Spanish Ministry of Science and Innovation, through the Juan de la Cierva-Formación 2018, FJC2018-038526-I.
	 The first and second authors acknowledge financial support from the Spanish Ministry of Science and Innovation, through the ``Severo Ochoa Programme for Centres of Excellence in R\&D'' (CEX2019-000904-S) and  from the Spanish National Research Council, through the ``Ayuda extraordinaria a Centros de Excelencia Severo Ochoa'' (20205CEX001).  The second author also acknowledges that the research leading to these results has received funding from the European Research Council under the European Union's Seventh Framework Programme (FP7/2007-2013)/ ERC agreement no. 615112 HAPDEGMT. The second author was partially supported by the Spanish Ministry of Science and Innovation,  MTM PID2019-107914GB-I00. The last author was supported by PIP 112201501003553 (CONICET) and has received funding from the European Union’s Horizon 2020 research and innovation programme under the Marie Skłodowska-Curie grant agreement No 777822.   The last author would like to express her gratitude to the first two authors and the Instituto de Ciencias Matem\'aticas (ICMAT), for their support and hospitality.}

\date{\today}

\subjclass[2010]{42B37, 28A75, 28A78, 31A15, 31B05, 35J25, 42B25, 42B35}


\keywords{Elliptic measure, surface measure, truncated conical square function, rectifiability, Poisson kernel, 1-sided chord-arc domains, absolute continuity, $A_{\infty}$ Muckenhoupt weights}

\begin{abstract}
In nice environments, such as Lipschitz or chord-arc domains, it is well-known that the solvability of the  Dirichlet problem for an elliptic operator in $L^p$, for some finite $p$, is equivalent to the fact that the associated elliptic measure belongs to the Muckenhoupt class $A_\infty$. In turn, any of these conditions occurs if and only if the gradient of every bounded null solution satisfies  a Carleson measure estimate. This has been recently extended to much rougher settings such as those of 1-sided chord-arc domains, that is, sets which are quantitatively open and connected with a boundary which is Ahlfors-David regular. 
 In this paper, we work in the same environment and consider a qualitative analog of the latter equivalence showing that one can characterize the absolute continuity of the surface measure with respect to the elliptic measure  in terms of the finiteness almost everywhere of the truncated conical square function for any bounded null solution.  As a consequence of our main result particularized to the Laplace operator and some previous results, we show that the boundary of the domain is rectifiable if and only if the truncated conical square function is finite almost everywhere for any bounded harmonic function. Also, we obtain that for two given elliptic operators $L_1$ and $L_2$, the absolute continuity of the surface measure with respect to the elliptic measure of $L_1$ is equivalent to the same property for $L_2$ provided the disagreement  of the coefficients satisfy some quadratic estimate in truncated cones for almost everywhere vertex. Finally for the case on which $L_2$ is either the transpose of $L_1$ or its symmetric part we show the equivalence of the corresponding absolute continuity upon assuming that the antisymmetric part of the coefficients has some controlled oscillation in truncated cones for almost every vertex. 
\end{abstract}

\maketitle
\tableofcontents

\section{Introduction}\label{sec:intro}
A classical theorem of F. and M. Riesz \cite{RR} states that  
\begin{equation}\label{eq:Riesz}
\begin{array}{c}
\w \ll \mathcal{H}^1|_{\pom}\ll \w \text{ on } \partial \Omega \text{ for any simply connected } \\[0.2cm] 
\text{domain $\Omega \subset \R^2$ with a rectifiable boundary},  
\end{array}
\end{equation}
where $\w$ denotes the harmonic measure relative to the domain  $\Omega$. A quantitative version of this result was obtained later by Lavrentiev \cite{Lav} who showed that in a chord-arc domain in the plane, harmonic measure is quantitatively absolutely continuous with respect to  the arc-length measure, that is, harmonic measure is an $A_\infty$ weight with respect to surface measure. After these two fundamental results there  has been many authors seeking to find necessary and sufficient geometric criteria for the absolute continuity, or its quantitative version, of harmonic measure with respect to surface measure on the boundary of a domain in higher dimensions. In general, those can be divided into two categories: quantitative and qualitative.  

In the quantitative category it has been recently established that if $\Omega\subset\mathbb{R}^{n+1}$, $n\ge 2$, is a 1-sided CAD (chord-arc domain, cf. Definition \ref{defi:NTA}), then the following are equivalent: 
\begin{equation}\label{eq:UR}
\begin{split} 
&{\rm (a)}\quad \partial \Omega \text{ is uniformly rectifiable},  \\ 
&{\rm (b)}\quad \Omega \text{ satisfies the exterior corkscrew condition, hence it is a CAD},  \\
&{\rm (c)}\quad \w \in A_{\infty}(\sigma). 
\end{split}
\end{equation}
Here, $\sigma=\mathcal{H}^{n}|_{\pom}$ denotes the surface measure and $A_{\infty}(\sigma)$ is as mentioned above the scale-invariant version of absolute continuity.  The direction (a) implies (b) was shown by Azzam, Hofmann, Nyström, Toro, and the second named author of the present paper in \cite{AHMNT}. That (b) implies (c) was proved by David and Jerison in \cite{DJ}, and independently by Semmes in \cite{Se}.  Also, (a) implies (c) was proved by Hofmann and the second author of this paper in \cite{HM}. Both, jointly with Uriarte-Tuero \cite{HMU} also established that (c) implies (a). The equivalent statements in \eqref{eq:UR} reveal the close connection between the regularity of the boundary of a domain and the good behavior of harmonic measure with respect to surface measure. In addition,  \eqref{eq:UR} connects several known results, including the extension of \cite{RR} on Lipschitz domain \cite{D2}, $L^p_1$ domain \cite{JK1} and $\BMO_1$ domain \cite{JK2}. 

For divergence form elliptic operators $Lu=-\div(A \nabla u)$ with real variable coefficients,  that $(b)$ implies $(c)$ (with the elliptic measure $\omega_L$ in place of $\omega$) was proved by Kenig and Pipher in \cite{KP} under some Carleson measure estimate assumption for the matrix of coefficients $A$. The converse, that is, the fact that $(c)$ implies $(b)$ on a 1-sided CAD for the Kenig-Pipher class has been recently obtained by Hofmann, the second author of the present paper, Mayboroda, Toro, and Zhao in \cite{HMMTZ} (see also \cite{HMT1} for a previous result in a smaller class of operators). In another direction, it was shown in \cite{CHMT} that for any real (not necessarily non-symmetric) elliptic operator $L$, $\w_L \in A_{\infty}(\sigma)$ is equivalent to the so-called Carleson measure estimates, that is, every bounded weak null solution of $L$ satisfies Carleson measure estimates. 

On the other hand, the qualitative version of \eqref{eq:UR} has been also studied extensively. In contrast with \eqref{eq:Riesz},  some counterexamples have been presented to show how the absolute continuity of harmonic measure is indeed affected by the topology/geometry of the domain and its boundary. 
\begin{itemize}
	\item Example 1.  Lavrentiev constructed in \cite{Lav} a simply connected domain $\Omega \subset \R^2$ and a set $E \subset \partial \Omega$ such that $E$ has zero arclength, but $\w(E)>0$. 
	\vspace{0.2cm}
	\item Example 2. Bishop and Jones in \cite{BJ} found a uniformly rectifiable set $E$ on the plane and some subset of $E$ with zero arc-length which carries positive harmonic measure relative to the domain $\re^2\setminus E$.  
	\vspace{0.2cm}
	\item Example 3.  Wu proved in \cite{W} that there exists a topological ball $\Omega \subset \R^3$ and a set 
	$E \subset \partial \Omega$ lying on a 2-dimensional hyperplane so that Hausdorff dimension of $E$ is 1 
	(which implies $\sigma(E)=0$) but $\w(E)>0$.  
	\vspace{0.2cm}
	\item Example 4. In \cite{AMT}, Azzam, Mourgoglou and Tolsa obtained that for all $n \geq 2$,  there are a Reifenberg flat domain $\Omega \subset \R^{n+1}$ and a set $E \subset \partial \Omega$ such that $\w(E)>0=\sigma(E)$.
\end{itemize}
Compared with \eqref{eq:Riesz}, Examples 1 and 2 indicate that both the regularity of the boundary and the connectivity of the domain seem  to be necessary for absolute continuity to occur. However, Examples 3 and 4 say that $\w \ll \sigma$ fails in the presence of some connectivity assumption. Indeed, a quantitative form of path connectedness is contained in Example 4 since Reifenberg flat domains which are sufficiently flat are in fact NTA domains (cf. Definition \ref{defi:NTA}), see \cite[Theorem~3.1]{KT}. Taking into consideration these, it is natural to investigate what extra mild assumptions are necessary to obtain the absolute continuity of harmonic measure.  

It was shown by McMillan \cite[Theorem~2]{M} that for bounded simply connected domains $\Omega \subset \C$, $\w \ll \sigma \ll \w$ on the set of cone points. Later, Bishop and Jones \cite{BJ} obtained that for any simply connected domain $\Omega \subset \R^2$ and curve $\Gamma$ of finite length, $\w \ll \sigma$ on $\partial \Omega \cap \Gamma$. That result  refined the conclusions in \cite[p.~471]{O} and \cite[Theorem~3]{KW} where $\Gamma$ was a line and a quasi-smooth curve respectively. Beyond that, in a Wiener regular domain with large complement (cf. \cite[Definition~1.5]{AAM}), Akman, Azzam and Mourgoglou  \cite{AAM} gave a characterization of sets of absolute continuity in terms of the cone point condition and the rectifiable structure of elliptic measure. Let us point out that in all of the just mentioned results, the absolute continuity happens locally.  In the case of the whole  boundary, for every Lipschitz domain Dahlberg \cite{D1} proved that harmonic measure belongs to the reverse Hölder class with exponent $2$ with respect to surface measure, this, in turn, yields 
 $\w \ll \sigma \ll \w$ holds. This was extended to the setting of CAD domains in \cite{DJ, Se}.  For general NTA domains $\Omega \subset \mathbb{R}^{n+1}$, $n \geq 1$, Badger \cite{B} proved that $\sigma \ll \w$ if  the boundary $\partial \Omega$ has finite surface measure. When $\Omega$ is a 1-sided CAD, Akman, Badger, Hofmann and the second author established in \cite{ABHM} that $\partial \Omega$ is rectifiable if and only if $\sigma \ll \w$ on $\partial \Omega$, which is also equivalent to the fact that $\partial \Omega$ possesses exterior corkscrew points in a qualitative way and that $\partial \Omega$ can be covered $\sigma$-a.e. by a countable union of portions of boundaries of bounded chord-arc subdomains of $\Omega$. Based on a qualitative Carleson measure condition, they also got that the same conclusions hold for some class of elliptic operators with regular coefficients. The remarkable result in \cite{AHMMMTV} proved that, in any dimension and in the absence of any connectivity condition, any piece of the boundary with finite surface measure is rectifiable, provided surface measure is absolutely continuous with respect to harmonic measure on that piece. The converse was treated in \cite{ABoHM} by Akman, Bortz, Hofmann assuming that the boundary has locally finite surface measure and satisfies some weak lower Ahlfors regular condition.

Motivated by the previous work, the purpose of this article is to find characterizations of the absolute continuity of surface measure with respect to elliptic measure for real second order divergence form uniformly elliptic operators.  Our main goal is to establish the equivalence between the absolute continuity and the finiteness almost everywhere of the conical square function applied to any bounded weak solution. 
To set the stage let us give few definitions (see Section \ref{sec:pre} for more definitions and notation).  
The conical square function is defined as 
\begin{align*}
S_{\alpha}u(x) := \left(\iint_{\Gamma_{\alpha}(x)} 
|\nabla u(Y)|^2 \delta(Y)^{1-n} dY \right)^{\frac12}, \qquad x\in\pom,
\end{align*}
where $\delta(\cdot)=\dist(\cdot\,,\pom)$ and the cone $\Gamma_{\alpha}(x)$ with vertex at $x\in\pom$ and aperture $\alpha>0$ is given by 
\begin{align*}
\Gamma_{\alpha}(x) = \{Y \in \Omega: |Y-x|<(1+\alpha) \delta(Y) \}.
\end{align*}
Similarly,  we define the truncated square function $S_{\alpha}^r$ by integrating over the  truncated cone 
$\Gamma_{\alpha}^r(x) := \Gamma_{\alpha}(x) \cap B(x, r)$ for any $r>0$.  

Our main result is a qualitative analog of \cite{KKiPT} and \cite[Theorem 1.1]{CHMT}. More precisely, condition \eqref{list:wL} is a qualitative analog of $\omega_L\in A_\infty(\sigma)$ ---or equivalently $\sigma\in A_\infty(\omega_L)$--- while condition \eqref{list:Sr-L2}, or \eqref{list:Sr}, or \eqref{list:Srx} is a qualitative version of the so-called Carleson measure condition, which is in turn equivalent to some local scale-invariant $L^2$ estimate for the truncated conical square function.

\begin{theorem}\label{thm:abs-cont}
	Let $\Omega \subset \R^{n+1}$, $n\ge 2$,  be a $1$-sided CAD (cf. Definition \ref{defi:NTA}) and write  $\sigma:=\H^n |_{\partial \Omega}$. There exists $\alpha_0>0$ (depending only on the 
	$1$-sided CAD constants) such that for each fixed $\alpha \geq \alpha_0$ and for every real (not necessarily 
	symmetric) elliptic operator $Lu=-\div(A \nabla u)$ the following statements are equivalent: 
	\begin{list}{\textup{(\theenumi)}}{\usecounter{enumi}\leftmargin=1cm \labelwidth=1cm \itemsep=0.2cm 
			\topsep=.2cm \renewcommand{\theenumi}{\alph{enumi}}}
		\item\label{list:wL} $\sigma \ll \w_L$ on $\partial \Omega$. 
		
		\item\label{list:si-wL-si} $\partial \Omega=\bigcup_{N\geq 0} F_N$, where $\sigma(F_0)=0$ and for each 
		$N \geq 1$ there exists $C_N>1$ such that 
		\begin{align*}
		C_N^{-1} \sigma(F) \leq \w_L(F) \leq C_N \sigma(F), \quad \forall\,F \subset F_N.  
		\end{align*}
		
		\item\label{list:Sr-L2} $\partial \Omega=\bigcup_{N \geq 0} F_N$, where $\sigma(F_0)=0$, for each $N \geq 1$, 
		$F_N=\partial \Omega \cap \partial \Omega_N$ for some bounded $1$-sided CAD $\Omega_N \subset \Omega$, 
		and $S_{\alpha}^r u \in L^2(F_N, \sigma)$ for every weak solution $u \in W_{\loc}^{1,2}(\Omega) \cap L^{\infty}(\Omega)$ of $Lu=0$ in $\Omega$ and for all (or for some) $r>0$. 
		
		\item\label{list:Sr}  $S_{\alpha}^r u(x)<\infty$ for $\sigma$-a.e.~$x \in \partial \Omega$ for every weak solution $u \in W_{\loc}^{1,2}(\Omega)\cap L^{\infty}(\Omega)$ of  $Lu=0$ in $\Omega$ and 
		for all (or for some) $r>0$. 
		
		\item\label{list:Srx} For every weak solution $u \in W_{\loc}^{1,2}(\Omega)\cap L^{\infty}(\Omega)$ of 
		$Lu=0$ in $\Omega$ and for $\sigma$-a.e.~$x \in \partial \Omega$ there exists $r_x>0$ such that  
		$S_{\alpha}^{r_x} u(x)<\infty$.  
	\end{list}
\end{theorem}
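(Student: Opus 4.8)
The plan is to establish $(\ref{list:wL})\Leftrightarrow(\ref{list:si-wL-si})$ together with the cycle $(\ref{list:wL})\Rightarrow(\ref{list:Sr-L2})\Rightarrow(\ref{list:Sr})\Rightarrow(\ref{list:Srx})\Rightarrow(\ref{list:wL})$. The equivalence $(\ref{list:wL})\Leftrightarrow(\ref{list:si-wL-si})$ is pure measure theory: $\w_L$ is a finite Borel measure on $\pom$ and $\sigma$ is $\sigma$-finite ($\pom$, being ADR, is $\sigma$-compact with locally finite $\sigma$), so $\sigma\ll\w_L$ provides a Borel density $h$ with $d\sigma=h\,d\w_L$ and $0\le h<\infty$ $\w_L$-a.e.; taking $F_0:=\{h=0\}\cup\{h=\infty\}$ (so $\sigma(F_0)=0$) and $F_N:=\{N^{-1}\le h\le N\}\setminus\bigcup_{1\le M<N}F_M$ for $N\ge1$ yields the decomposition, and integrating $d\sigma=h\,d\w_L$ over Borel subsets of $F_N$ gives the two-sided bound with $C_N=N$; the converse follows at once from the lower bound in $(\ref{list:si-wL-si})$ and $\sigma(F_0)=0$. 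The implications $(\ref{list:Sr-L2})\Rightarrow(\ref{list:Sr})\Rightarrow(\ref{list:Srx})$ are also elementary: membership in $L^2(F_N,\sigma)$ forces finiteness $\sigma$-a.e.\ on $F_N$, hence on $\pom$ after taking the union over $N$; finiteness for some (or every) $r$ trivially gives $(\ref{list:Srx})$ with $r_x\equiv r$; and one only needs to record that $r\mapsto S_\alpha^r u(x)$ is nondecreasing, so that $S_\alpha^{r_0}u(x)<\infty$ for some $r_0$ is equivalent to $S_\alpha^r u(x)\to0$ as $r\to0^+$, which keeps the ``for all (or for some) $r$'' clauses consistent throughout.

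For $(\ref{list:wL})\Rightarrow(\ref{list:Sr-L2})$ I would work through $(\ref{list:si-wL-si})$, which on each $F_N$ furnishes a strong qualitative form of $\w_L\in A_\infty(\sigma)$ (comparability on \emph{every} subset). Fixing $N$ and a surface ball $\Delta$, I would run the usual dyadic corona/stopping-time decomposition of the Carleson box over $\Delta$, stopping at cubes where $F_N$ loses $\sigma$-density or where the density ratio $\w_L(Q)/\sigma(Q)$ escapes a controlled range. The sawtooth regions $\Omega_M$ built over the resulting coherent trees are bounded $1$-sided CAD subdomains of $\Omega$ whose boundaries cover $\sigma$-a.e.\ point of $F_N\cap\Delta$, and on each of them the local comparability transfers to $\w_L^{\Omega_M}\in A_\infty(\H^n|_{\partial\Omega_M})$ via the standard projection of $A_\infty$ onto sawtooths. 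On such a bounded CAD the quantitative result of \cite{CHMT} (cf.\ also \cite{KKiPT}) applies, so every bounded weak solution $u$ of $Lu=0$ satisfies the Carleson measure estimate $\iint_{B\cap\Omega_M}|\nabla u|^2\,\delta\,dY\lesssim\|u\|_{L^\infty(\Omega)}^2\,\H^n(B\cap\partial\Omega_M)$ for all balls $B$. A Fubini argument over cones — valid precisely for $\alpha\ge\alpha_0$, with $\alpha_0$ governed by the corkscrew constant so that a truncated cone at $\sigma$-a.e.\ point of $\partial\Omega\cap\partial\Omega_M$ lies inside $\Omega_M$ — then gives $S_\alpha^r u\in L^2(\partial\Omega\cap\partial\Omega_M,\sigma)$ for every $r>0$. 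Enumerating the countable family $\{\Omega_M\}$ obtained over all $N$ and all $\Delta$ in a countable family of surface balls, and absorbing the residual $\sigma$-null set into $F_0$, delivers $(\ref{list:Sr-L2})$.

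The substantial direction is $(\ref{list:Srx})\Rightarrow(\ref{list:wL})$, which I would prove by contraposition as a qualitative analog of the implication ``Carleson measure estimates $\Rightarrow\w_L\in A_\infty(\sigma)$'' of \cite{CHMT} and \cite{KKiPT}. If $\sigma\not\ll\w_L$, then after intersecting with a surface ball one obtains a compact set $E$ with $\w_L(E)=0<\sigma(E)$, and the goal is to produce a \emph{single} bounded weak solution $u$ of $Lu=0$ with $S_\alpha^{r_0}u(x)=\infty$ on a positive-$\sigma$-measure subset of $E$, contradicting $(\ref{list:Srx})$. The building blocks would be differences $b_Q$ of the elliptic measures of two concentric surface balls centered at $x_Q$ with a large, fixed ratio $\Lambda$ of radii: the subtraction localizes $b_Q$ to a Carleson-box annulus over $Q$, and a reverse-doubling/change-of-pole estimate for $\w_L$ shows $b_Q$ oscillates there by a definite amount, so that $\iint_{W_Q}|\nabla b_Q|^2\,\delta^{1-n}\,dY\gtrsim1$ with $W_Q\subset\Omega$ the Whitney region over $Q$. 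Covering $E$ at each of a sparse sequence of scales (adapted to $\Lambda$ and to the decay of the $\w_L$-mass of neighborhoods of $E$) by dyadic cubes meeting $E$, so that the regions $W_Q$ are pairwise separated and the off-support tails of the $b_Q$ sum to a bounded function, and setting $u:=\sum_Q b_Q$ — the signs being immaterial since $|\nabla u|^2$ is essentially $\sum_Q|\nabla b_Q|^2$ pointwise — one gets $\|u\|_{L^\infty(\Omega)}\lesssim1$ while, for every $x\in E$, a block contributes $\gtrsim1$ to $(S_\alpha^{r_0}u(x))^2$ at every scale of the sequence, forcing $S_\alpha^{r_0}u(x)=\infty$. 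A softer alternative here is to convert $(\ref{list:Srx})$ into a quantitative weak-type Carleson bound on a large piece of $\pom$ by a Nikishin--Stein-type factorization and then invoke a localized version of \cite{CHMT}.

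The main obstacle is this last implication. In the direct construction one must choose the covering scales, the ratio $\Lambda$, and the relevant subset of $E$ so that the regions $W_Q$ are genuinely disjoint \emph{and} the off-support tails of the $b_Q$ sum to a bounded function — which requires sharp decay (reverse-doubling and change-of-pole) estimates for $\w_L$ combined with the smallness encoded by $\w_L(E)=0$ — together with a non-degeneracy estimate producing a uniform lower bound for the oscillation of each $b_Q$ on $W_Q$ in the bare $1$-sided CAD setting, and one must also match $\alpha_0$ to $\Lambda$ so that the relevant cone truly contains $W_Q$. In the softer alternative the difficulty shifts to verifying the continuity-in-measure hypothesis of the Nikishin--Stein theorem for $u\mapsto S_\alpha^{r_0}u$ on the non-reflexive space of bounded null solutions — in particular controlling the near-boundary part of the truncated cone uniformly — and to formulating and proving a localized form of \cite{CHMT} and \cite{KKiPT} relative to a fixed subset of $\pom$. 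A secondary difficulty, in $(\ref{list:wL})\Rightarrow(\ref{list:Sr-L2})$, is that the corona/sawtooth construction and the transference of $A_\infty$ must be carried out uniformly over the pieces $F_N$, whose geometry is not controlled a priori beyond the comparability of $\w_L$ and $\sigma$ given by $(\ref{list:si-wL-si})$.
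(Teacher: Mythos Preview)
Your treatment of the easy parts is fine, and your Radon--Nikodym argument for $(\ref{list:wL})\Leftrightarrow(\ref{list:si-wL-si})$ is actually cleaner than the paper's, which redoes this via a dyadic stopping-time and Lebesgue differentiation argument.

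For $(\ref{list:wL})\Rightarrow(\ref{list:Sr-L2})$ your plan is in the right spirit but takes an unnecessary detour. The paper also runs a stopping time on the ratio $\w(Q)/\sigma(Q)$ inside a fixed $Q_0$, producing a family $\F_N$ and a sawtooth $\Omega_{\F_N,Q_0}^\vartheta$; but instead of transferring $A_\infty$ to the sawtooth and then invoking the full quantitative theorem of \cite{CHMT}, it uses the stopping-time bound $\w(Q)\approx_N\sigma(Q)$ directly to compare $\delta$ with the (normalized) Green function $\G$ and then appeals to \cite[Proposition~3.58]{CHMT} to bound $\iint_{\Omega_{\F_N,Q_0}^\vartheta}|\nabla u|^2\,\G\,dY$. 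A Fubini argument finishes. Your transference step ``$\w_L^{\Omega_M}\in A_\infty(\H^n|_{\partial\Omega_M})$ via projection'' is nontrivial to justify and is simply not needed.

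The genuine gap is in $(\ref{list:Srx})\Rightarrow(\ref{list:wL})$. Your construction $u=\sum_Q b_Q$ with $b_Q$ a difference of elliptic measures of concentric surface balls runs into exactly the obstacles you list, and one of your heuristics is false: the claim that ``$|\nabla u|^2$ is essentially $\sum_Q|\nabla b_Q|^2$ pointwise'' cannot hold, since each $b_Q$ is an $L$-solution with $\nabla b_Q$ supported on all of $\Omega$, and the cross terms are not negligible outside the Whitney regions. Even if the annuli in $\pom$ are arranged to be disjoint (so that $\sum_Q b_Q=\w_L^X(\bigcup_Q A_Q)\le 1$), the oscillation of $u$ near a fixed cube $Q$ is polluted by all the other $b_{Q'}$, and nothing in your sketch prevents cancellation of the lower bound at each scale. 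The Nikishin--Stein alternative is too speculative: the domain of the ``operator'' $u\mapsto S_\alpha^{r_0}u$ is the space of bounded null solutions, which has no useful linear or convex structure for that machinery.

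The paper resolves this by producing a \emph{single} bounded solution of the form $u(X)=\w_L^X(S)$ for a carefully chosen Borel set $S\subset Q_0$, so $\|u\|_{L^\infty}\le 1$ is automatic. The key new ingredient is a \emph{good $\varepsilon_0$-cover of $F$ of infinite length} with respect to $\w_L^{X_{Q_0}}$: because $\w_L^{X_{Q_0}}(F)=0$, the finite-length construction of \cite[Lemma~3.5]{CHMT} can be iterated indefinitely (inside each terminal cube one re-runs the Calder\'on--Zygmund stopping time), yielding nested families $\{\mathcal{O}_\ell\}_{\ell\ge1}$ with $\w_L^{X_{Q_0}}(\mathcal{O}_\ell)\le\varepsilon_0^{\ell-1}\w_L^{X_{Q_0}}(Q_0)$. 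One then sets $S=\bigcup_{\ell\ge2}(\mathcal{O}_{\ell-1}^{(\eta)}\setminus\mathcal{O}_\ell)$ and passes to the limit in the oscillation estimate \cite[Lemma~3.24]{CHMT}, which gives, for every $y\in F$ and every $\ell\ge1$, two corkscrew points inside a fattened Whitney region $U_{Q^\ell(y),\eta^3}^{\vartheta_0}$ at which $u$ differs by at least a fixed $c_0>0$. That produces a contribution $\gtrsim c_0^2$ to the dyadic square function at every level, hence $S_{Q_0}^{\vartheta_0,\eta}u(y)=\infty$; a routine comparison $\Gamma_{Q_0}^{\vartheta_0,\eta}(y)\subset\Gamma_\alpha^r(y)$ for $\alpha\ge\alpha_0$ then contradicts $(\ref{list:Srx})$. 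The point you are missing is precisely this: rather than summing building blocks and fighting for boundedness and absence of cancellation, take the elliptic measure of a single set built from an \emph{infinite} $\varepsilon_0$-cover.
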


\begin{remark}
	We would like to make the following observation regarding the parameter $\alpha$ in the previous statement. 
	Note first that if one of the conditions \eqref{list:Sr-L2}, \eqref{list:Sr}, or \eqref{list:Srx} holds for some $\alpha>0$,  
	then the same condition is automatically true for all $\alpha'\le \alpha$. Thus, \eqref{list:wL} or \eqref{list:si-wL-si} 
	implies \eqref{list:Sr-L2}, \eqref{list:Sr}, or \eqref{list:Srx}  holds for all $\alpha>0$. On the other hand, for the converse implications we need to make sure that $\alpha$ does not get too small to prevent having empty cones, in which case the corresponding assumption trivially holds. 
\end{remark}

When turning to the harmonic measure, we obtain the following connection between the rectifiability of the 
boundary, the absolute continuity of surface measure with respect to harmonic measure, and the square functions  estimates for harmonic functions.  

\begin{theorem}\label{thm:harmonic}
	Let $\Omega \subset \R^{n+1}$, $n\ge 2$, be a $1$-sided CAD and write  $\sigma:=\H^n |_{\partial \Omega}$. There exists $\alpha_0>0$ (depending only on the $1$-sided CAD constants) such that for each fixed $\alpha \geq \alpha_0$ if we write $\w$ to denote the harmonic measure for $\Omega$ then the following statements are equivalent: 
	\begin{list}{\textup{(\theenumi)}}{\usecounter{enumi}\leftmargin=1cm \labelwidth=1cm \itemsep=0.2cm \topsep=.2cm \renewcommand{\theenumi}{\alph{enumi}}}
		\item\label{list:rect} $\partial \Omega$ is rectifiable, that is, $\sigma$-almost all of $\partial\Omega$ can be covered by a countable union of $n$-dimensional (possibly rotated) Lipschitz graphs.
		\item\label{list:w} $\sigma \ll \w$ on $\partial \Omega$.
		\item\label{list:harmonic} $S_{\alpha}^r u(x)<\infty$ for $\sigma$-a.e.~$x \in \partial \Omega$ for every bounded harmonic function $u \in W_{\loc}^{1,2}(\Omega)$ and 
		for all (or for some) $r>0$. 
	\end{list}
\end{theorem}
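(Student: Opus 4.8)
The plan is to deduce Theorem~\ref{thm:harmonic} from Theorem~\ref{thm:abs-cont} (specialized to the Laplacian $L_0=-\div(\nabla\cdot)$, for which $\w_{L_0}=\w$) combined with the result of Akman, Badger, Hofmann, and the second author in \cite{ABHM}. Concretely, for the Laplacian, Theorem~\ref{thm:abs-cont} already gives that \eqref{list:w} (i.e. $\sigma\ll\w$ on $\pom$) is equivalent to conditions \eqref{list:Sr-L2}, \eqref{list:Sr}, and \eqref{list:Srx}; in particular \eqref{list:w}$\Leftrightarrow$\eqref{list:harmonic} is immediate since \eqref{list:harmonic} here is precisely condition \eqref{list:Sr} with $L=L_0$ and ``harmonic function'' in place of ``null solution of $L$''. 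So the only genuinely new input needed is the equivalence \eqref{list:rect}$\Leftrightarrow$\eqref{list:w}, i.e. that $\pom$ is rectifiable if and only if $\sigma\ll\w$ on $\pom$.

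First I would invoke \cite{ABHM}: in a $1$-sided CAD, $\pom$ is rectifiable if and only if $\sigma\ll\w$ on $\pom$, which is also equivalent to the statement that $\pom$ can be covered up to a $\sigma$-null set by a countable union of portions $\partial\Omega\cap\partial\Omega_N$ of boundaries of bounded chord-arc subdomains $\Omega_N\subset\Omega$. This last formulation is exactly the geometric decomposition appearing in condition \eqref{list:Sr-L2} of Theorem~\ref{thm:abs-cont} (each $\Omega_N$ being in particular a bounded $1$-sided CAD). Thus the chain is: \eqref{list:rect} $\overset{\cite{ABHM}}{\Longleftrightarrow}$ \eqref{list:w} $\overset{\text{Thm.~\ref{thm:abs-cont}}}{\Longleftrightarrow}$ \eqref{list:Sr}$=$\eqref{list:harmonic}. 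I would write this out as a short cycle of implications, being careful that the aperture $\alpha_0$ is taken to be the maximum (or just as large as needed) of the thresholds coming from Theorem~\ref{thm:abs-cont} and from any cone-nonemptiness requirement, so that a single $\alpha_0$ works for all the stated equivalences; the remark following Theorem~\ref{thm:abs-cont} about monotonicity of the truncated square function conditions in $\alpha$ handles the ``for each fixed $\alpha\ge\alpha_0$'' phrasing.

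I expect essentially no serious obstacle here, since the theorem is a corollary: the heavy lifting is in Theorem~\ref{thm:abs-cont} (already available) and in \cite{ABHM} (a cited external result). The one point requiring a little care is checking that the hypotheses of \cite{ABHM} are met in our setting — in particular that ``$1$-sided CAD'' as in Definition~\ref{defi:NTA} is precisely the class treated there, and that the notion of rectifiability in \eqref{list:rect} (countable union of $n$-dimensional Lipschitz graphs, $\sigma$-a.e.) matches the one used in \cite{ABHM}. A secondary bookkeeping point is that Theorem~\ref{thm:abs-cont}\eqref{list:Sr-L2} asks for $S^r_\alpha u\in L^2(F_N,\sigma)$ for \emph{all} weak solutions $u$, while \eqref{list:harmonic} only asserts finiteness $\sigma$-a.e.; but this is already absorbed in Theorem~\ref{thm:abs-cont} itself (the equivalence of its own conditions (c)–(e)), so for the present proof I only need to quote that theorem and \cite{ABHM} and assemble the equivalences. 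The write-up should therefore be just a few lines.
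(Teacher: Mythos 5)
Your proposal is correct and matches the paper's own proof exactly: the paper states that the equivalence of \eqref{list:rect} and \eqref{list:w} is from \cite{ABHM}, while Theorem~\ref{thm:abs-cont} (applied with $L$ the Laplacian) gives the equivalence of \eqref{list:w} and \eqref{list:harmonic}. Your additional bookkeeping remarks about the aperture $\alpha_0$ and the matching of the rectifiability notion are sensible but not spelled out in the paper, which simply cites the two results.
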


The equivalence of \eqref{list:rect} and \eqref{list:w} was established in \cite{ABHM}, 
while Theorem \ref{thm:abs-cont} readily gives that \eqref{list:w} is equivalent to \eqref{list:harmonic}.

As an application of Theorem \ref{thm:abs-cont}, we can obtain some additional results. The first deals with perturbations (see \cite{AHMT, CHM, CHMT, D3, FJK, F, FKP, MPT1, MPT2}) and should be compared with its quantitative version in the 1-sided CAD setting \cite[Theorems 1.3]{CHMT}. We note that our next result provides also a quantitative version of the work by Fefferman in \cite{F} who showed that in the unit ball if the right hand side of \eqref{eq:rhoAA} is an essentially bounded function (rather than knowing that is finite almost everywhere) then one has $\omega_{L_0}  \in A_\infty(\sigma)$ if and only if $\omega_{L_1}  \in A_\infty(\sigma)$.

\begin{theorem}\label{thm:perturbation}
	Let $\Omega \subset \R^{n+1}$, $n\ge 2$,  be a $1$-sided CAD and write  $\sigma:=\H^n |_{\partial \Omega}$. There exists $\alpha_0>0$ (depending only on the 
	$1$-sided CAD constants) such that if the real (not necessarily symmetric) elliptic operators 
	$L_0 u = -\div(A_0 \nabla u)$ and $L_1 u = -\div(A_1 \nabla u)$ satisfy 
	for some $\alpha \ge\alpha_0$ and for some $r>0$
		\begin{align}\label{eq:rhoAA}
	\iint_{\Gamma_{\alpha}^{r}(x)} 
	\frac{\varrho(A_0, A_1)(X)^2}{\delta(X)^{n+1}} dX < \infty, 
	\quad \sigma \text{-a.e. } x \in \partial \Omega,
	\end{align}
	where
	\begin{equation*}
	\varrho(A_0, A_1)(X) :=\sup_{Y \in B(X, \delta(X)/2)} |A_0(Y) - A_1(Y)|, \quad X \in \Omega, 
	\end{equation*} 
		 then $\sigma \ll \w_{L_0}$ if and only if $\sigma \ll \w_{L_1}$. 
\end{theorem}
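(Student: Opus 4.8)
The plan is to deduce the statement from Theorem~\ref{thm:abs-cont}, which is why it appears as an application of the main result. First note that $\varrho(A_0,A_1)=\varrho(A_1,A_0)$, so hypothesis \eqref{eq:rhoAA} is symmetric in $L_0$ and $L_1$; it therefore suffices to show that $\sigma\ll\w_{L_0}$ together with \eqref{eq:rhoAA} implies $\sigma\ll\w_{L_1}$. By Theorem~\ref{thm:abs-cont} applied to $L_1$ (the implication \eqref{list:Sr}$\Longrightarrow$\eqref{list:wL}), it is enough to exhibit some $r'>0$ such that, for \emph{every} weak solution $v\in W_{\loc}^{1,2}(\Omega)\cap L^{\infty}(\Omega)$ of $L_1v=0$ in $\Omega$, one has $S_{\alpha}^{r'}v(x)<\infty$ for $\sigma$-a.e.\ $x\in\partial\Omega$.

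Fix such a $v$ and split $v=w+z$, where $w$ is the bounded solution of $L_0w=0$ in $\Omega$ sharing the (bounded) boundary data of $v$ (so that $\|w\|_{L^{\infty}(\Omega)}\le\|v\|_{L^{\infty}(\Omega)}$), and $z=v-w$. Since $L_1v=0$, the function $z$ solves
\[
L_0z=-\div\big((A_0-A_1)\nabla v\big)=:\div\vec{f}\quad\text{in }\Omega,\qquad z=0\ \text{on }\partial\Omega,
\]
with $\vec{f}=-(A_0-A_1)\nabla v\in L^{2}_{\loc}(\Omega)$. By subadditivity $S_{\alpha}^{r'}v\le S_{\alpha}^{r'}w+S_{\alpha}^{r'}z$. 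The first term is harmless: $w$ is a bounded weak solution of $L_0w=0$ and $\sigma\ll\w_{L_0}$, so Theorem~\ref{thm:abs-cont} applied to $L_0$ (the implication \eqref{list:wL}$\Longrightarrow$\eqref{list:Sr}) gives $S_{\alpha}^{r'}w(x)<\infty$ for $\sigma$-a.e.\ $x$ and every $r'>0$. Everything thus reduces to estimating $S_{\alpha}^{r'}z$.

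For this I would establish a \emph{qualitative} conical square function estimate for the Dirichlet problem of $L_0$ with divergence datum: for suitable $C>1$ and $\beta\ge\alpha$, and for $\sigma$-a.e.\ $x\in\partial\Omega$,
\[
S_{\alpha}^{r'}z(x)^{2}=\iint_{\Gamma_{\alpha}^{r'}(x)}|\nabla z(X)|^{2}\,\delta(X)^{1-n}\,dX\ \lesssim\ \iint_{\Gamma_{\beta}^{Cr'}(x)}|\vec{f}(X)|^{2}\,\delta(X)^{1-n}\,dX,
\]
valid whenever the right-hand side is finite. Granting this, a Whitney decomposition of the cone finishes the proof: if $I$ is a Whitney cube of $\Omega$ with a representative point $X_I$, then $\delta(X)\approx\delta(X_I)\approx\ell(I)$ on $I$ and $|\vec{f}(X)|\le|A_0(X)-A_1(X)|\,|\nabla v(X)|\le\varrho(A_0,A_1)(X_I)\,|\nabla v(X)|$ there ---this is exactly why $\varrho$ is defined with a supremum over $B(X,\delta(X)/2)$--- while Caccioppoli's inequality for the bounded solution $v$ gives $\iint_{I}|\nabla v|^{2}\lesssim\ell(I)^{n-1}\|v\|_{L^{\infty}(\Omega)}^{2}$. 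Summing over the Whitney cubes meeting $\Gamma_{\beta}^{Cr'}(x)$ and converting the sum back into an integral yields
\[
S_{\alpha}^{r'}z(x)^{2}\ \lesssim\ \|v\|_{L^{\infty}(\Omega)}^{2}\iint_{\Gamma_{\beta}^{Cr'}(x)}\frac{\varrho(A_0,A_1)(X)^{2}}{\delta(X)^{n+1}}\,dX,
\]
and, after the routine change-of-aperture and change-of-truncation reductions (choosing $r'$ with $Cr'\le r$), hypothesis \eqref{eq:rhoAA} shows the right-hand side is finite for $\sigma$-a.e.\ $x$. Combining the two bounds, $S_{\alpha}^{r'}v(x)<\infty$ for $\sigma$-a.e.\ $x$, which by the first paragraph completes the proof; swapping the roles of $L_0$ and $L_1$ then gives the full equivalence.

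I expect the main obstacle to be the displayed qualitative estimate for $z$, together with the attendant point that $z$ genuinely has vanishing boundary data. In the quantitative theory such a bound would follow from $\w_{L_0}\in A_{\infty}(\sigma)$ and the usual good-$\lambda$/Carleson machinery, but here only the qualitative $\sigma\ll\w_{L_0}$ is at our disposal. I would circumvent this by exploiting the full force of condition \eqref{list:Sr-L2} of Theorem~\ref{thm:abs-cont} applied to $L_0$, namely the decomposition $\partial\Omega=\bigcup_{N}F_N$ with $\sigma(F_0)=0$ and $F_N=\partial\Omega\cap\partial\Omega_N$ for bounded $1$-sided CAD subdomains $\Omega_N\subset\Omega$, intersected with the super-level sets of $x\mapsto\iint_{\Gamma_{\alpha}^{r}(x)}\varrho(A_0,A_1)(X)^{2}\delta(X)^{-n-1}\,dX$; it suffices to prove $S_\alpha^{r'}v(x)<\infty$ for $\sigma$-a.e.\ $x$ on each resulting piece. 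There the matter is localized near a chord-arc portion of $\partial\Omega$ along which $z$ vanishes and the datum is uniformly controlled, so that the required divergence-datum estimate becomes a genuinely quantitative \emph{local} bound near that portion, for which the standard tools apply. Checking that the truncated cones over points of $F_N$ stay inside $\Omega_N$ with $\delta_{\Omega_N}\approx\delta_{\Omega}$ along them, justifying the splitting in this localized setting, and absorbing the errors produced by truncating the cones and by passing between $\Omega$ and $\Omega_N$, is where the bulk of the technical work will lie.
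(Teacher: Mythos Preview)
Your approach diverges from the paper's and carries real gaps. The paper does \emph{not} split $v=w+z$; it proves the more general Theorem~\ref{thm:AAAD} and deduces Theorem~\ref{thm:perturbation} by taking $A=A_0-A_1$, $D=0$. For Theorem~\ref{thm:AAAD} one works directly with the $L_1$-solution $u$ and estimates $S_{Q_0}^\vartheta u$ in $L^2$ on pieces $\widehat E_N\subset Q_0$ coming from a \emph{double} stopping time: a family $\F_N$ controls the ratio $\w/\sigma$ for the normalized $L_0$-elliptic measure (as in \eqref{list:wL}$\Rightarrow$\eqref{list:Sr-L2}), a second family $\widetilde\F_N$ caps the dyadic Carleson sums $\sum_{Q\supsetneq Q_j}\gamma_Q^\vartheta$ built from the perturbation data, and $\widehat\F_N$ is the common refinement. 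On the sawtooth $\Omega_{\widehat\F_N,Q_0}^\vartheta$ one has $\G/\delta\approx_N 1$, and the integration-by-parts estimate of \cite[Proposition~4.18]{CHMT} bounds $\iint_{\Omega_{\widehat\F_{N,M},Q_0}^\vartheta}|\nabla u|^2\G$ by $\sigma(Q_0)$ plus the integrals of $a^2/\delta$ and $|\div_C D|^2\delta$ over the fattened sawtooth; the second stopping time then caps those by $N^2\sigma(Q_0)$.

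Your route has two concrete obstacles you do not overcome. First, for an arbitrary $v\in W^{1,2}_{\loc}(\Omega)\cap L^\infty(\Omega)$ solving $L_1v=0$ there is no canonical ``boundary datum'' to hand to $L_0$: non-tangential limits of $v$ exist only $\w_{L_1}$-a.e., and you are trying to \emph{prove} $\sigma\ll\w_{L_1}$, so you cannot yet compare $\w_{L_1}$-null and $\w_{L_0}$-null sets; hence $w$, and with it $z$ and the assertion $z|_{\pom}=0$, is not well-defined. Second, the displayed pointwise bound $S_\alpha^{r'}z(x)^2\lesssim\iint_{\Gamma_\beta^{Cr'}(x)}|\vec f|^2\delta^{1-n}$ is not available: the solution operator for $L_0z=\div\vec f$ is non-local, so $\nabla z$ inside a cone at $x$ depends on $\vec f$ everywhere in $\Omega$, not just in an enlarged cone. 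The localization you sketch at the end would, to succeed, need exactly the Green-function integration-by-parts mechanism of \cite[Proposition~4.18]{CHMT} on sawtooths---at which point the detour through $w$ and $z$ is superfluous, since that mechanism applies directly to $u$ and produces the perturbation terms without any splitting.
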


Our second application of Theorem \ref{thm:abs-cont} allows us to establish a connection between the absolute continuity properties of the elliptic measures of an operator, its adjoint and/or its symmetric part. Given $Lu=-\div(A \nabla u)$ a real (not necessarily 
symmetric) elliptic operator, we let $L^{\top}$ denote the transpose of $L$, and let $L^{\rm sym}=\frac{L+L^{\top}}{2}$ 
be the symmetric part of $L$. These are respectively the divergence form elliptic operators with associated matrices $A^\top$ (the transpose of $A$)  and $A^{\rm sym}=\frac{A+A^{\top}}{2}$. In this case, the following result is a qualitative version of \cite[Theorem 1.6]{CHMT}.

\begin{theorem}\label{thm:wL-wLT}
	Let $\Omega \subset \R^{n+1}$, $n\ge 2$, be a $1$-sided CAD and write  $\sigma:=\H^n |_{\partial \Omega}$.  There exists $\alpha_0>0$ (depending only on the 
	$1$-sided CAD constants) such that if $Lu=-\div(A \nabla u)$ is a real (not necessarily 
	symmetric) elliptic operator, and we assume that $(A-A^{\top}) \in \Lip_{\loc}(\Omega)$ and that for some $\alpha\ge \alpha_0$ and 
	for some $r>0$ one has 
	\begin{equation}\label{eq:divCAA}
	\iint_{\Gamma_{\alpha}^{r}(x)} 
	\left|\div_C(A-A^{\top})(X)\right|^2 \delta(X)^{1-n} dX<\infty, 
	\quad \sigma \text{-a.e. } x \in \partial \Omega, 
	\end{equation} 
	where 
	\begin{align*}
	\div_C(A-A^{\top})(X)=\bigg(\sum_{i=1}^{n+1} \partial_i(a_{i,j}-a_{j,i})(X) \bigg)_{1 \leq j \leq n+1}, 
	\quad X \in \Omega,  
	\end{align*} 
	then $\sigma \ll \w_L$ if and only if $\sigma \ll \w_{L^{\top}}$ if and only if $\sigma \ll \w_{L^{\rm sym}}$. 
\end{theorem}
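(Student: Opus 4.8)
The plan is to deduce Theorem~\ref{thm:wL-wLT} from Theorem~\ref{thm:abs-cont} by exhibiting, for each of the operators $L^{\top}$ and $L^{\rm sym}$, a quantitative comparison of the truncated conical square functions of their bounded null solutions with those of $L$, so that condition \eqref{list:Sr} transfers across. Concretely, fix $\alpha\ge\alpha_0$ (enlarging $\alpha_0$ if necessary so that all the cones involved are nonempty) and the radius $r>0$ for which \eqref{eq:divCAA} holds. Suppose $\sigma\ll\w_L$; by Theorem~\ref{thm:abs-cont} applied to $L$, this is equivalent to $S_{\alpha}^{r}u<\infty$ $\sigma$-a.e.\ for every bounded weak solution $u$ of $Lu=0$. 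I want to show the analogous statement for $L^{\top}$ and for $L^{\rm sym}$, which by Theorem~\ref{thm:abs-cont} (now applied to those operators) yields $\sigma\ll\w_{L^{\top}}$ and $\sigma\ll\w_{L^{\rm sym}}$; the reverse implications are symmetric since $(L^{\top})^{\top}=L$ and one can run the same argument with the roles interchanged, the hypothesis \eqref{eq:divCAA} being invariant under $A\mapsto A^{\top}$ (it only changes the sign of $\div_C(A-A^{\top})$) and, for the symmetric part, under $A\mapsto A^{\rm sym}$ because $A-(A^{\rm sym})^{\top}=A-A^{\rm sym}=\tfrac12(A-A^{\top})$.

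The key algebraic observation is the standard one underlying square-function perturbation results of this type (as in the proof of \cite[Theorem~1.6]{CHMT}): if $v$ is a bounded weak solution of $L^{\top}v=0$ then $v$ is a bounded weak solution of the perturbed equation $-\div\big((A + (A^{\top}-A))\nabla v\big)=0$, i.e.\ $-\div(A\nabla v) = -\div\big((A-A^{\top})\nabla v\big) = -\div_C(A-A^{\top})\cdot\nabla v$ in the sense of distributions, using $(A-A^{\top})\in\Lip_{\loc}(\Omega)$ and the antisymmetry to rewrite the divergence of the antisymmetric part acting on $\nabla v$ as a first-order term with the divergence-of-columns vector $\div_C(A-A^{\top})$. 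Thus $v$ solves $Lv = f$ with $f=-\div_C(A-A^{\top})\cdot\nabla v\in L^2_{\loc}(\Omega)$. One then uses the interior Caccioppoli-type / Moser estimates for $L$ together with the Green representation for the inhomogeneous problem on Whitney balls to bound $|\nabla v(Y)|$ by a local average of $|\nabla v|$ plus a term controlled by $\delta\,\varrho$-type quantities times $|\div_C(A-A^{\top})|$; integrating $|\nabla v(Y)|^2\delta(Y)^{1-n}$ over $\Gamma_{\alpha}^{r}(x)$ and applying Fubini to the bad term produces exactly an integral of the form $\iint_{\Gamma_{\alpha'}^{r'}(x)}|\div_C(A-A^{\top})(X)|^2\delta(X)^{1-n}\,dX$ (with a slightly larger aperture and radius, which is harmless by the aperture-monotonicity remark and a standard covering), which is finite $\sigma$-a.e.\ by \eqref{eq:divCAA}. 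Combining with the $L$-square-function bound for the solution part, one gets $S_{\alpha}^{r}v(x)<\infty$ for $\sigma$-a.e.\ $x$. The case $L^{\rm sym}$ is identical with $A-A^{\top}$ replaced throughout by $A-A^{\rm sym}=\tfrac12(A-A^{\top})$, so \eqref{eq:divCAA} again provides exactly the needed finiteness.

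I would organize the write-up as: (i) reduce, via Theorem~\ref{thm:abs-cont} and the symmetry of the hypothesis, to proving the single implication ``$S^{r}_{\alpha}u<\infty$ $\sigma$-a.e.\ for all bounded $L$-solutions $\Rightarrow$ $S^{r}_{\alpha}v<\infty$ $\sigma$-a.e.\ for all bounded $L^{\top}$-solutions'' (and the same with $L^{\rm sym}$); (ii) establish the distributional identity $Lv=-\div_C(A-A^{\top})\cdot\nabla v$ for bounded $L^{\top}$-solutions $v$, carefully justifying the integration by parts using $(A-A^{\top})\in\Lip_{\loc}$; (iii) a local solvability/representation step on Whitney regions expressing $\nabla v$ as (gradient of an $L$-harmonic piece) $+$ (gradient of a Newtonian-potential-type correction built from the right-hand side), with the $L^{\infty}$ bound on $v$ feeding the $L$-solution piece; (iv) the Fubini/square-function bookkeeping that turns the correction term into a multiple of the left-hand side of \eqref{eq:divCAA} with comparable parameters. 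The main obstacle I anticipate is step (iii)–(iv): controlling the gradient of the solution to the inhomogeneous equation by the conical integral of the data in a way that only worsens the aperture and truncation radius by bounded amounts, i.e.\ getting clean pointwise-in-$x$ (not merely $L^2(\sigma)$) estimates; this requires the interior estimates for $L$ on Whitney balls and a careful geometric summation over Whitney cubes meeting $\Gamma_{\alpha}^{r}(x)$, much as in \cite{CHMT} but now at the qualitative, a.e.-finiteness level rather than the Carleson-measure level, where one must be slightly more careful about which averaged quantities are finite a.e.
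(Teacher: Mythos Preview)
Your outline diverges from the paper's argument and, as written, has a real gap. The paper does not compare the square function of a bounded $L^{\top}$-solution $v$ to that of some bounded $L$-solution; instead it invokes the general perturbation result Theorem~\ref{thm:AAAD} with $A_0=A$, $A_1=A^{\top}$, $\widetilde A=0$, $D=A-A^{\top}$ (respectively $A_1=A^{\rm sym}$, $D=\tfrac12(A-A^{\top})$). The proof of Theorem~\ref{thm:AAAD} uses $\sigma\ll\w_{L_0}$ not through the square-function characterization \eqref{list:Sr} of Theorem~\ref{thm:abs-cont}, but through the Green-function comparability \eqref{eq:w-sig-com}: after the stopping-time construction \eqref{eq:stopping} one has $\G_{L_0}\approx\delta$ on the sawtooth $\Omega^{\vartheta}_{\widehat\F_N,Q_0}$, and then the integration-by-parts estimate of \cite[Proposition~4.18]{CHMT} bounds $\iint_{\Omega^{\vartheta}_{\widehat\F_N,Q_0}}|\nabla u|^2\,\G_{L_0}$ for an $L_1$-solution $u$ directly by $\sigma(Q_0)$ plus the $a(X)$- and $\div_C D$-integrals. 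No $L_0$-solution is ever produced from $u$.

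The gap in your plan is step~(iii). A Whitney-local splitting of $v$ into an $L$-harmonic piece $u_I$ plus a potential correction does not yield a \emph{global} bounded $L$-solution to which the hypothesis ``$S^{r}_{\alpha}u<\infty$ $\sigma$-a.e.\ for every bounded $L$-solution $u$'' could be applied; and feeding only the $L^{\infty}$ bound of $v$ into $u_I$ via Caccioppoli gives $|\nabla u_I|\lesssim\|v\|_{L^\infty}\delta^{-1}$ on each Whitney cube, which does not sum to a finite conical square function. Worse, the inhomogeneity you obtain is $f=-\div_C(A-A^{\top})\cdot\nabla v$, so any bound on the correction term still carries a factor of $|\nabla v|$; your step~(iv) asserts that the correction contributes exactly $\iint_{\Gamma}|\div_C(A-A^{\top})|^2\delta^{1-n}$, but in fact it contributes an expression of the form $\iint_{\Gamma}|\div_C(A-A^{\top})|^2|\nabla v|^2\,\delta^{3-n}$ (or similar), which is circular. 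In the quantitative setting of \cite{CHMT} this circularity is resolved by an absorption argument (Young's inequality against a finite quantity), which is exactly what the $\G_{L_0}$-weighted integration by parts provides; the purely qualitative hypothesis \eqref{eq:divCAA} gives no smallness and no a~priori finiteness to absorb against. The remedy is to follow the paper: use $\sigma\ll\w_{L}$ to get $\G_L\approx\delta$ on good sawtooths, and then run the $\G_L$-weighted energy estimate from \cite[Proposition~4.18]{CHMT} on the $L^{\top}$- (or $L^{\rm sym}$-) solution.
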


\medskip

The paper is organized as follows. In Section \ref{sec:pre}, we present some preliminaries, definitions, and some background results that will be used throughout the paper. Section \ref{sec:proof-abs} is devoted to showing Theorem \ref{thm:abs-cont}. Finally, in Section \ref{sec:perturbation},  applying Theorem \ref{thm:abs-cont} $\eqref{list:wL} \Leftrightarrow \eqref{list:Sr}$, we obtain a more general  perturbation result about the absolute continuity of surface measure with respect to elliptic measure and then prove Theorems \ref{thm:perturbation} and \ref{thm:wL-wLT}.

\section{Preliminaries}\label{sec:pre}

\subsection{Notation and conventions}

\begin{list}{$\bullet$}{\leftmargin=0.4cm  \itemsep=0.2cm}
	
	\item  Our ambient space is $\re^{n+1}$, $n\ge 2$.
	
	\item We use the letters $c$, $C$ to denote harmless positive constants, not necessarily the same at each occurrence, which depend only on dimension and the constants appearing in the hypotheses of the theorems (which we refer to as the ``allowable parameters''). We shall also sometimes write $a\lesssim b$ and $a\approx b$ to mean, respectively, that $a\leq C b$ and $0<c\leq a/b\leq C$, where the constants $c$ and $C$ are as above, unless explicitly noted to the contrary. Moreover, if $c$ and $C$ depend on some given parameter $\eta$, which is somehow relevant, we write $a\lesssim_\eta b$ and $a\approx_\eta b$. At times, we shall designate by $M$ a particular constant whose value will remain unchanged throughout the proof of a given lemma or proposition, but which may have a different value during the proof of a different lemma or proposition.
	
	\item Given $E \subset \R^{n+1}$ we write $\diam(E)=\sup_{x, y \in E}|x-y|$ to denote its diameter.

	\item Given a domain (i.e., open and connected) $\Omega\subset\re^{n+1}$, we shall use lower case letters $x,y,z$, etc., to denote points on $\partial\Omega$, and capital letters $X,Y,Z$, etc., to denote generic points in $\re^{n+1}$ (especially those in $\Omega$).
	
	
	\item The open $(n+1)$-dimensional Euclidean ball of radius $r$ will be denoted $B(x,r)$ when the center $x$ lies on $\partial\Omega$, or $B(X,r)$ when the center $X\in\re^{n+1}\setminus \partial\Omega$. A ``surface ball'' is denoted $\Delta(x,r):=B(x,r)\cap \partial\Omega$, and unless otherwise specified it is implicitly assumed that $x\in\partial\Omega$. Also if $\partial\Omega$ is bounded, we typically assume that $0<r\lesssim\diam(\partial\Omega)$, so that $\Delta=\partial\Omega$ if $\diam(\partial\Omega)<r\lesssim\diam(\partial\Omega)$.

	\item Given a Euclidean ball $B$ or surface ball $\Delta$, its radius will be denoted $r_B$ or $r_\Delta$
	respectively.
	
	\item Given a Euclidean ball $B=B(X,r)$ or a surface ball $\Delta=\Delta(x,r)$, its concentric dilate by a factor of $\kappa>0$ will be denoted by $\kappa B=B(X,\kappa r)$ or $\kappa\Delta=\Delta(x,\kappa r)$.
	
	\item For $X\in\re^{n+1}$, we set $\delta(X):=\dist(X,\partial\Omega)$. 
	
	\item We let $\H^n$ denote the $n$-dimensional Hausdorff measure, and let $\sigma:=\H^n |_{\partial \Omega}$ denote the surface measure on $\partial \Omega$. 
	
%

	\item For a Borel set $A\subset\re^{n+1}$, we let $\inter(A)$ denote the interior of $A$, and $\overline{A}$ denote the closure of $A$. If $A\subset \partial\Omega$, $\inter(A)$ will denote the relative interior, i.e., the largest relatively open set in $\partial\Omega$ contained in $A$. Thus, for $A\subset \partial\Omega$, the boundary is then well defined by $\partial A:=\overline{A}\setminus\inter(A)$.
	
	\item For a Borel set $A\subset \partial\Omega$ with $0<\sigma(A)<\infty$, we write $\fint_{A}f\,d\sigma:=\sigma(A)^{-1}\int_A f\,d\sigma$.
	
	\item We shall use the letter $I$ (and sometimes $J$) to denote a closed $(n+1)$-dimensional Euclidean cube with 
	sides parallel to the coordinate axes, and we let $\ell(I)$ denote the side length of $I$. We use $Q$ to denote a dyadic ``cube'' on $\partial \Omega$. The latter exist, given that $\partial \Omega$ is ADR (see \cite{DS1}, \cite{Ch}, and enjoy certain properties which we enumerate in Lemma \ref{lem:dyadic} below).
	
\end{list}
\medskip

\subsection{Some definitions}

\begin{definition}[Ahlfors-David regular]  
We say that a closed set $E \subset \R^{n+1}$ is $n$-dimensional Ahlfors-David regular (or simply ADR) 
if there is some uniform constant $C\ge 1$ such that
\begin{equation*}
C^{-1} r^n \leq \H^n(E \cap B(x, r)) \leq C r^n, \qquad \forall\,x \in E, \ r \in (0, 2\,\diam(E)). 
\end{equation*} 
\end{definition} 
%

\begin{definition}[Corkscrew condition]\label{def:CKS} 
We say that an open set $\Omega \subset \R^{n+1}$ satisfies the Corkscrew condition if for some uniform 
constant $c \in (0, 1)$, and for every surface ball $\Delta:=\Delta(x, r)$ with $x \in \partial \Omega$ and 
$0< r < \diam(\partial \Omega)$, there is a ball $B(X_{\Delta}, cr) \subset B(x, r) \cap \Omega$. The point 
$X_{\Delta} \in \Omega$ is called a ``Corkscrew point'' relative to $\Delta$. We note that we may allow 
$r < C \diam(\partial \Omega)$ for any fixed $C$, simply by adjusting the constant $c$. 
\end{definition} 


\begin{definition}[Harnack Chain condition] 
We say that an open set $\Omega$ satisfies the Harnack Chain condition if there is a uniform constant $C$ such that for every 
$\rho>0$, $\Lambda \geq 1$, and every pair of points $X, X' \in \Omega$ with $\min\{\delta(X), \delta(X')\} \geq \rho$ 
and $|X-X'| < \Lambda \rho$, there is a chain of open balls $B_1, \ldots, B_N \subset \Omega$, $N \leq C(\Lambda)$, 
with $X \in B_1$, $X' \in B_N$, $B_k \cap B_{k+1} \neq \emptyset$, 
$C^{-1} \diam(B_k) \leq \dist(B_k, \partial \Omega) \leq C \diam(B_k)$. Such a sequence is called a ``Harnack Chain''. 
\end{definition} 

We remark that the Corkscrew condition is a quantitative, scale invariant version of openness,  and the 
Harnack Chain condition is a scale invariant version of path connectedness.

\begin{definition}[$1$-sided NTA domains, $1$-sided CAD, NTA domains, CAD] \label{defi:NTA}
We say that $\Omega$ is a $1$-sided NTA (non-tangentially accessible) domain if  $\Omega$ satisfies both the Corkscrew and 
Harnack Chain conditions. Furthermore, we say that $\Omega$ is an NTA domain if it is a $1$-sided NTA 
domain and if, in addition, $\R^{n+1} \setminus \overline{\Omega}$ satisfies the Corkscrew condition. 
If a $1$-sided NTA domain, or an NTA domain, has an ADR boundary, then it is called a 1-sided CAD (chord-arc domain) or a CAD, respectively.

\end{definition} 

\subsection{Dyadic grids and sawtooths}\label{section:dyadic}
We give a lemma concerning the existence of a ``dyadic grid'', which was proved in \cite{DS1, DS2, Ch}.  

\begin{lemma}\label{lem:dyadic}
Suppose that $E \subset \R^{n+1}$ is an $n$-dimensional $ADR$ set. Then there exist constants
$a_0>0$, $\gamma>0$, and $C_1 < 1$ depending only on $n$ and the $ADR$ constant 
such that, for each $k \in \Z$, there is a collection of Borel sets (cubes)
\begin{align*}
\D_k =\{Q_j^k \subset E: j \in \mathfrak{J}_k\}
\end{align*}
where $\mathfrak{J}_k$ denotes some (possibly finite) index set depending on $k$, satisfying: 
\begin{list}{$(\theenumi)$}{\usecounter{enumi}\leftmargin=1cm \labelwidth=1cm \itemsep=0.1cm \topsep=.2cm \renewcommand{\theenumi}{\alph{enumi}}}

\item $E=\bigcup_j Q_j^k$, for each $k \in \Z$. 
\item If $m \geq k$, then either $Q_i^m \subset Q_j^k$ or $Q_i^m \cap Q_j^k = \emptyset$. 
\item For each $(j, k)$ and each $m<k$, there is a unique $i$ such that $Q_j^k \subset Q_i^m$. 
\item $\diam(Q_j^k) \leq C_1 2^k$.  
\item Each $Q_j^k$ contains some surface ball $\Delta(x_j^k, a_0 2^{-k}):=B(x_j^k, a_0 2^{-k}) \cap E$.
\item $\H^n(\{x \in Q_j^k: \dist(x, E \backslash Q_j^k) \leq 2^{-k} a\}) \leq C_1 a^{\gamma} \H^n(Q_j^k)$ 
for all $k, j$ and $a \in (0, a_0)$.
\end{list}
\end{lemma}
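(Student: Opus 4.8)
The statement to prove is the classical existence of a Christ–David dyadic grid on an $n$-dimensional ADR set $E\subset\R^{n+1}$; since this is exactly the lemma proved in \cite{DS1, DS2, Ch}, I would prove it by reproducing Christ's construction adapted to the ADR setting. The starting point is to fix, for each $k\in\Z$, a maximal collection of points $\{x_j^k\}_{j\in\mathfrak{J}_k}\subset E$ that is $2^{-k}$-separated, i.e.\ $|x_i^k-x_j^k|\ge 2^{-k}$ for $i\ne j$, and maximal with respect to this property. Maximality gives that the balls $\{\Delta(x_j^k,2^{-k})\}_j$ cover $E$, while separation gives that the balls $\{\Delta(x_j^k,2^{-k}/2)\}_j$ are pairwise disjoint; combined with the ADR bound this yields the usual bounded-overlap (doubling-type) control on the families of dilated balls, which is the workhorse estimate for everything that follows. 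I would then organize the centers into a tree: for each $k$ choose (by the axiom of choice) for every $x_j^{k+1}$ a ``parent'' $x_{i}^{k}$ with $|x_j^{k+1}-x_i^k|< 2^{-k}$ (such exists by maximality at level $k$), inductively defining ancestors at all coarser levels, and write $x_j^{k+1}\preceq x_i^k$ when $x_i^k$ is an ancestor of $x_j^{k+1}$.

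\textbf{Main construction and the hard step.} With the tree fixed I would define the cubes. The first attempt is the naive one, $\widetilde Q_j^k:=\bigcup\{\Delta(x_i^m, c\,2^{-m}): x_i^m\preceq x_j^k,\ m\ge k\}$ for a small dimensional constant $c$, so that property (b) (nestedness/disjointness across generations) and property (c) (unique parent) hold by construction, and property (a) ($E=\bigcup_j Q_j^k$) holds because the level-$m$ balls cover $E$ for every $m$. The genuinely hard point — and the heart of Christ's argument — is the \emph{small boundary property} (f): one must show the ``annular'' region of $Q_j^k$ within distance $2^{-k}a$ of $E\setminus Q_j^k$ has $\H^n$-measure $\le C_1 a^\gamma \H^n(Q_j^k)$. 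The plan here is to prove a quantitative statement, uniform in scales, that the portion of $E\cap B(x_j^k, C2^{-k})$ lying near the ``$1/2$-level boundary'' of a cube is geometrically small: at each generation $m\ge k$, the centers $x_i^m\preceq x_j^k$ whose balls touch the exterior of $Q_j^k$ all lie within a controlled $O(2^{-m})$-neighborhood of $\partial(Q_j^k)$, and by the ADR upper and lower bounds plus the bounded-overlap estimate the number of such centers is at most a fixed fraction (raised to a power proportional to $m-k$) of the total number of level-$m$ centers below $x_j^k$. Summing the geometric series in $m$ converts this into the claimed $a^\gamma$ decay. This step requires care because the naive cubes need not be open or closed and their boundaries can be irregular; the standard fix, which I would follow, is to define the final cubes $Q_j^k$ by redistributing the ``ambiguous'' small-boundary pieces — assign each such piece to exactly one of the competing cubes by a fixed rule (e.g.\ smallest index) — which preserves (a)–(c) and, thanks to the smallness just proved, does not destroy (f).

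\textbf{Remaining properties and bookkeeping.} Property (d), $\diam(Q_j^k)\le C_1 2^k$, follows by summing a geometric series: any point of $Q_j^k$ lies in some $\Delta(x_i^m, c2^{-m})$ with $x_i^m\preceq x_j^k$, hence within $\sum_{m\ge k} (2^{-m}+c2^{-m}) \lesssim 2^{-k}$ of $x_j^k$ — here I should be mindful of the normalization; the lemma as stated writes $2^k$ where a dilation-by-constant argument naturally gives $C2^{-k}$, so I would absorb the convention into $C_1$ (equivalently, reindex $k\mapsto -k$) and note this explicitly. Property (e), that $Q_j^k$ contains a surface ball $\Delta(x_j^k, a_0 2^{-k})$, is obtained by choosing $a_0$ small relative to $c$ and the separation constant so that the level-$k$ center's own ball, minus everything redistributed to neighbors, still contains a definite inner ball — this uses that competing centers $x_i^k$ are $2^{-k}$-separated from $x_j^k$, so their cubes cannot reach deep into $\Delta(x_j^k, a_0 2^{-k})$. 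Finally one checks the constants $a_0,\gamma,C_1$ depend only on $n$ and the ADR constant by tracking that every estimate used only those; and since the construction is carried out for all $k\in\Z$ simultaneously with a coherent parent map, the grid $\D=\bigcup_k \D_k$ is consistent. The only subtlety worth flagging is that the indexing set $\mathfrak{J}_k$ may be finite when $E$ is bounded and $2^{-k}$ exceeds $\diam(E)$ (then $\D_k=\{E\}$), which the statement already permits.
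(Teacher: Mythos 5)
The paper does not prove this lemma; it states it with attribution to David--Semmes \cite{DS1,DS2} and Christ \cite{Ch}, and points to \cite{HMMM} for the normalization $\delta=1/2$. You correctly identify the result and sketch Christ's construction, which is the right reference. Two inaccuracies in the sketch are worth flagging, though. First, for the naive cubes $\widetilde Q_j^k := \bigcup\{\Delta(x_i^m, c\,2^{-m}) : x_i^m\preceq x_j^k,\ m\ge k\}$ with $c$ small, properties (a) and (b) do \emph{not} hold by construction as you assert: maximality of the separated set gives that the balls of radius $2^{-m}$ cover $E$, not those of radius $c\,2^{-m}$, so $\bigcup_j\widetilde Q_j^k$ need not equal $E$; and pairwise disjointness at a fixed generation fails because balls attached to descendants of distinct level-$k$ centers can overlap. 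Producing a genuine partition satisfying (a) and (b) is exactly what the redistribution/modification step must accomplish, not a feature one starts from. Second, the small-boundary estimate (f) is the entire technical content of the lemma and cannot be dispatched by the one-line assertion that near-boundary centers form ``a fixed fraction raised to a power proportional to $m-k$'': that exponential decay is proved by Christ's iterative thinning argument (equivalently, the David--Semmes stopping-time/corona argument), which uses the doubling/ADR property in a specific way and would need to be reproduced in full. You did correctly catch the typo in (d), where $2^k$ should read $2^{-k}$. Since the paper itself simply cites this result, a complete reproduction of the construction is arguably more than is expected here, but the two points above would have to be repaired before the sketch could be called a proof.
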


A few remarks are in order concerning this lemma. 
\begin{enumerate}
\item[$\bullet$] In the setting of a general space of homogeneous type, this lemma has been proved by Christ \cite{Ch}, 
with the dyadic parameter $1/2$ replaced by some constant $\delta \in (0, 1)$. In fact, one may always take $\delta=1/2$ 
(cf. \cite[Proof of Proposition 2.12]{HMMM}). In the presence of the Ahlfors-David property, the result already appears in 
\cite{DS1, DS2}. 
\item[$\bullet$] For our purposes, we may ignore those $k \in \Z$ such that $2^{-k} \gtrsim \diam(E)$, in the case that 
the latter is finite.
\item[$\bullet$] We shall denote by $\D(E)$ the collection of all relevant $Q^k_j$, i.e., 
\begin{align*}
\D(E) := \bigcup_{k \in \Z} \D_k,
\end{align*}
where, if $\diam(E)$ is finite, the union runs over those $k$ such that $2^{-k} \lesssim \diam(E)$.  
\item[$\bullet$] For a dyadic cube $Q \in \D_k$, we shall set $\ell(Q)=2^{-k}$, and we shall refer to this quantity as the 
``length'' of $Q$. Evidently, $\ell(Q) \simeq \diam(Q)$. We set $k(Q)=k$ to be the dyadic generation to which $Q$ 
belongs if $Q \in \D_k$; thus, $\ell(Q)=2^{-k(Q)}$. 
\item[$\bullet$] Properties $(d)$ and $(e)$ imply that for each cube $Q \in \D_k$, there is a point $x_Q \in E$, a Euclidean ball 
$B(x_Q, r_Q)$ and a surface ball $\Delta(x_Q, r_Q) := B(x_Q, r_Q) \cap E$ such that $c\ell(Q) \leq r_Q \leq \ell(Q)$, for some uniform constant $c > 0$, and
\begin{align}\label{eq:Q-DQ} 
\Delta(x_Q, 2r_Q) \subset Q \subset \Delta(x_Q, Cr_Q), 
\end{align}
for some uniform constant $C>1$. We shall write 
\begin{align}\label{eq:BQ} 
B_Q := B(x_Q, r_Q),\quad \Delta_Q := \Delta(x_Q, r_Q), 
\quad \widetilde{\Delta}_Q := \Delta(x_Q, Cr_Q), 
\end{align}
and we shall refer to the point $x_Q$ as the ``center'' of $Q$. 
\item[$\bullet$] Let $\Omega \subset \R^{n+1}$ be an open set satisfying the corkscrew condition and such that 
$\partial \Omega$ is ADR. Given $Q \in \D(\partial \Omega)$, we define the ``corkscrew point relative to $Q$'' as 
$X_Q:=X_{\Delta_Q}$. We note that 
\[ \delta(X_Q) \simeq \dist(X_Q, Q) \simeq \diam(Q). \] 
\end{enumerate}

We next introduce the notation of ``Carleson region'' and ``discretized sawtooth'' from \cite[Section 3]{HM}. 
Given a dyadic cube $Q \in \D(E)$, the ``discretized Carleson region'' $\D_Q$ relative to $Q$ is defined by 
\[ \D_Q := \{Q' \in \D(E) : Q' \subset Q\}. \] 
Let $\F=\{Q_j\} \subset \D(E)$ be a pairwise family of disjoint cubes. The ``global discretized sawtooth'' relative to $\F$ 
is the collection of cubes $Q \in \D(E)$ that are not contained in any $Q_j \in \F$, that is, 
\begin{equation*}
\D_{\F} :=\D(E) \setminus \bigcup_{Q_j \in \F} \D_{Q_j}.  
\end{equation*}
For a given cube $Q \in \D(E)$, we define the ``local discretized sawtooth'' relative to $\F$ is the collection of cubes in 
$\D_Q$ that are not contained in any $Q_j \in \F$ of, equivalently, 
\begin{equation*}
\D_{\F, Q} :=\D_Q \setminus \bigcup_{Q_j \in \F} \D_{Q_j}=\D_{\F} \cap \D_Q. 
\end{equation*} 

We also introduce the ``geometric'' Carleson regions and sawtooths. In the sequel, $\Omega \subset \R^{n+1}$,  
$n \geq 2$, is a $1$-sided CAD. Given $Q \in \D:=\D(\partial \Omega)$ we want to define some associated regions 
which inherit the good properties of $\Omega$. Let $\W=\W(\Omega)$ denote a collection of (closed) dyadic Whitney cubes 
of $\Omega$, so that the cubes in $\W$ form a covering of $\Omega$ with non-overlapping interiors, which satisfy 
\begin{equation*}
4 \diam(I) \leq \dist(4I, \partial \Omega) \leq \dist(I, \partial \Omega) \leq 40 \diam(I), \quad \forall\,I \in \W, 
\end{equation*}
and also 
\begin{equation*}
(1/4) \diam(I_1) \leq \diam(I_2) \leq 4 \diam(I_1), \quad\text{whenever $I_1$ and $I_2$ touch}.
\end{equation*}
Let $X(I)$ be the center of $I$ and $\ell(I)$ denote the sidelength of $I$. 

Given $0<\lambda<1$ and $I \in \W$, we write $I^*=(1+\lambda)I$ for the ``fattening'' of $I$. 
By taking $\lambda$ small enough, we can arrange matters, so that for any $I, J \in \W$, 
\begin{equation*}
\begin{aligned} 
\dist(I^*, J^*) & \simeq \dist(I, J), \\ 
\operatorname{int}(I^*) \cap \operatorname{int}(J^*) \neq \emptyset & 
\Longleftrightarrow \partial I \cap \partial J \neq \emptyset.  
\end{aligned}
\end{equation*}
(The fattening thus ensures overlap of $I^*$ and $J^*$ for any pair $I, J \in \W$ whose boundaries touch, so that the 
Harnack chain property then holds locally, with constants depending upon $\lambda$, in $I^* \cap J^*$.) By choosing 
$\lambda$ sufficiently small, say $0<\lambda<\lambda_0$, we may also suppose that there is a $\tau \in (1/2, 1)$ such 
that for distinct $I, J \in \W$, we have that $\tau J \cap I^{*}=\emptyset$.  
In what follows we will need to work with the dilations $I^{**}=(1+2\lambda)I$ or $I^{***}=(1+4\lambda)I$, and in 
order to ensure that the same properties hold we further assume that $0<\lambda<\lambda_0/4$.

Given $\vartheta\in\mathbb{N}$,  for every cube $Q \in \D$ we set  
\begin{equation}\label{eq:WQ}
\W_Q^\vartheta :=\left\{I \in \W: 2^{-\vartheta}\ell(Q) \leq \ell(I) \leq 2^\vartheta\ell(Q), \text { and } \dist(I, Q) \leq 2^\vartheta \ell(Q) \right\}.
\end{equation}
We will choose $\vartheta\ge \vartheta_0$,  with $\vartheta_0$ large enough depending on the constants of the corkscrew condition (cf. Definition  \ref{def:CKS}) and in the dyadic cube construction
(cf. Lemma \ref{lem:dyadic}), so that $X_Q \in I$ for some $I \in \W_Q^\vartheta$, and for each dyadic child $Q^j$ of $Q$, the respective corkscrew points 
$X_{Q^j} \in I^j$ for some $I^j  \in \W_Q^\vartheta$. Moreover, we may always find an $I \in \W_Q^\vartheta$ with the slightly more precise 
property that $\ell(Q)/2 \leq \ell(I) \leq \ell(Q)$ and 
\begin{equation*}
\W_{Q_1}^\vartheta \cap \W_{Q_2}^\vartheta \neq \emptyset, \quad \text { whenever } 
1 \leq \frac{\ell(Q_2)}{\ell(Q_1)} \leq 2, \text { and } \dist(Q_1, Q_2) \leq 1000 \ell(Q_2). 
\end{equation*}

 For each $I \in \W_Q^\vartheta$, we form a Harnack chain from the center $X(I)$ to the corkscrew point $X_Q$ and call it $H(I)$. 
We now let $\W_{Q}^{\vartheta, *}$ denote the collection of all Whitney cubes which meet at least one ball in the Harnack chain $H(I)$ 
with $I \in \W_Q$, that is, 
\begin{equation*}
\W_{Q}^{\vartheta, *}:=\{J \in \W: \text{ there exists } I \in \W_Q \text{ such that } H(I) \cap J \neq \emptyset\}. 
	\end{equation*}
We also define 
\begin{equation*}
U_{Q}^\vartheta :=\bigcup_{I \in \W_{Q}^{\vartheta, *}}(1+\lambda) I=: \bigcup_{I \in \W_{Q}^{\vartheta, *}} I^{*}. 
\end{equation*}
By construction, we then have that 
\begin{equation*}
\W_{Q}^\vartheta  \subset \W_{Q}^{\vartheta, *} \subset \W \quad \text{and}\quad X_Q \in U_Q^{\vartheta}, \quad X_{Q^{j}} \in U_{Q}^{\vartheta}, 
\end{equation*} 
for each child $Q^j$ of $Q$. It is also clear that there is a uniform constant $k^*$ (depending only on the 
$1$-sided CAD constants and $\vartheta$) such that 
\begin{align*}
2^{-k^*} \ell(Q) \leq \ell(I) \leq 2^{k^*}\ell(Q), &\quad \forall\,I \in \W_{Q}^{\vartheta, *},  
\\
X(I) \rightarrow_{U_Q^\vartheta} X_Q, &\quad \forall\,I \in \W_{Q}^{\vartheta, *},
\\
\dist(I, Q) \leq 2^{k^*} \ell(Q), &\quad \forall\,I \in \W_{Q}^{\vartheta, *}. 
\end{align*} 
Here, $X(I) \to_{U_Q^\vartheta} X_Q$ means that the interior of $U_Q^\vartheta$ contains all balls in Harnack Chain (in $\Omega$) connecting $X(I)$ to $X_Q$, and moreover, for any point $Z$ contained in any ball in the Harnack Chain, we have 
$\dist(Z, \partial \Omega) \simeq \dist(Z, \Omega \setminus U_Q^\vartheta)$ with uniform control of implicit constants. The 
constant $k^*$ and the implicit constants in the condition $X(I) \to_{U_Q^\vartheta} X_Q$, depend on at most allowable 
parameter, on $\lambda$, and on $\vartheta$. Moreover, given $I \in \W$ we have that $I \in \W^{\vartheta,*}_{Q_I}$, where $Q_I \in \D(\partial \Omega)$ satisfies $\ell(Q_I)=\ell(I)$, and contains any fixed $\widehat{y} \in \partial \Omega$ such that 
$\dist(I, \partial \Omega)=\dist(I, \widehat{y})$. The reader is referred to \cite{HM} for full details. We note however that in that reference the parameter $\vartheta$ is fixed. Here we need to allow $\vartheta$ to depend on the aperture of the cones and hence it is convenient to include the superindex $\vartheta$.

For a given $Q \in \D$, the ``Carleson box'' relative to $Q$ is defined by  
\begin{equation*}
T_{Q}^\vartheta :=\operatorname{int}\bigg(\bigcup_{Q' \in \D_Q} U_{Q'}^\vartheta\bigg). 
\end{equation*}  
For a given family $\F=\{Q_j\}$ of pairwise disjoint cubes and a given $Q \in \D(\partial \Omega)$, we define the 
``local sawtooth region'' relative to $\F$ by 
\begin{equation*}
\Omega_{\F, Q}^\vartheta :=\inter\bigg(\bigcup_{Q' \in \D_{\F, Q}} U_{Q'}^\vartheta\bigg)
=\inter\bigg(\bigcup_{I \in \W_{\F, Q}^\vartheta} I^* \bigg), 
\end{equation*}
where $\W_{\F, Q}^\vartheta:=\bigcup_{Q' \in \D_{\F, Q}} \W^{\vartheta,*}_Q$. Analogously, we can slightly fatten the Whitney boxes and use 
$I^{**}$ to define new fattened Whitney regions and sawtooth domains. More precisely, for every $Q \in \D(\partial \Omega)$,
\begin{equation*}
T^{\vartheta,*}_Q :=\inter\bigg(\bigcup_{Q' \in \D_Q} U_{Q'}^\vartheta\bigg), \quad 
\Omega^{\vartheta,*}_{\F, Q} :=\inter\bigg(\bigcup_{Q' \in \D_{\F, Q}} U^{\vartheta,*}_{Q'}\bigg), \quad 
U^{\vartheta,*}_Q :=\bigcup_{I \in \W^{\vartheta,*}_Q} I^{**}.  
\end{equation*}
Similarly, we can define $T^{\vartheta,**}_Q$, $\Omega^{\vartheta,**}_{\F, Q}$ and $U^{\vartheta,**}_Q$ by using $I^{***}$ in place of $I^{**}$. 
For later use, we recall that \cite[Proposition 6.1]{HM}: 
\begin{equation}\label{eq:boundary}
Q \backslash \bigg(\bigcup_{Q_j \in \F} Q_j\bigg) 
\subset \partial \Omega \cap \partial \Omega_{\F, Q}^\vartheta 
\subset \overline{Q} \backslash \bigg(\bigcup_{Q_j \in \F} \inter(Q_j)\bigg).  
\end{equation}

Following \cite{HM}, one can easily see that there exist constants $0<\kappa_1<1$ and 
$\kappa_0 \geq \max\{2C, 4/c\}$ (with $C$ the constant in \eqref{eq:Q-DQ}, and $c$ such that 
$c \ell(Q) \leq r_Q$), depending only on the allowable parameters and on $\vartheta$, so that
\begin{align}\label{eq:kappa}
\kappa_1B_Q \cap \Omega \subset T_Q^\vartheta \subset T^{\vartheta,*}_Q \subset T^{\vartheta,**}_Q \subset \overline{T^{\vartheta,**}_Q} 
\subset \kappa_0 B_Q \cap \overline{\Omega} =: \frac12 B^*_Q \cap \overline{\Omega}, 
\end{align}
where $B_Q$ is defined as in \eqref{eq:BQ}. 

\subsection{PDE estimates} 
Now we recall several facts concerning the elliptic measures and the Green functions. For our first results 
we will only assume that $\Omega \subset \R^{n+1}$, $n \geq 2$, is an open set, not necessarily connected, 
with $\partial \Omega$ being ADR. Later we will focus on the case where $\Omega$ is a 
$1$-sided CAD. 

Let $Lu = - \div(A \nabla u)$ be a variable coefficient second order divergence form operator with 
$A(X)=(a_{i, j}(X))_{i,j=1}^{n+1}$ being a real (not necessarily symmetric) matrix with 
$a_{i, j} \in L^{\infty}(\Omega)$ for $1 \leq i, j \leq n+1$, and $A$ uniformly elliptic, that is, there 
exists $\Lambda \geq 1$ such that 
\begin{align*}
\Lambda^{-1} |\xi|^2 \leq A(X) \xi \cdot \xi,\quad |A(X) \xi \cdot \eta| \leq \Lambda |\xi| |\eta|,
\quad \forall\,\xi, \eta \in \R^{n+1} \text{ and  a.e.~}X \in \Omega.
\end{align*}

In what follows we will only be working with this kind of operators, we will refer to them as ``elliptic operators'' 
for the sake of simplicity. We write $L^{\top}$ to denote the transpose of $L$, or, in other words, 
$L^{\top}u = -\div(A^{\top}\nabla u)$ with $A^{\top}$ being the transpose matrix of $A$. 

We say that a function $u \in W_{\loc}^{1,2}(\Omega)$ is a weak solution of $Lu=0$ in $\Omega$, or that
$Lu=0$ in the weak sense, if 
\begin{align*}
\iint_{\Omega} A(X) \nabla u(X) \cdot \nabla \phi(X)=0, \quad \forall\,\phi \in C_c^{\infty}(\Omega). 
\end{align*}
Here and elsewhere $C_c^{\infty}(\Omega)$ stands for the set of compactly supported smooth functions with all derivatives of all orders being continuous.

Associated with the operators $L$ and $L^{\top}$,  one can respectively construct the elliptic measures 
$\{\w_L^X\}_{X \in \Omega}$ and $\{\w_{L^\top}^X\}_{X \in \Omega}$, and the Green functions 
$G_L$ and $G_{L^{\top}}$ (see \cite{HMT2} for full details). We next present some definitions and properties 
that will be used throughout this paper.

The following lemmas can be found in \cite{HMT2}.

\begin{lemma}
Suppose that $\Omega \subset  \R^{n+1}$, $n\ge 2$, is an open set such that $\partial \Omega$ is 
ADR. Given an elliptic operator $L$, there exist $C>1$ (depending only on dimension 
and on the ellipticity of $L$) and $c_{\theta}>0$ (depending on the above parameters and on 
$\theta \in (0, 1)$) such that $G_L$, the Green function associated with $L$, satisfies 
\begin{align}
G_L(X, Y) &\leq C|X-Y|^{1-n}; \\ 
c_{\theta} |X-Y|^{1-n} \leq G_L(X, Y), \quad &\text{if } |X-Y| \leq \theta \delta(X), \theta \in(0,1); \\
G_L(\cdot, Y) \in C(\overline{\Omega} \setminus \{Y\}) &\text{ and } 
G_L(\cdot, Y)|_{\partial \Omega} \equiv 0, \forall\,Y \in \Omega;  \\ 
G_L(X, Y) \geq 0, &\quad \forall\,X, Y \in \Omega, X \neq Y;  \\ 
G_L(X, Y)=G_{L^{\top}}(Y, X), &\quad \forall\,X, Y \in \Omega, X \neq Y;  
\end{align}
Moreover, $G_L(\cdot, Y) \in W^{1,2}_{\loc}(\Omega \setminus \{Y\})$ for every $Y \in \Omega$, 
and satisfies $LG_L(\cdot, Y)=\delta_Y$ in the weak sense in $\Omega$, that is,  
\begin{equation}
\iint_{\Omega} A(X) \nabla_{X} G_L(X, Y) \cdot \nabla \Phi(X) dX = \Phi(Y), \quad \forall\,\Phi \in C_c^{\infty}(\Omega). 
\end{equation} 

Finally, the following Riesz formula holds 
\begin{equation}
\iint_{\Omega} A^{\top}(Y) \nabla_{Y} G_{L^{\top}}(Y, X) \cdot \nabla \Phi(Y) dY 
= \Phi(X) - \int_{\partial \Omega} \Phi d\w_L^X, 
\end{equation} 
for a.e.~$X \in \Omega$ and for every $\Phi \in C_c^{\infty} (\R^{n+1})$.  
\end{lemma}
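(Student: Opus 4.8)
This is a compendium of by now standard facts about the Green function, established in \cite{HMT2}; I only sketch the argument, organized around the usual Gr\"uter--Widman construction.

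\emph{Construction and interior upper bound.} Fix $Y\in\Omega$ and, for $0<\rho<\delta(Y)$, let $u_\rho^Y\in W_0^{1,2}(\Omega)$ be the weak solution of $Lu_\rho^Y=|B(Y,\rho)|^{-1}\mathbf 1_{B(Y,\rho)}$ with zero boundary data; this exists by Lax--Milgram since $n+1\ge3$, so that the Dirichlet form is coercive on the homogeneous space obtained by closing $C_c^\infty(\Omega)$ under $\|\nabla\cdot\|_{L^2}$ via the Sobolev inequality. The maximum principle gives $u_\rho^Y\ge0$, and testing the equation against truncations of $u_\rho^Y$ yields the weak-type bound $|\{u_\rho^Y>s\}|\le Cs^{-(n+1)/(n-1)}$ with $C=C(n,\Lambda)$. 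Since $u_\rho^Y$ is a nonnegative $L$-solution on $\Omega\setminus\overline{B(Y,2\rho)}$, Moser's local boundedness estimate combined with this distributional bound gives
\[
u_\rho^Y(X)\le C\,|X-Y|^{1-n},\qquad X\in\Omega,\ |X-Y|\ge 2\rho,
\]
with $C=C(n,\Lambda)$ \emph{independent of $\rho$ and of $\Omega$}. Caccioppoli on dyadic annuli around $Y$ provides uniform local $W^{1,2}$ bounds away from $Y$; extracting a limit $G_L(\cdot,Y):=\lim_{\rho\to0}u_\rho^Y$ one obtains a nonnegative function in $W^{1,2}_{\loc}(\Omega\setminus\{Y\})$ which inherits the upper bound and, passing to the limit in the equation (as $|B(Y,\rho)|^{-1}\mathbf 1_{B(Y,\rho)}\to\delta_Y$), satisfies $LG_L(\cdot,Y)=\delta_Y$ in the weak sense. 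For unbounded $\Omega$ one may instead exhaust $\Omega$ by bounded subdomains, the domain-independence of $C$ being precisely what allows passing to the limit.

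\emph{Lower bound, nonnegativity, continuity up to $\partial\Omega$.} If $|X-Y|\le\theta\delta(X)$ then $\delta(Y)\ge(1-\theta)\delta(X)$, so $\delta(X)\approx_\theta\delta(Y)$; picking $Z$ with $|Z-Y|\approx_\theta\delta(X)$ small enough that $B(Y,2|Z-Y|)\subset\Omega$, the maximum principle bounds $G_L(\cdot,Y)$ below on that ball by the Green function of $L$ on it, whose lower bound $\gtrsim|Z-Y|^{1-n}$ is classical, giving $G_L(Z,Y)\gtrsim_\theta\delta(X)^{1-n}$; Harnack's inequality for the positive $L$-solution $G_L(\cdot,Y)$ along a chain in $\Omega$ joining $Z$ to $X$ around $Y$ then yields $c_\theta|X-Y|^{1-n}\le G_L(X,Y)$. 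Nonnegativity is inherited from $u_\rho^Y\ge0$. Interior continuity on $\Omega\setminus\{Y\}$ is De Giorgi--Nash; since $\partial\Omega$ is ADR, $\R^{n+1}\setminus\Omega$ satisfies the capacity density condition, so $\Omega$ is Wiener regular and boundary H\"older continuity holds, whence each $u_\rho^Y$ — and hence $G_L(\cdot,Y)$ — extends continuously by $0$ across $\partial\Omega$. Thus $G_L(\cdot,Y)\in C(\overline\Omega\setminus\{Y\})$ with $G_L(\cdot,Y)|_{\partial\Omega}\equiv0$.

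\emph{Symmetry and Riesz formula.} Both come from the duality built into the approximants. Writing $w_\rho$ for the analogous approximant of $G_{L^\top}(\cdot,X)$ built from $L^\top$, one tests the equation for $w_\rho$ against $u_\eta^Y\in W_0^{1,2}(\Omega)$ and the equation for $u_\eta^Y$ against $w_\rho\in W_0^{1,2}(\Omega)$ to get $|B(X,\rho)|^{-1}\int_{B(X,\rho)}u_\eta^Y\,dZ=|B(Y,\eta)|^{-1}\int_{B(Y,\eta)}w_\rho\,dZ$; letting $\rho\to0$ and then $\eta\to0$ and using the continuity just proved yields $G_L(X,Y)=G_{L^\top}(Y,X)$ for $X\ne Y$. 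For the Riesz formula, fix $\Phi\in C_c^\infty(\R^{n+1})$ and let $v$ be the $L$-solution in $\Omega$ with $v-\Phi\in W_0^{1,2}(\Omega)$, so $v(X)=\int_{\partial\Omega}\Phi\,d\w_L^X$ for a.e.\ $X$ by the definition of elliptic measure; testing the equation for $w_\rho$ against $\Phi-v\in W_0^{1,2}(\Omega)$ and the equation $Lv=0$ against $w_\rho\in W_0^{1,2}(\Omega)$ and adding gives $\iint_\Omega A^\top\nabla w_\rho\cdot\nabla\Phi\,dY=|B(X,\rho)|^{-1}\int_{B(X,\rho)}(\Phi-v)\,dZ$, and as $\rho\to0$ the left side converges to $\iint_\Omega A^\top(Y)\nabla_Y G_{L^\top}(Y,X)\cdot\nabla\Phi(Y)\,dY$ (using the uniform local $W^{1,2}$ bounds away from $X$ and the uniform smallness $\iint_{B(X,\varepsilon)}|\nabla w_\rho|\,dZ\lesssim\varepsilon$ near $X$) while the right side converges to $\Phi(X)-\int_{\partial\Omega}\Phi\,d\w_L^X$. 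The main technical point throughout is the interior upper bound $G_L(X,Y)\le C|X-Y|^{1-n}$ with $C$ depending only on $n$ and ellipticity — stability under exhausting an unbounded domain rests on this — together with the boundary continuity; both follow from De Giorgi--Nash--Moser theory and the capacity density condition inherited from ADR, after which the remaining assertions are routine.
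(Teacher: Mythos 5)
The paper offers no proof of this lemma; it simply records the facts and refers to \cite{HMT2}. Your Gr\"uter--Widman sketch is correct and is precisely the standard route one would find there, built on the domain-independent $|X-Y|^{1-n}$ upper bound (which permits exhaustion of unbounded $\Omega$), Wiener regularity inherited from the ADR boundary via the capacity density condition, a local Harnack chain near the pole (which needs no global Harnack chain assumption since everything takes place inside a ball $B(Y,c\delta(Y))\subset\Omega$), and duality between $L$- and $L^{\top}$-approximants for the symmetry relation and the Riesz formula.
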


\begin{lemma}\label{lemma:proppde}
Suppose that $\Omega \subset  \R^{n+1}$, $n\ge 2$, is a $1$-sided CAD. Let $L$ be an elliptic operator. There exists a constant $C$ 
(depending only on the dimension, the $1$-sided CAD constants and the ellipticity of $L$) such that for 
every ball $B_0:=B(x_0, r_0)$ with $x_0 \in \partial \Omega$ and $0<r_0<\diam(\partial \Omega)$, and $\Delta_0=B_0\cap\pom$ we 
have the following properties: 

\begin{list}{\textup{(\theenumi)}}{\usecounter{enumi}\leftmargin=1cm \labelwidth=1cm \itemsep=0.2cm \topsep=.2cm \renewcommand{\theenumi}{\alph{enumi}}}

\item There holds  
\begin{align}\label{Bourgain}
\w_L^Y(\Delta_0) \geq 1/C, \quad \forall\,Y \in \Omega \cap B(x_0, C^{-1}r_0). 
\end{align}

\item  If $B=B(x, r)$ with $x \in \partial \Omega$ is such that $2B \subset B_0$, then for any 
$X \in \Omega \setminus B_0$, 
\begin{align}\label{eq:comparison} 
C^{-1} \w_L^{X}(\Delta) \leq r^{n-1} G_L(X, X_{\Delta}) \leq C \w_L^{X}(\Delta).  
\end{align} 
\item[$(b)$] If $X \in \Omega \backslash 4B_0$, then we have 
\begin{align}\label{eq:doubling} 
\w_L^X(2\Delta_0) \leq C \w_L^X(\Delta_0).  
\end{align}
\end{list}
\end{lemma}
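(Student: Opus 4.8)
These three assertions are the classical Bourgain, Caffarelli--Fabes--Mortola--Salsa, and doubling estimates for elliptic measure, all of which are established in \cite{HMT2}; below I outline the route I would take, since the arguments are entirely standard. For \eqref{Bourgain}, fix $Y\in\Omega\cap B(x_0,C^{-1}r_0)$ and set $v(Z):=1-\w_L^Z(\Delta_0)=\w_L^Z(\pom\setminus\Delta_0)$, a nonnegative weak solution of $Lv=0$ in $\Omega$ with $0\le v\le1$ and $v=0$ on $\Delta_0$ (up to $\w_L$-null sets). Writing $D:=\Omega\cap B_0$, the maximum principle gives $v\le h$ in $D$, where $h$ is the $L$-harmonic function in $D$ with boundary data $1$ on $\partial B_0\cap\Omega$ and $0$ on $\pom\cap B_0=\Delta_0$, so that $\w_L^Y(\Delta_0)=1-v(Y)\ge 1-h(Y)=\w_D^Y(\Delta_0)$, the $D$-elliptic measure of $\Delta_0$. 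It therefore suffices to bound $\w_D^Z(\Delta_0)$ below on $B(x_0,C^{-1}r_0)$, and this is exactly where Ahlfors--David regularity enters: the lower bound $\H^n(\Delta(x_0,r_0/2))\gtrsim r_0^n$ yields the scale-invariant capacity density estimate $\Cp(\pom\cap B(x_0,r_0/2))\gtrsim r_0^{\,n-1}$ (with $L$-capacity, comparable to the Newtonian one by ellipticity), and comparing $\w_D^\cdot(\Delta_0)$ from below with the $L$-equilibrium potential of $\pom\cap B(x_0,r_0/2)$ relative to $B_0$ produces $\w_D^Z(\Delta_0)\gtrsim 1$ for $Z$ near $x_0$.

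For \eqref{eq:comparison}, fix $X\in\Omega\setminus B_0$, put $\Delta=\Delta(x,r)$, and recall that $Y\mapsto G_L(Y,X)$ is a nonnegative solution of $L(\cdot)=0$ in $\Omega\setminus\{X\}$ vanishing on $\pom$, while $Z\mapsto G_L(X,Z)=G_{L^\top}(Z,X)$ is a nonnegative solution of $L^\top(\cdot)=0$ in $\Omega\setminus\{X\}$ vanishing on $\pom$. Since $2B\subset B_0$ and $X\notin B_0$ we have $|X-Z|\gtrsim r$ for all $Z\in\overline{B}$, so the upper Green bound gives $r^{\,n-1}G_L(X,X_\Delta)\lesssim 1$; and by \eqref{Bourgain} together with the Harnack chain condition one has $\w_L^{X_\Delta}(\Delta)\gtrsim1$. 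The two inequalities in \eqref{eq:comparison} are then obtained by running the maximum principle for $\w_L^\cdot(\Delta)$ against $r^{\,n-1}G_L(\cdot,X)$ (and, using the Green symmetry, against $r^{\,n-1}G_L(X,\cdot)$) on a region of the form $\Omega\setminus\overline{B}$ or $\Omega\cap(2B\setminus\overline{B})$, on the $\pom$-part of whose boundary both competitors vanish, so that everything reduces to a comparison on the inner sphere $\partial B\cap\Omega$. The main obstacle is precisely this last comparison: the lower Green bound is only available near a corkscrew point and $\w_L^\cdot(\Delta)$ is only bounded below near $\Delta$ itself, so there is no two-sided pointwise control on $\partial B\cap\Omega$ and no boundary Harnack principle at this level of generality. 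The standard remedy, which I would follow as in \cite{HMT2}, is to route the argument through the elliptic measure and Green function of the subdomain $\Omega\cap 2B$ --- which is as smooth as a ball on the relevant portion of its boundary, so there the needed estimates are elementary --- combined with Harnack's inequality and interior (De Giorgi--Nash) regularity; alternatively one iterates over a chain of dyadic annuli using \eqref{eq:doubling} and interior Hölder estimates. All constants depend only on $n$, the ellipticity of $L$, and the $1$-sided CAD constants.

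Finally, \eqref{eq:doubling} follows from \eqref{eq:comparison}. For $X\in\Omega\setminus 4B_0$, applying \eqref{eq:comparison} once with $B=B_0$ and once with $B=2B_0$ (both admissible, since $2\cdot 2B_0=4B_0$ and $X\notin 4B_0$) gives $\w_L^X(\Delta_0)\approx r_0^{\,n-1}G_L(X,X_{\Delta_0})$ and $\w_L^X(2\Delta_0)\approx r_0^{\,n-1}G_L(X,X_{2\Delta_0})$. The corkscrew points $X_{\Delta_0}$ and $X_{2\Delta_0}$ both lie in $B(x_0,2r_0)$ at distance $\gtrsim r_0$ from $\pom$, hence by the Harnack chain condition they are joined by a chain of uniformly bounded length contained in a region on which $Z\mapsto G_L(X,Z)$ is a nonnegative $L^\top$-solution (here we use $X\notin 4B_0$), so Harnack's inequality yields $G_L(X,X_{2\Delta_0})\approx G_L(X,X_{\Delta_0})$ and therefore $\w_L^X(2\Delta_0)\lesssim\w_L^X(\Delta_0)$, as claimed.
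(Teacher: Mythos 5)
The paper does not prove this lemma at all: it is stated purely as a quotation of known facts, preceded by the sentence ``The following lemmas can be found in \cite{HMT2}.'' So there is no proof of the paper's own to compare against, and your proposal is really being measured against the standard literature (Bourgain's estimate, the Caffarelli--Fabes--Mortola--Salsa comparison, doubling). With that understanding, your sketch is a correct and faithful outline of how these estimates are actually established. Part (a): the reduction via $v=\w_L^{\cdot}(\pom\setminus\Delta_0)$, the maximum principle comparison with the $L$-harmonic function in $\Omega\cap B_0$, and the final step via the capacity density condition inherited from Ahlfors--David regularity is exactly the Bourgain argument. Part (c): deriving doubling from the CFMS estimate applied at radii $r_0$ and $2r_0$ plus a Harnack chain between $X_{\Delta_0}$ and $X_{2\Delta_0}$ in the region where $G_L(X,\cdot)$ is an $L^\top$-solution is the standard proof; the only caveat is that one should check the Harnack chain can be taken inside a $C r_0$-dilate of $B_0$ avoiding the pole $X$, which is why such statements are sometimes phrased with $X\notin \kappa B_0$ for a possibly larger dilation constant $\kappa$, but this is a bookkeeping matter.

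The one place your proposal is not self-contained is (b). You correctly identify the genuine difficulty — neither the Green function's lower bound nor the elliptic measure's lower bound holds pointwise on all of $\partial B\cap\Omega$, and there is no boundary Harnack principle available at this level of generality — and you correctly name the two standard remedies (pass to the elliptic measure and Green function of $\Omega\cap 2B$, or iterate over dyadic annuli with interior H\"older estimates). But you then defer the execution to \cite{HMT2} rather than carrying it out. Since the paper itself simply cites \cite{HMT2} and gives no argument, this does not put you at a disadvantage relative to the paper; but if the goal were a genuinely self-contained proof, (b) is where the real work would remain.
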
 

\section{Proof of Theorem \ref{thm:abs-cont}}\label{sec:proof-abs}

The goal of this section is to prove Theorem \ref{thm:abs-cont}. We start with the following observation which will be used throughout the paper:

\begin{remark}\label{remark:truncations}
For every $\alpha>0$, $0<r<r'$, and $\varpi\in \R$, if $F \subset\pom$ is a bounded set  and $v\in L^2_{\loc}(\Omega)$, then
\begin{equation}\label{est:two-trunc}
\sup_{x\in F}\iint_{\Gamma_{\alpha}^{r'}(x) \setminus \Gamma_{\alpha}^{r}(x)} 
|v(Y)|^2 \delta(Y)^{\varpi} dY 
<\infty.  
\end{equation}
To see this we first note that  since $F$ is bounded we can find $R$ large enough so that $F\subset B(0,R)$. Then, if $x\in F$ one readily sees that 
\begin{align*}
\Gamma_{\alpha}^{r'}(x) \backslash \Gamma_{\alpha}^{r}(x) 
\subset \overline{B(0, r'+R)} \cap 
\Big\{Y \in \Omega: \frac{r}{1+\alpha} \leq \delta(Y) \leq r' \Big\} =:K. 
\end{align*}
Note that $K \subset \Omega$ is a compact set.  Then, since  $v \in L^2_{\loc}(\Omega)$, we conclude that 
\begin{align}\label{eq:Gr-Gr0}
\sup_{x\in F}\iint_{\Gamma_{\alpha}^{r'}(x) \setminus \Gamma_{\alpha}^{r}(x)} 
|v(Y)|^2 \delta(Y)^{\varpi} dY 
\leq \max\left\{r',\frac{1+\alpha}{r}\right\}^{|\varpi|} \iint_{K}  |v(Y)|^2 dY<\infty. 
\end{align}
\end{remark}

We can now proceed to prove Theorem \ref{thm:abs-cont}.  We first note that it is immediate to see that   $\eqref{list:si-wL-si}\Longrightarrow\eqref{list:wL}$, $\eqref{list:Sr-L2}\Longrightarrow\eqref{list:Sr}$, and $\eqref{list:Sr}\Longrightarrow\eqref{list:Srx}$. Moreover, \eqref{est:two-trunc} yields easily $\eqref{list:Srx}\Longrightarrow\eqref{list:Sr}$.  
Thus, it suffices to prove the following implications: 
\begin{align*}
\eqref{list:wL} \Longrightarrow \eqref{list:Sr-L2}, \qquad 
\eqref{list:wL} \Longrightarrow \eqref{list:si-wL-si}, \qquad\text{and}\qquad 
\eqref{list:Sr} \Longrightarrow \eqref{list:wL}. 
\end{align*}

\subsection{Proof of \texorpdfstring{$\eqref{list:wL} \Longrightarrow \eqref{list:Sr-L2}$}{(a) implies (c)}}\label{sec:a-c}
Assume that $\sigma \ll \w_L$. 
Fix and arbitrary $Q_0 \in \D_{k_0}$ where $k_0 \in \Z$ is taken so that $2^{-k_0}=\ell(Q_0) < \diam(\partial \Omega)/M_0$, 
where $M_0$ is large enough and will be chosen later. From the construction of $T_{Q_0}^{\vartheta}$ one can easily see that 
$T_{Q_0}^{\vartheta} \subset \frac12B_{Q_0}^*:=\kappa_0 B_{Q_0}$, see \eqref{eq:kappa}. Let $X_0:=X_{M_0 \Delta_{Q_0}}$ be an interior corkscrew point relative to $M_0 \Delta_{Q_0}$ so that $X_0 \notin 4B_{Q_0}^*$ provided that $M_0$ is taken large enough depending on the allowable parameters. Since $\partial \Omega$ is ADR, \eqref{Bourgain} and Harnack's inequality give that $\w_L^{X_0}(Q_0) \geq C_0^{-1}$, where $C_0>1$ depends on 1-sided CAD constants and $M_0$. We now normalize the elliptic measure and the Green function as follows 
\begin{align}\label{eq:normalize}
\w:= C_0 \sigma(Q_0) \w_L^{X_0} \quad\text{ and }\quad 
\G(\cdot) := C_0 \sigma(Q_0) G_L(X_0, \cdot). 
\end{align} 
The hypothesis $\sigma \ll \w_L$ implies that $\sigma \ll \w$. Note that $1 \leq \frac{\w(Q_0)}{\sigma(Q_0)} \leq C_0$. 
Let $N > C_0$ and let $\F_N^+ :=\{Q_j\} \subset \D_{Q_0} \backslash \{Q_0\}$, respectively,
$\F_N^- :=\{Q_j\} \subset \D_{Q_0} \backslash \{Q_0\}$,  be the collection of descendants of $Q_0$ 
which are maximal (and therefore pairwise disjoint) with respect to the property that 
\begin{align}\label{eq:stopping}
\frac{\w(Q_j)}{\sigma(Q_j)} < \frac{1}{N}, \quad\text{ respectively }\quad 
\frac{\w(Q_j)}{\sigma(Q_j)} >N. 
\end{align}
Write $\F_N=\F_N^+\cup\F_N^-$ and note that $\F_N^+\cap\F_N^-=\emptyset$. By maximality, one has 
\begin{align}\label{eq:NN}
\frac{1}{N}\leq \frac{\w(Q)}{\sigma(Q)} \leq N, \quad \forall\,Q \in \D_{\F_N, Q_0}. 
\end{align}
Write for every $N>C_0$, 
\begin{align}\label{eq:E0N-EN}
E_N^\pm := \bigcup_{Q \in \F_N^\pm} Q,
\qquad
E_N^0=E_N^+\cup E_N^-, \qquad
E_N := Q_0\setminus E_N^0,
\end{align}
and
\begin{align}\label{eq:Q-decom}
Q_0=
\bigg(\bigcap_{N>C_0} E_N^0\bigg)\cup \bigg(\bigcup_{N>C_0} E_N \bigg)
=: E_0\cup \bigg(\bigcup_{N>C_0} E_N \bigg)
. 
\end{align}
We claim that for every $N>C_0$
\begin{align}\label{qafferr}
E_N^+=\{x\in Q_0: M_{Q_0, \w}^{\text{d}} \sigma(x)>N \}\quad\text{and}\quad E_N^-=\{x\in Q_0: M_{Q_0, \sigma}^{\text{d}} \w(x)>N \},
\end{align}
where given two non-negative Borel measures $\mu$ and $\nu$ we set
\begin{align*}
M_{Q_0, \mu}^{\text{d}}\nu (x) := \sup_{x \in Q \in \D_{Q_0}} 
\frac{\nu(Q)}{\mu(Q)}. 
\end{align*}
To see the first equality in \eqref{qafferr}, let $x\in E_N^+$. Then, there exists $Q_j\in \F_N^+\subset\D_{Q_0}$ so that $Q_j\ni x$. Thus, by \eqref{eq:stopping}
\[
M_{Q_0, \w}^{\text{d}}\sigma (x)
\ge
\frac{\sigma(Q_j)}{\w(Q_j)}>N.
\]
On the other hand, if $M_{Q_0, \w}^{\text{d}}\sigma (x)>N$, there exists $Q\in\D_{Q_0}$ so that $\sigma(Q)/\w(Q)>N$. By the maximality of $\F_N^+$ we therefore conclude that $Q\subset Q_j$ for some $Q_j\in\F_N^+$. Hence, $x\in E_N^+$ as desired. This completes the proof of the first equality in \eqref{qafferr} and the second one follows using the same argument interchanging the roles of $\w$ and $\sigma$.

Once we have shown  \eqref{qafferr}, we clearly see that $\{E_N^+\}_N$, $\{E_N^-\}_N$, and $\{E_N^0\}_N$ are decreasing sequences of sets.  This, together with the fact that $\omega(E_N^\pm)\le \omega(Q_0)<\infty$ and $\sigma(E_N^\pm)\le \sigma(Q_0)<\infty$, implies that
\begin{equation}\label{wrqfawfvrw}
\omega\bigg(\bigcap_{N>C_0} E_N^\pm\bigg)=\lim_{N\to\infty} \omega(E_N^\pm),
\qquad
\sigma\bigg(\bigcap_{N>C_0} E_N^\pm\bigg)=\lim_{N\to\infty} \sigma(E_N^\pm).
\end{equation}

Our next goal is to show 
that $\sigma(E_0)=0$. To see this we note that by \eqref{eq:stopping}
\[
\omega(E_N^+)
=
\sum_{Q\in \F_N^+} \omega(Q)
<\frac1N\sum_{Q\in \F_N^+} \sigma(Q)
=\frac1N\sigma(E_N^+)
\le 
\frac1N\sigma(Q_0)
\]
and, by \eqref{wrqfawfvrw}
\[
\omega\bigg(\bigcap_{N>C_0} E_N^+\bigg)=\lim_{N\to\infty} \omega(E_N^+)=0.
\]
Use this, the fact that $\sigma \ll \w$, and \eqref{wrqfawfvrw} to derive
\begin{equation}\label{54wt5gt}
0=\sigma\bigg(\bigcap_{N>C_0} E_N^+\bigg)=\lim_{N\to\infty} \sigma(E_N^+).
\end{equation}
On the other hand, \eqref{eq:stopping} yields
\[
\sigma(E_N^-)
=
\sum_{Q\in \F_N^-} \sigma(Q)
<\frac1N\sum_{Q\in \F_N^-} \omega(Q)
=\frac1N\omega(E_N^-)
\le 
\frac1N\omega(Q_0),
\]
and \eqref{wrqfawfvrw} implies 
\begin{equation}\label{54wt5gt:alt}
\sigma\bigg(\bigcap_{N>C_0} E_N^-\bigg)=\lim_{N\to\infty} \sigma(E_N^-)=0.
\end{equation}
All these, together with \eqref{wrqfawfvrw} and the fact that $\{E_N^0\}_N$ is a decreasing sequence of sets with $\sigma(E_N^0)\le\sigma(Q_0)<\infty$, give
\begin{equation}\label{34fravcrv}
\sigma(E_0)
=
\lim_{N\to\infty}
\sigma(E_N^0)
\le 
\lim_{N\to\infty}
\sigma(E_N^+)+\lim_{N\to\infty}\sigma(E_N^-)
=0,
\end{equation}
hence $\sigma(E_0)=0$.

Next, by \eqref{eq:boundary} and \cite[Proposition 6.3]{HM}, we have 
\begin{align}\label{eq:EN-FN}
E_N \subset F_N:=\partial \Omega \cap \partial \Omega_{\F_N, Q_0}^{\vartheta}  
\qquad\text{and}\qquad \sigma(F_N \backslash E_N)=0.
\end{align}  
Note that \cite[Lemma 3.61]{HM} yields that $\Omega_{\F_N, Q_0}^\vartheta$ is a bounded $1$-sided CAD for any $\vartheta\ge \vartheta_0$. 
Now we are going to bound the square function in $L^2(F_N, \sigma)$. 
Let $u \in W_{\loc}^{1,2}(\Omega) \cap L^{\infty}(\Omega)$ be a weak solution of $Lu=0$ in $\Omega$. 
Let $\vartheta\ge \vartheta_0$ and note that by \eqref{eq:kappa}, we see that $2B_Q \subset B_{Q_0}^*$. Recalling \eqref{eq:normalize} and the fact $X_0 \not\in 4B_{Q_0}^*$, we use \eqref{eq:comparison},  \eqref{eq:doubling}, \eqref{eq:NN}, Harnack's inequality, and the fact that $\pom$ is ADR to conclude that 
\begin{align}\label{eq:w-sig-com}
\frac{\G(X)}{\delta(X)} \simeq \frac{\w(Q)}{\sigma(Q)} \simeq_N 1, 
\end{align}
for all $X \in I^*$ with $I \in \W_Q^{\vartheta,*}$ and $Q \in \D_{\F_N, Q_0}$. This and the definition of $\Omega_{\F_N, Q_0}^\vartheta$ yield  
\begin{align}\label{eq:u-G}
\iint_{\Omega_{\F_N, Q_0}^\vartheta} & |\nabla u(Y)|^2 \delta(Y) dY 
\lesssim_N \iint_{\Omega_{\F_N, Q_0}^\vartheta} |\nabla u(Y)|^2 \G(Y) dY.  
\end{align}
For every $M \geq 1$, we set $\F_{N, M}$ to be the family of maximal cubes of the collection $\F_N$ 
augmented by adding all the cubes $Q \in \D_{Q_0}$ such that $\ell(Q) \leq 2^{-M} \ell(Q_0)$. 
This means that $Q \in \D_{\F_{N,M}, Q_0}$ if and only if $Q \in \D_{\F_N, Q_0}$ and $\ell(Q)>2^{-M}\ell(Q_0)$. 
Observe that $\D_{\F_{N,M}, Q_0} \subset \D_{\F_{N, M'}, Q_0}$ for all $M \leq M'$, and hence 
$\Omega_{\F_{N,M}, Q_0}^\vartheta \subset \Omega_{\F_{N,M'}, Q_0}^\vartheta \subset \Omega_{\F_N, Q_0}^\vartheta$. This, together with the 
monotone convergence theorem, gives 
\begin{align}\label{eq:lim-FM}
\iint_{\Omega_{\F_N, Q_0}^\vartheta} |\nabla u(Y)|^2 \G(Y) dY 
=\lim_{M \to \infty} \iint_{\Omega_{\F_{N,M}, Q_0}^\vartheta} |\nabla u(Y)|^2 \G(Y) dY.  
\end{align}
Invoking \cite[Proposition 3.58]{CHMT}, one has  
\begin{align}\label{eq:int-parts}
\iint_{\Omega_{\F_{N,M}, Q_0}^\vartheta} |\nabla u(Y)|^2 \G(Y) dY 
\lesssim_{N} \sigma(Q_0) \simeq 2^{-k_0 n}, 
\end{align}
where the implicit constants are independent of $M$. 
Consequently, combining \eqref{eq:u-G}, \eqref{eq:lim-FM} and \eqref{eq:int-parts}, we deduce that 
\begin{align}\label{eq:saw-square}
\iint_{\Omega_{\F_N, Q_0}^\vartheta} & |\nabla u(Y)|^2 \delta(Y) dY \leq C_N. 
\end{align}

To continue, we recall the dyadic square function defined in \cite[Section 2.3]{HMU}: 
\begin{align*}
S_{Q_0}^\vartheta u(x) := \left(\iint_{\Gamma_{Q_0}^\vartheta(x)} |\nabla u(Y)|^2 \delta(Y)^{1-n} dY \right)^{1/2}, 
\text{ where } \Gamma_{Q_0}^\vartheta(x) := \bigcup_{x \in Q \in \D_{Q_0}} U_{Q}^\vartheta. 
\end{align*}
Note that if $Q \in \D_{Q_0}$ is so that $Q \cap E_N \neq \emptyset$, then necessarily $Q \in \D_{\F_N, Q_0}$, 
otherwise, $Q \subset Q' \in \F_N$, hence $Q \subset Q_0 \backslash E_N$. In view of \eqref{eq:saw-square}, we have  
\begin{align}\label{eq:SQk-ENk}
\int_{E_N} S_{Q_0}^\vartheta u(x)^2 d\sigma(x) 
&=\int_{E_N} \iint_{\bigcup\limits_{x \in Q \in \D_{Q_0}} U_Q^\vartheta} 
|\nabla u(Y)|^2 \delta(Y)^{1-n} dY d\sigma(x) 
\nonumber \\
&\lesssim \sum_{Q \in \D_{Q_0}} \ell(Q)^{-n} 
\sigma(Q \cap E_N) \iint_{U_Q^\vartheta}  |\nabla u(Y)|^2 \delta(Y) dY 
\nonumber \\
&\lesssim \sum_{Q \in \D_{\F_N, Q_0}} 
\iint_{U_Q^\vartheta}  |\nabla u(Y)|^2 \delta(Y) dY 
\nonumber \\
&\lesssim \iint_{\Omega_{\F_N, Q_0}^\vartheta}  |\nabla u(Y)|^2 \delta(Y) dY 
\leq C_{N}, 
\end{align}
where we have used that the family $\{U_Q^\vartheta\}_{Q\in\D}$ has bounded overlap.  
This along with the last condition in \eqref{eq:EN-FN} yields 
\begin{align}\label{eq:square-FNj}
S_{Q_0}^\vartheta u \in L^2(F_N, \sigma), \qquad\forall\, \vartheta\ge \vartheta_0.
\end{align} 

We next claim that fixed $\alpha>0$, we can find $\vartheta$ sufficiently large depending on $\alpha$ such that for any $r_0 \ll 2^{-k_0}$,  
\begin{align}\label{eq:Sa-SQ}
S_{\alpha}^{r_0}u(x) \leq S_{Q_0}^\vartheta u(x), \quad x \in Q_0. 
\end{align}
It suffices to show $\Gamma_{\alpha}^{r_0}(x) \subset \Gamma_{Q_0}^\vartheta(x)$ for any $x \in Q_0$. 
Indeed, let $Y \in \Gamma_{\alpha}^{r_0}(x)$. Pick $I \in \W$ so that $Y \in I$, and hence, 
$\ell(I) \simeq \delta(Y) \leq |Y-x|<r_0 \ll 2^{-k_0} = \ell(Q_0)$. 
Pick $Q_I \in \D_{Q_0}$ such that $x \in Q_I$ and $\ell(Q_I)=\ell(I) \ll \ell(Q_0)$. 
Thus, one has 
\begin{align*}
\dist(I, Q_I) \leq |Y-x| < (1+\alpha) \delta(Y) \leq C(1+\alpha) \ell(I) = C(1+\alpha) \ell(Q_I).  
\end{align*}
Recalling \eqref{eq:WQ}, if we take $\vartheta\ge \vartheta_0$ large enough so that 
\begin{equation}\label{eq:c0-alpha}
2^\vartheta \geq C(1+\alpha),
\end{equation}  
then $Y \in I \in \W_{Q_I}^\vartheta \subset \W^{\vartheta,*}_{Q_I}$. The latter gives that  
$Y \in U_{Q_I}^\vartheta \subset \Gamma_{Q_0}^\vartheta(x)$ and consequently \eqref{eq:Sa-SQ} holds.  
We should mention that the dependence of $\vartheta$ on $\alpha$ implies that all the sawtooth 
regions $\Omega_{\F_N, Q_0}^\vartheta$ above as well as all the implicit constants depend on $\alpha$.

To complete the proof we note that, it follows from \eqref{eq:square-FNj} and \eqref{eq:Sa-SQ} that 
$S_{\alpha}^{r_0} u \in L^2(F_N, \sigma)$. This together with Remark \ref{remark:truncations} easily yields 
\begin{equation}\label{eq:Sr-u-L2-FN}
S_{\alpha}^r u \in L^2(F_N, \sigma),\quad \text{for any } r>0.  
\end{equation}

We note that the previous argument has been carried out for an arbitrary $Q_0\in \D_{k_0}$. Hence, using  \eqref{eq:E0N-EN}, \eqref{eq:Q-decom}, and \eqref{eq:EN-FN} with $Q_k\in \D_{k_0}$, 
we conclude, with the induced notation, that 
\begin{align}\label{eq:EE-FF}
\partial \Omega = \bigcup_{Q_k \in \D_{k_0}} Q_k 
&=\bigg(\bigcup_{Q_k \in \D_{k_0}} E^k_0\bigg) \bigcup 
\bigg(\bigcup_{Q_k \in \D_{k_0}} \bigcup_{N>C_0} E^k_N \bigg) 
\nonumber \\
&=\bigg(\bigcup_{Q_k \in \D_{k_0}} E^k_0\bigg) \bigcup 
\bigg(\bigcup_{Q_k \in \D_{k_0}} \bigcup_{N>C_0} F^k_N \bigg) 
=: F_0 \cup \bigg(\bigcup_{k, N} F^k_N \bigg), 
\end{align}
where $\sigma(F_0)=0$ and $F^k_N=\partial \Omega \cap \partial \Omega_{\F^k_N, Q_k}$ where each 
$\Omega_{\F^k_N, Q_k} \subset \Omega$ is a bounded 1-sided CAD . Combining \eqref{eq:EE-FF} and \eqref{eq:Sr-u-L2-FN} with $F_N^k$ in place of $F_N$, the proof of 
$\eqref{list:wL} \Rightarrow \eqref{list:Sr-L2}$ is complete.  \qed

\subsection{Proof of \texorpdfstring{$\eqref{list:wL} \Longrightarrow\eqref{list:si-wL-si}$}{(a) implies (b)}}

We borrow some idea from \cite{TZ} and address some small inaccuracies 
that do not affect their conclusion. We follow and use the notation from the proof of $\eqref{list:wL}\Longrightarrow \eqref{list:Sr-L2}$. As before we fix an arbitrary cube $Q_0 \in \D_{k_0}$ and an integer $N>C_0$.  Recall that the family $\F_N$ of stopping cubes is constructed in \eqref{eq:stopping} and $E_N^0=E_N^+\cup E_N^-$ defined in \eqref{eq:E0N-EN}. As we have shown that $\{E_N^+\}_N$, $\{E_N^-\}_N$, and $\{E_N^0\}_N$ are decreasing sequence of sets it is easy to see that 
\begin{align}\label{q34rarefer}
\bigcap_{N>C_0} E_N^0
=
\bigg(\bigcap_{N>C_0} E_N^+\bigg)\bigcup \bigg(\bigcap_{N>C_0} E_N^- \bigg)
\end{align}

To proceed, recall our assumption $\sigma\ll \omega$. Set $h=d\sigma/d\w$ and 
\begin{align}\label{eq:L0-def}
L_0 := \left\{x \in Q_0: \fint_{Q_x} |h(y)-h(x)| \ d\w(y) \to 0,  \D_{Q_0} \ni Q_x \searrow \{x\} \right\}.
\end{align}
By the Lebesgue differentiation theorem for dyadic cubes
\begin{align}\label{eq:L0-points}
\w(Q_0 \backslash L_0)=0
\end{align} 
and, hence,
\begin{equation}\label{eq:sig-Q0-L0}
\sigma(Q_0 \backslash L_0)=0.
\end{equation} 
Then we can write
\[
Q_0
=
\bigg(\bigcup_{N>C_0} (L_0\setminus E_N^0)\bigg)
\bigcup \bigg(\bigcap_{N>C_0} E_N^+\bigg) \bigcup \bigg(\bigcap_{N>C_0} E_N^-\bigg)
\bigcup (Q_0\setminus L_0).
\]
By \eqref{54wt5gt}, \eqref{54wt5gt:alt}, and \eqref{eq:sig-Q0-L0} we then have
\[
\sigma \bigg(\bigcap_{N>C_0} E_N^+\bigg)
=
\sigma \bigg(\bigcap_{N>C_0} E_N^-\bigg)
=
\sigma(Q_0\setminus L_0)=0.
\]
Therefore we just need to show that there exists $C_N>1$ such that 
\begin{align}\label{eq:EEE}
C_N^{-1} \sigma(F) \leq \w_L(F) \leq C_N \sigma(F), \quad \forall\,F \subset L_0\setminus E_N^0.  
\end{align}
Assuming this momentarily, and applying it to every $Q_k\in \D_{k_0}$ we readily get \eqref{list:si-wL-si} using that $\pom=\bigcup_{Q_k\in \D_{k_0}} Q_k$.

Let us then focus on justifying \eqref{eq:EEE}. We claim that for any $x\in L_0\setminus E_N^0$ there holds
\begin{align}\label{rffrw}
\frac1{N}\le \frac{\sigma(Q)}{\w(Q)}\le N,
\qquad
\forall\,Q\in\D_{Q_0}, Q\ni x.
\end{align}
Otherwise, by the maximality of $\F_N^{+}$ or $\F_N^{-}$, one has  $Q\subset Q_j$ for some $Q_j\in\F_N^{+}\cup \F_N^{-}$. Hence $x\in E_N^0$ by \eqref{eq:E0N-EN} which is a contradiction. Using \eqref{eq:L0-def} and \eqref{rffrw} we then conclude that 
\begin{align}\label{stgbgbsrtb}
\frac1{N}\le h(x)\le N,
\qquad
\forall\,x\in L_0\setminus E_N^0.
\end{align}
Thus, for every $F\subset L_0\setminus E_N^0$ we conclude that
\[
\frac1{N}\,\w(F)
\le
\int_{F} h\,d\w
\le
N\,\w(F).
\]
This together with the fact that $h=d\sigma/d\w$ readily implies \eqref{eq:EEE} and \eqref{list:si-wL-si} holds as explained above. \qed

\subsection{Proof of \texorpdfstring{$\eqref{list:Sr}\Longrightarrow\eqref{list:wL}$}{(d) implies (a)}} 
Given $Q_0 \in \D$ and for any $\eta \in (0, 1)$, we define the modified dyadic square function 
\begin{align*}
S_{Q_0}^{\vartheta_0,\eta} u(x) := \left(\iint_{\Gamma_{Q_0}^{\vartheta_0,\eta}(x)}
|\nabla u(Y)|^2 \delta(Y)^{1-n} dY \right)^{1/2}, 
\end{align*}
where the modified non-tangential cone $\Gamma_{Q_0}^{\vartheta_0,\eta}(x)$ is given by 
\begin{align*}
\Gamma_{Q_0}^{\vartheta_0,\eta}(x) := \bigcup_{x \in Q \in \D_{Q_0}} U_{Q, \eta^3}^{\vartheta_0}, \qquad 
U_{Q, \eta^3}^{\vartheta_0} = \bigcup_{\substack{Q' \in \D_Q \\ \ell(Q')>\eta^3 \ell(Q)}} U_{Q'}^{\vartheta_0}. 
\end{align*}
Here we recall that $\vartheta_0$ depends on the 1-sided CAD constants (see Section \ref{section:dyadic}).

The following auxiliary result, whose proof is postponed to  Appendix~\ref{appendix:square}, extends \cite[Lemma~3.10]{CHMT} (see also \cite{KKoPT,KKiPT}). 
\begin{lemma}\label{lem:square}
There exist $0<\eta \ll 1$ (depending only on dimension, the $1$-sided CAD constants and the ellipticity of $L$) 
such that for every $Q_0 \in \D$, and for every Borel set $\emptyset\neq F \subset Q_0$ satisfying 
$\w_L^{X_{Q_0}}(F)=0$, there exists a Borel set $S \subset Q_0$ such that 
the bounded weak solution $u(X)=\w_L^{X}(S)$ satisfies 
\begin{align*}
S_{Q_0}^{\vartheta_0,\eta} u(x)=\infty,\qquad \forall\,x \in F. 
\end{align*}
\end{lemma}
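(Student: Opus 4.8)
The strategy is to build the desired solution $u(X)=\w_L^X(S)$ by choosing $S$ to be a carefully arranged countable union of dyadic cubes lying above $F$, so that near each point of $F$ the elliptic measure of $S$ oscillates infinitely often, forcing the square function to blow up. The starting point is the non-degeneracy and comparison estimates from Lemma \ref{lemma:proppde} together with the interior Caccioppoli and Harnack inequalities: since $\w_L^{X_{Q_0}}(F)=0$ but $F\neq\emptyset$, for every $x\in F$ and every generation $k$ there is a dyadic cube $Q\ni x$, $Q\in\D_{Q_0}$, with $\w_L^{X_{Q_0}}(Q)$ arbitrarily small relative to its ``size''; this is what lets us separate scales where $u$ is small from scales where $u$ is not small.

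\emph{Step 1 (a local oscillation estimate).} First I would prove the quantitative heart of the matter: there is $\eta\ll 1$ so that if $Q\in\D_{Q_0}$ and $S\subset Q_0$ is a Borel set with, say, $\w_L^{X_Q}(S\cap Q)\ge \tfrac12$ while $\w_L^{X_Q}(S\setminus R)$ is tiny for some much smaller descendant $R\subset Q$ (or vice versa), then the solution $u=\w_L^\cdot(S)$ satisfies $\iint_{U_{Q,\eta^3}^{\vartheta_0}}|\nabla u|^2\delta^{1-n}\,dY\gtrsim 1$. This follows by Caccioppoli on the Whitney cubes making up $U_{Q}^{\vartheta_0}$, comparing the values of $u$ at corkscrew points $X_Q$ and $X_R$ (which differ by a definite amount because $\w_L^{X_Q}(S)\approx 1$ but $\w_L^{X_R}(S)\approx 0$, using the Bourgain-type bound \eqref{Bourgain}, doubling \eqref{eq:doubling}, and Harnack), plus a Poincaré/telescoping argument along the Harnack chain that defines $U_Q^{\vartheta_0}$; the truncation parameter $\eta^3$ in $U_{Q,\eta^3}^{\vartheta_0}$ is exactly what guarantees that the descendant $R$ we jump to is still ``seen'' inside the modified cone, and $\eta$ is chosen small enough (depending only on dimension, ellipticity, and the 1-sided CAD constants) that the oscillation lower bound survives.

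\emph{Step 2 (stopping-time construction of $S$).} For each $x\in F$, using $\w_L^{X_{Q_0}}(F)=0$ and the above, I would recursively select a nested sequence of cubes $Q_0\supset Q^{(1)}_x\supset Q^{(2)}_x\supset\cdots$ shrinking to $x$, alternately forcing $\w_L^{X_{Q^{(j)}_x}}$ of the set we are constructing to be $\ge\tfrac12$ on even steps and $\le \varepsilon_j$ on odd steps; doing this uniformly over a suitable ``Lebesgue point'' subset of $F$ and patching via a Vitali/maximal-cube argument produces a single Borel set $S\subset Q_0$ whose indicator, when integrated against $\w_L^X$, has this alternating behavior along the cone over every $x\in F$. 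Then Step 1 applied on each $Q^{(j)}_x$ gives $\iint_{U^{\vartheta_0}_{Q^{(j)}_x,\eta^3}}|\nabla u|^2\delta^{1-n}\gtrsim 1$ for infinitely many $j$, and since these pieces are essentially disjoint subsets of $\Gamma_{Q_0}^{\vartheta_0,\eta}(x)$, we get $S_{Q_0}^{\vartheta_0,\eta}u(x)=\infty$ for every $x\in F$. This is essentially the scheme of \cite[Lemma~3.10]{CHMT} and \cite{KKiPT,KKoPT}, adapted so the blow-up happens \emph{pointwise everywhere on $F$} rather than in an averaged sense.

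\emph{Main obstacle.} The delicate point is the simultaneity: one must produce \emph{one} set $S$ that works for \emph{every} $x\in F$ at once, with the oscillation occurring at infinitely many scales at each such $x$. This requires organizing the stopping-time cubes over all of $F$ into a coherent family (so that the choices made for one point do not destroy the oscillation needed at a nearby point), and controlling the elliptic measure from two competing corkscrew viewpoints simultaneously — which is where the change-of-pole formula, doubling, and the precise smallness of $\eta$ all have to be balanced. The boundedness of $u$ is automatic ($0\le u\le 1$), and $Lu=0$ in the weak sense is standard for $u=\w_L^\cdot(S)$; the Borel measurability of $S$ follows from the countable nature of the construction.
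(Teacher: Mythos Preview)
Your overall strategy is the right one and matches the paper (and \cite{CHMT,KKiPT,KKoPT}): force $u=\w_L^{\,\cdot}(S)$ to oscillate by a fixed amount at infinitely many scales above each point of $F$, and convert each oscillation into a definite square-function contribution via Caccioppoli/Poincar\'e along a Harnack chain.  Step~1 is essentially the argument on \cite[p.~7919]{CHMT} that the paper invokes verbatim.

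The gap is in Step~2, precisely where you locate the ``main obstacle''.  The paper does \emph{not} select cubes point-by-point and then patch; it uses a global object, a \emph{good $\varepsilon_0$-cover} $\{\mathcal{O}_\ell\}_{\ell\ge 1}$ of $F$ with respect to $\w_L^{X_{Q_0}}$: each $\mathcal{O}_\ell$ is a pairwise disjoint union of dyadic subcubes of $Q_0$ covering $F$, with $\w_L^{X_{Q_0}}(\mathcal{O}_\ell\cap Q)\le \varepsilon_0\,\w_L^{X_{Q_0}}(Q)$ for every cube $Q$ in the $(\ell-1)$-st layer.  One then sets $S:=\bigcup_{\ell\ge 2}(\mathcal{O}_{\ell-1}^{(\eta)}\setminus\mathcal{O}_\ell)$, where $\mathcal{O}_\ell^{(\eta)}$ picks out a fixed ``central'' $\eta$-child of each cube in the $\ell$-th layer.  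Because the layers cover $F$ at every level, \emph{every} $x\in F$ sits in a cube $Q^\ell(x)$ of the $\ell$-th family for all $\ell$, and the oscillation of $u$ between the corkscrew points of $(Q^\ell(x))^{(\eta)}$ and $(P^\ell(x))^{(\eta)}$ is $\ge c_0$ for all $\ell$ simultaneously.  The only new input beyond \cite[Lemma~3.10]{CHMT} is that, since $\w_L^{X_{Q_0}}(F)=0$, the finite-length cover from \cite[Lemma~3.5]{CHMT} can be iterated (reapplying it inside each cube of the last layer) to produce a cover of \emph{infinite} length; monotone convergence then transfers the oscillation bound from the truncated sets $S_N$ to $S$.

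Two concrete problems with your Step~2 as written.  First, restricting to a ``Lebesgue point subset of $F$'' is not allowed: the lemma demands $S_{Q_0}^{\vartheta_0,\eta}u(x)=\infty$ at \emph{every} $x\in F$, and in the application to $\eqref{list:Sr}\Rightarrow\eqref{list:wL}$ you have no a~priori control of $\sigma|_F$, so discarding any subset of $F$ is fatal.  Second, a point-by-point selection followed by a Vitali/maximal-cube ``patching'' does not obviously yield a single $S$ that oscillates at every $x$: the Vitali step keeps only a sub-collection of cubes, so the infinitely many scales needed at the points whose cubes were discarded are lost.  The global $\varepsilon_0$-cover construction sidesteps both issues at once.
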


\medskip

Assume that \eqref{list:Sr} holds. In order to prove that $\sigma \ll \w_L$ on $\partial \Omega$, by Lemma \ref{lem:dyadic} 
(a), it suffices to show that 
for any given $Q_0 \in \D$,  
\begin{align}\label{eq:abs-cont-Q0}
F \subset Q_0,\quad \w_L(F)=0 \quad \Longrightarrow \quad \sigma(F)=0. 
\end{align}

Consider then $F \subset Q_0$ with $\w_L(F)=0$. By the mutually absolute continuity between elliptic measures, one has 
$\w_L^{X_{Q_0}}(F )=0$. Lemma \ref{lem:square} applied to $F$ yields that there exists a Borel set $S \subset Q_0$ such that 
$u(X)=\w_L^{X}(S)$, $X\in\Omega$, satisfies 
\begin{align}\label{eq:S-lower}
S_{Q_0}^{\vartheta_0,\eta} u(x)=\infty,  \qquad \forall\,x \in F. 
\end{align}
To continue, we claim that there exist $\alpha_0>0$ and $r>0$ such that 
\begin{align}\label{eq:cone-cone}
\Gamma_{Q_0}^{\vartheta_0,\eta}(x) \subset \Gamma_{\alpha}^{r}(x), 
\qquad  \forall\,x \in Q_0 \text{ and }\forall\,\alpha \geq \alpha_0. 
\end{align}
Indeed, let $Y \in \Gamma_{Q_0}^{\vartheta_0,\eta}(x)$. By definition, there exist $Q \in \D_{Q_0}$ and 
$Q' \in \D_Q$ with $\ell(Q')>\eta^3 \ell(Q)$ such that $Y \in U_{Q'}^{\vartheta_0}$ and $x \in Q$. Then $Y \in I^*$ 
for some $I \in \W_{Q'}^{\vartheta_0,*}$, and hence, 
\begin{align}\label{eq:sidelength}
\delta(Y) \simeq \ell(I) \simeq \ell(Q')\le \ell(Q)<\eta^{-3}\ell(Q').
\end{align}
This further implies that 
\begin{multline}\label{eq:Yx}
|Y-x| \leq \diam(I^*) + \dist(I, x) \leq \diam(I^*) + \dist(I, Q') + \diam(Q) 
\\
\lesssim 2^{k^*}\ell(Q')+\ell(Q)
\lesssim \ell(Q), 
\end{multline}
where $k^*$ depends on the  
$1$-sided CAD constants (see Section \ref{section:dyadic}).
Combining \eqref{eq:sidelength} with \eqref{eq:Yx}, we get 
\begin{align*}
|Y-x| \leq C_1 \ell(Q_0) =: r/2 \quad \text{ and }\quad 
|Y-x| \leq (1+C_{1, \eta}) \delta(Y) =:(1+\alpha_0) \delta(Y), 
\end{align*}
where $C_1$ depends only on the allowable parameters, and $C_{1,\eta}$ depends only on the allowable 
parameters and also on $\eta$. Eventually, this justifies \eqref{eq:cone-cone}.

Combining \eqref{eq:S-lower},  \eqref{eq:cone-cone}, and \eqref{list:Sr}, we readily see that $\sigma(F)=0$ and \eqref{eq:abs-cont-Q0} follows. This completes the proof of  $\eqref{list:Sr}\Longrightarrow\eqref{list:wL}$ and hence that of Theorem \ref{thm:abs-cont}. \qed 

\section{Proof of Theorems \ref{thm:perturbation} and \ref{thm:wL-wLT}}\label{sec:perturbation}  

In order to prove Theorems \ref{thm:perturbation} and \ref{thm:wL-wLT}, we will make use of Theorem 
\ref{thm:abs-cont} and show that the truncated square function is finite $\sigma$-a.e.~for every bounded 
weak solution. Indeed, we are going to show the following more general result, which is a qualitative version of \cite[Theorem 4.13]{CHMT}.   

\begin{theorem}\label{thm:AAAD}
Let $\Omega \subset \R^{n+1}$, $n \ge 2$, be a $1$-sided CAD. There exists $\widetilde{\alpha}_0>0$ 
(depending only on the $1$-sided CAD constants) such that if $L_0 u = -\div(A_0 \nabla u)$ and 
$L_1 u = -\div(A_1 \nabla u)$ are real (not necessarily symmetric) elliptic operators such that $A_0-A_1=A+D$, where 
$A, D \in L^{\infty}(\Omega)$ are real matrices satisfying the following conditions: 
\begin{enumerate}
\item[$(i)$] there exist $\alpha_1 \geq \widetilde{\alpha}_0$ and $r_1>0$ such that
\begin{align}\label{eq:a(X)-delta}
\iint_{\Gamma_{\alpha_1}^{r_1}(x)} a(X)^2 \delta(X)^{-n-1}  dX < \infty, 
\qquad \sigma \text{-a.e. } x \in \partial \Omega, 
\end{align}
where 
\begin{align*}
a(X):=\sup_{Y \in B(X, \delta(X)/2)}|A(Y)|, \qquad X \in \Omega;
\end{align*}
\item[$(ii)$] $D \in \Lip_{\loc}(\Omega)$ is antisymmetric and there exist $\alpha_2 \geq \widetilde{\alpha}_0$ and $r_2>0$ such that 
\begin{align}\label{eq:divCD} 
\iint_{\Gamma_{\alpha_2}^{r_2}(x)} |\div_{C} D(X)|^2 \delta(X)^{1-n} dX < \infty, 
\quad \sigma \text{-a.e. } x \in \partial \Omega; 
\end{align}
\end{enumerate}
then $\sigma \ll \w_{L_0}$ if and only if $\sigma \ll \w_{L_1}$.  
\end{theorem}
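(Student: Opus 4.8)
The plan is to use Theorem~\ref{thm:abs-cont}, specifically the equivalence $\eqref{list:wL}\Leftrightarrow\eqref{list:Sr}$, to reduce the claimed stability of absolute continuity to a statement about the finiteness of truncated conical square functions. Concretely, it suffices to prove the following transfer principle: if $\sigma\ll\w_{L_0}$ (equivalently, by Theorem~\ref{thm:abs-cont}, $S_\alpha^r v<\infty$ $\sigma$-a.e.\ for every bounded weak solution $v$ of $L_0 v=0$ and all $r>0$), then $S_\alpha^r u<\infty$ $\sigma$-a.e.\ for every bounded weak solution $u$ of $L_1 u=0$; by symmetry of the hypotheses $(i)$--$(ii)$ in the roles of $L_0$ and $L_1$, this yields the equivalence. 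So fix a bounded weak solution $u$ of $L_1 u=0$ in $\Omega$.

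The key computation is a pointwise/local comparison between $|\nabla u|$ and the gradient of an associated $L_0$-solution, exploiting $A_0-A_1=A+D$. First I would record the identity, valid in the weak sense since $D$ is antisymmetric and locally Lipschitz, that $\div(D\nabla u)=-\div_C D\cdot\nabla u$ as distributions (the second-derivative terms cancel by antisymmetry), so that
\begin{align*}
L_0 u=-\div(A_0\nabla u)=-\div(A_1\nabla u)-\div((A+D)\nabla u)=-\div(A\nabla u)+\div_C D\cdot\nabla u
\end{align*}
in the weak sense, using $L_1 u=0$. Thus $u$ is a solution of $L_0$ with a right-hand side controlled by $a(X)|\nabla u(X)|$ (from the $A$-term, after an interior Caccioppoli estimate to pass from $A\nabla u$ to $a|\nabla u|$ on a slightly larger ball) plus $|\div_C D(X)||\nabla u(X)|$ (from the $D$-term). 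One then runs the standard localization on Whitney cubes: on each $I^*\in\W_Q^{\vartheta,*}$ write $u=u_1+u_2$ where $L_0 u_1=0$ and $u_2$ absorbs the error, apply interior estimates for $\nabla u_1$ and a Caccioppoli/energy estimate for $u_2$ driven by the two source terms above, and sum the resulting bounds against $\delta(X)^{1-n}$ over the cone. The outcome is the expected pointwise inequality of the form
\begin{align*}
S_\alpha^r u(x)^2\lesssim S_{\beta}^{cr}\Big(\text{an }L_0\text{-solution built from }u\Big)(x)^2+\iint_{\Gamma_{\beta}^{cr}(x)}\Big(a(X)^2\delta(X)^{-n-1}+|\div_C D(X)|^2\delta(X)^{1-n}\Big)|u|_{\infty}^2\,dX,
\end{align*}
with a possibly enlarged aperture $\beta$ and truncation $cr$; here $\|u\|_{L^\infty(\Omega)}$ controls $|\nabla u|$ on Whitney cubes via Caccioppoli, which is how the weighted $a^2$ and $|\div_C D|^2$ integrals enter. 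The first term on the right is finite $\sigma$-a.e.\ by the hypothesis $\sigma\ll\w_{L_0}$ together with Theorem~\ref{thm:abs-cont}~$\eqref{list:wL}\Rightarrow\eqref{list:Sr}$ (applied to the bounded $L_0$-solution in question, with aperture $\beta$ still admissible since $\beta\ge\alpha_0$ once $\widetilde\alpha_0$ is chosen large enough), while the second term is finite $\sigma$-a.e.\ precisely by assumptions \eqref{eq:a(X)-delta} and \eqref{eq:divCD} after relating $\Gamma_\beta^{cr}$ to the cones $\Gamma_{\alpha_1}^{r_1},\Gamma_{\alpha_2}^{r_2}$ (shrinking aperture only decreases the cone, and Remark~\ref{remark:truncations} handles changing the truncation radius). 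Hence $S_\alpha^r u<\infty$ $\sigma$-a.e., as needed; Theorems~\ref{thm:perturbation} and~\ref{thm:wL-wLT} then follow by specializing $A,D$ appropriately ($D=0$, $a=\varrho(A_0,A_1)$ for the former; $A=0$, $D=A-A^\top$, $A_0-A_1=\pm(A-A^\top)$ or $A-A^\top$ up to a factor for the latter) and checking $\div_C(A-A^\top)$ matches the displayed quantity.

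The main obstacle I anticipate is making the localized energy estimate genuinely rigorous while keeping track of apertures and the dependence $\vartheta=\vartheta(\alpha)$ inherited from the proof of Theorem~\ref{thm:abs-cont}: one must choose $\widetilde\alpha_0$ (and the enlarged aperture $\beta$) large enough that all the dyadic cone inclusions of the type $\Gamma_{Q_0}^{\vartheta_0,\eta}(x)\subset\Gamma_\alpha^r(x)$ and $\Gamma_\alpha^{r_0}(x)\subset\Gamma_{Q_0}^\vartheta(x)$ used in Section~\ref{sec:proof-abs} still go through, yet small enough (relative to $\alpha_1,\alpha_2$) that the weighted integrals over $\Gamma_\beta^{cr}$ are dominated by the finite integrals in \eqref{eq:a(X)-delta}--\eqref{eq:divCD}. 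A secondary technical point is the passage from the distributional identity $\div(D\nabla u)=-\div_C D\cdot\nabla u$ to something usable on each Whitney cube when $u$ is only $W^{1,2}_{\loc}$; this requires the local Lipschitz hypothesis on $D$ and a careful integration by parts against the solution of the homogeneous problem, essentially reproducing the argument of \cite[Theorem~4.13]{CHMT} in the present qualitative setting. Beyond these bookkeeping issues the proof is a fairly direct descent from the quantitative perturbation machinery to its qualitative, almost-everywhere analog.
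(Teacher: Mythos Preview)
Your overall strategy (reduce via Theorem~\ref{thm:abs-cont} to showing $S_{\alpha_0}^r u<\infty$ $\sigma$-a.e.\ for every bounded weak solution of $L_1u=0$) is exactly right, and the symmetry observation is correct. The gap is in the core inequality you display. The local decomposition $u=u_1+u_2$ on each Whitney cube $I^*$, with $L_0 u_1^I=0$ there, produces a \emph{different} $L_0$-solution on every cube; there is no single bounded $L_0$-solution $v$ in $\Omega$ for which your first term equals $S_\beta^{cr} v(x)^2$, so Theorem~\ref{thm:abs-cont} cannot be invoked on that term. If instead you try to bound each $u_1^I$ by the maximum principle and Caccioppoli, you only get $\iint_{I^*}|\nabla u_1^I|^2\,\delta^{1-n}\lesssim \|u\|_\infty^2$, and summing this over the infinitely many Whitney cubes in a cone diverges. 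A global splitting $u=v+w$ with $v$ the $L_0$-extension of the boundary trace of $u$ runs into the dual problem: one would need cone-local energy estimates for $w$, and those do not follow from the global energy identity for $L_0w=L_0u$.

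The paper's proof avoids this entirely and does \emph{not} manufacture any $L_0$-solution. Instead, the hypothesis $\sigma\ll\omega_{L_0}$ is used through the stopping-time construction of Section~\ref{sec:a-c} (families $\F_N$) to arrange $\mathcal{G}_{L_0}(X)\approx\delta(X)$ on a sawtooth $\Omega_{\F_N,Q_0}^\vartheta$, so that $\iint|\nabla u|^2\delta\lesssim_N\iint|\nabla u|^2\,\mathcal{G}_{L_0}$ on that sawtooth. The latter integral is then bounded by the integration-by-parts estimate of \cite[Proposition~4.18]{CHMT} (this is where the antisymmetry identity for $D$ and the $a^2,\ |\div_C D|^2$ terms enter), yielding
\[
\iint_{\Omega_{\F_N,Q_0}^\vartheta}|\nabla u|^2\,\delta
\ \lesssim_N\ \sigma(Q_0)+\iint_{\Omega^{\vartheta,*}_{\F_N,Q_0}}\frac{a^2}{\delta}+\iint_{\Omega^{\vartheta,*}_{\F_N,Q_0}}|\div_C D|^2\,\delta.
\]
The right-hand side is \emph{not} finite for free, so a second stopping-time family $\widetilde\F_N$ is introduced (stopping when $\sum_{Q\supset Q_j}\gamma_Q^\vartheta>N^2$, with $\gamma_Q^\vartheta$ the Whitney-region contribution of the $a^2$ and $|\div_C D|^2$ integrals), and the a.e.\ finiteness hypotheses \eqref{eq:a(X)-delta}--\eqref{eq:divCD} guarantee that the corresponding exceptional set has $\sigma$-measure zero. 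Working on the combined sawtooth for $\widehat\F_N=\F_N\cup\widetilde\F_N$ then gives $S_{Q_0}^\vartheta u\in L^2(\widehat E_N,\sigma)$ for each $N$, which is \eqref{list:Sr-L2} and hence \eqref{list:Sr}. The key object is thus the Green function of $L_0$, not an auxiliary $L_0$-solution; your reference to \cite[Theorem~4.13]{CHMT} points to the right machinery, but the mechanism is different from the one you sketched.
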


\begin{proof}
By symmetry, it suffices to assume that $\sigma \ll \w_{L_0}$ and prove $\sigma \ll \w_{L_1}$. Let 
$u \in W_{\loc}^{1,2}(\Omega) \cap L^{\infty}(\Omega)$ be a weak solution of $L_1u=0$ in $\Omega$ and 
$\|u\|_{L^{\infty}(\Omega)}=1$. Applying Theorem \ref{thm:abs-cont} $\eqref{list:Sr} \Rightarrow \eqref{list:wL}$ 
to $u$,  we are reduced to showing that for some $r>0$, 
\begin{align*}
S_{\alpha_0}^ru(x) < \infty, \qquad \text{for $\sigma $-a.e. } x \in \partial \Omega, 
\end{align*}
where $\alpha_0$ is given in Theorem 
\ref{thm:abs-cont}.
Proceeding as in Section \ref{sec:a-c} and invoking \eqref{eq:Sa-SQ}, it suffices to see that for every fixed 
$Q_0 \in \D_{k_0}$ and for some fixed large $\vartheta$ (which depends on $\alpha_0$ and hence solely on the 1-sided CAD constants) one has 
\begin{align}\label{eq:Q0-SQ0u-EN}
Q_0 = \bigcup_{N \geq 0} \widehat{E}_N,\quad \sigma(\widehat{E}_0)=0
\quad\text{and}\quad S_{Q_0}^\vartheta u \in L^2(\widehat{E}_N,\sigma), \ \forall\,N \geq 1.  
\end{align}

Fix then $Q_0 \in \D_{k_0}$, where $k_0$ is given in the beginning of Section \ref{sec:a-c}. We use the 
normalization in \eqref{eq:normalize} with $L=L_0$ and the family $\F_N$ of stopping cubes constructed in \eqref{eq:stopping}. Set
\begin{align*}
S_{Q_0}\gamma^\vartheta(x) := \bigg(\sum_{x \in Q \in \D_{Q_0}} \gamma_{Q} ^\vartheta\bigg)^{1/2},
\end{align*}
where for every $Q\in\D_{Q_0}$ we write
\begin{align*}
\gamma_Q^\vartheta:=
\iint_{U^{\vartheta, *}_Q} a(X)^2 \delta(X)^{-n-1}  dX 
+ \iint_{U^{\vartheta, *}_Q} |\div_{C} D(X)|^2 \delta(X)^{1-n} dX. 
\end{align*}
We claim that there exist $\widetilde{\alpha}_0>0$ and $\widetilde{r}>0$ such that  
\begin{equation}\label{eq:TQ-Tar}
\Gamma^{\vartheta,*}_{Q_0}(x)
:=\bigcup_{x \in Q \in \D_{Q_0}} U^{\vartheta, *}_{Q}
 \subset \Gamma_{\widetilde{\alpha}_0}^{\widetilde{r}}(x),\quad x \in \partial \Omega. 
\end{equation}
Indeed, let $Y \in \Gamma^{\vartheta,*}_{Q_0}(x)$. Then, there exists $Q \in \D_{Q_0}$ with $Q \ni x$ and $I \in \W^{\vartheta,*}_Q$ 
such that $Y \in I^{**}$. Using these, one has 
\begin{align*}
|Y-x| \leq \diam(I^{**}) + \dist(I, Q) + \diam(Q) \lesssim_\vartheta \ell(Q) \simeq_\vartheta \ell(I) \simeq \delta(Y), 
\end{align*}
which implies  
\begin{equation*}
|Y-x| < C_1 \delta(Y)=:(1+\widetilde{\alpha}_0) \delta(Y) \quad\text{and}\quad 
|Y-x| < C_2 \ell(Q_0)=C_2 2^{-k_0} =: \widetilde{r}, 
\end{equation*}
where both $C_1$ and $C_2$ depend only on the allowable parameters ---note that they depend on $\vartheta$, hence on the 1-sided CAD constants. Thus, \eqref{eq:TQ-Tar} holds for the choice 
of $\widetilde{\alpha}_0$ and $\widetilde{r}$, and as a result 
\begin{align}\label{eq:SQ0-a.e.}
&S_{Q_0}\gamma^\vartheta(x)^2 \quad\lesssim
\iint_{\Gamma^{\vartheta,*}_{Q_0}(x) } a(X)^2 \delta(X)^{-n-1}  dX 
+ \iint_{\Gamma^{\vartheta,*}_{Q_0}(x) } |\div_{C} D(X)|^2 \delta(X)^{1-n} dX 
\nonumber\\
&\quad\le  \iint_{\Gamma_{{\alpha}_1}^{\max\{\widetilde{r}, r_1\}}(x)} a(X)^2 \delta(X)^{-n-1}  dX 
+ \iint_{\Gamma_{{\alpha}_2}^{\max\{\widetilde{r},r_2\}}(x)} |\div_{C} D(X)|^2 \delta(X)^{1-n} dX 
\nonumber\\
&\quad< \infty, \quad \text{for $\sigma $-a.e.~} x \in  Q_0, 
\end{align}
where we have used that the fact that the family $\{U^{\vartheta,*}_Q\}_{Q \in \D}$ has bounded overlap, that  $\alpha_1, \alpha_2\ge \widetilde{\alpha}_0$ and the last estimate follows from \eqref{eq:a(X)-delta}, \eqref{eq:divCD} together with Remark \ref{remark:truncations}.

Given $N>C_0$ ($C_0$ is the constant that appeared in Section \ref{sec:a-c}), 
let $\widetilde{\F}_N\subset \D_{Q_0}$ be the collection of maximal cubes (with respect to the inclusion) $Q_j \in \D_{Q_0}$ such that 
\begin{align}\label{eq:FN-stopping}
\sum_{Q_j \subset Q \in \D_{Q_0}} \gamma_{Q}^\vartheta > N^2.
\end{align}
Observe that  
\begin{equation}\label{eq:Sa<N}
S_{Q_0}\gamma^\vartheta(x) \leq N, \qquad \forall\,x \in Q_0 \backslash \bigcup_{Q_j \in \widetilde{\F}_N}Q_j.  
\end{equation}
Otherwise, there exists a cube $Q_x\ni x$ such that 
$\sum_{Q_x \subset Q \in  \D_{Q_0}} \gamma_Q^\vartheta > N^2$, hence $x \in Q_x \subset Q_j$ for some 
$Q_j \in \widetilde{\F}_N$, which is a contradiction. 

We next set 
\begin{align}\label{eq:E0-tilde-measure:a}
\widetilde{E}_0 := 
\bigcap_{N>C_0} \widetilde{E}_N^0  
:=
\bigcap_{N>C_0} \bigg(\bigcup_{Q_j \in \widetilde{\F}_N} Q_j\bigg).  
\end{align}
Let $x \in \widetilde{E}^0_{N+1}$. Then there exists $Q_x \in \widetilde{\F}_{N+1}$ such that $x \in Q_x$. By \eqref{eq:FN-stopping}, one has
\begin{align*}
\sum_{Q_x \subset Q \in \D_{Q_0}} \gamma_{Q}^\vartheta > (N+1)^2>N^2.
\end{align*}
Therefore, the maximality of the cubes in $\widetilde{\F}_N$ gives that $Q_x \subset Q'_x$ for some $Q'_x \in \widetilde{\F}_N$ with $x \in Q'_x \subset \widetilde{E}_0^N$. This shows that $\{\widetilde{E}_N^0\}_N$ is a decreasing sequence of sets, and since $\widetilde{E}_N^0\subset Q_0$ for every $N$ we conclude that 
\begin{align*}
\omega(\widetilde{E}_0 )=\lim_{N\to\infty} \omega(\widetilde{E}_N^0),
\qquad
\sigma(\widetilde{E}_0 )=\lim_{N\to\infty} \sigma(\widetilde{E}_N^0).
\end{align*}
Note that for every $N>C_0$, if $x \in \widetilde{E}_0$ there exists $Q_x^{N} \in \widetilde{\F}_N$ such that 
$Q_x^N \ni x$. By the definition of $\widetilde{\F}_N$, we have 
\begin{align*}
S_{Q_0}\gamma^\vartheta(x)^2 = \sum_{x \in Q \in \D_{Q_0}} \gamma_Q^\vartheta 
\geq  \sum_{Q_x^N \subset Q \in \D_{Q_0}} \gamma_Q^\vartheta > N^2, 
\end{align*}
and, therefore,
 
\begin{multline}\label{eaf4a3f4}
\sigma(\widetilde{E}_0 )
=
\lim_{N\to\infty} \sigma(\widetilde{E}_N^0)
\le \lim_{N\to\infty}  \sigma(\{x \in Q_0: S_{Q_0}\gamma^\vartheta(x)>N\})
\\=\sigma(\{x \in Q_0: S_{Q_0}\gamma^\vartheta(x)=\infty\})=0,
\end{multline}
by \eqref{eq:SQ0-a.e.}.  

To proceed, let $\widehat{\F}_N$ be the collection of maximal, hence pairwise disjoint, cubes in $\F_N \cup \widetilde{\F}_N$. Note that $\D_{\widehat{\F}_N, Q_0} \subset \D_{\F_N, Q_0} \cap \D_{\widetilde{\F}_N, Q_0}$.  
This along with \eqref{eq:NN} yields 
\begin{align}\label{eq:NN-Fhat}
\frac{1}{N}\leq \frac{\w(Q)}{\sigma(Q)} \leq N, \quad \forall\,Q \in \D_{\widehat{\F}_N, Q_0}. 
\end{align}
We next set 
\begin{align}\label{eq:E0-tilde-measure:aaa}
\widehat{E}_0 := 
\bigcap_{N>C_0} \widehat{E}_N^0  
:=
\bigcap_{N>C_0	} \bigg(\bigcup_{Q_j \in \widehat{\F}_N} Q_j\bigg).  
\end{align}
Note that $\widehat{\F}_N\subset\F_N \cup \widetilde{\F}_N$ and also that if $Q\in \F_N \cup \widetilde{\F}_N$ then there exists $Q'\in \F_N \cup \widetilde{\F}_N$ so that $Q\subset Q'$. This shows that
$\widehat{E}_N^0  = {E}_N^0  \cup \widetilde{E}_N^0$, where ${E}_N^0$ and $\widetilde{E}_N^0$ are defined in \eqref{eq:E0N-EN} and \eqref{eq:E0-tilde-measure:a} respectively. As we showed that $\{E_N^0\}_N$ and $\{\widetilde{E}_N^0\}_N$ are decreasing sequence of sets, then so is $\{\widehat{E}_N^0\}_N$. This together with the fact that $\widehat{E}_N^0\subset Q_0$ lead to 
\begin{align*}
\sigma(\widehat{E}_0)
=\lim_{N \to\infty}  \sigma(\widehat{E}_N^0)
\le\lim_{N \to\infty}  \sigma({E}_N^0)
+\lim_{N \to\infty}  \sigma(\widetilde{E}_N^0)
=0, 
\end{align*}
as shown in \eqref{34fravcrv} and \eqref{eaf4a3f4}, hence $\sigma(\widehat{E}_0)=0$.

Next we write  
\begin{align}\label{eq:Q0-Ehat}
Q_0=
\widehat{E}_0\cup \bigg(\bigcup_{N>C_0} \widehat{E}_N \bigg)
:= 
\widehat{E}_0\cup \bigg(\bigcup_{N>C_0} (Q_0\setminus \widehat{E}_N^0) \bigg).
\end{align}
Therefore, to get \eqref{eq:Q0-SQ0u-EN}, we are left with proving
\begin{equation}\label{eq:SQ0u-EN-tilde}
S_{Q_0}^\vartheta u \in L^2(\widehat{E}_N, \sigma), \quad \forall\,N>C_0. 
\end{equation} 
With this goal in mind,  we apply \eqref{eq:NN-Fhat}, 
\eqref{eq:Q0-Ehat} and proceed as in the proof of \eqref{eq:SQk-ENk} and \eqref{eq:u-G}, to conclude that
\begin{align}\label{eq:S-delta-G}
\int_{\widehat{E}_N} S_{Q_0}^\vartheta u(x)^2 d\sigma(x) 
\lesssim \iint_{\Omega_{\widehat{\F}_N, Q_0}^\vartheta}  |\nabla u|^2 \delta \ dY 
\lesssim_N \iint_{\Omega_{\widehat{\F}_{N}, Q_0}^\vartheta}  |\nabla u|^2 \G \ dY.  
\end{align}
As in Section \ref{sec:a-c}, for every $M \geq 1$, we consider the pairwise disjoint collection $\widehat{\F}_{N,M}$ that is the family of maximal cubes of the collection $\widehat{\F}_N$ augmented by adding all the cubes 
$Q \in \D_{Q_0}$ such that $\ell(Q) \leq 2^{-M} \ell(Q_0)$. In particular, $Q \in \D_{\widehat{\F}_{N,M}, Q_0}$ 
if and only if $Q \in \D_{\widehat{\F}_N, Q_0}$ and $\ell(Q)>2^{-M}\ell(Q_0)$.  Moreover, 
$\D_{\widehat{\F}_{N,M}, Q_0} \subset \D_{\widehat{\F}_{N,M'}, Q_0}$ for all $M \leq M'$, and hence 
$\Omega_{\widehat{\F}_{N,M}, Q_0}^\vartheta \subset \Omega_{\widehat{\F}_{N,M'}, Q_0}^\vartheta \subset \Omega_{\widehat{\F}_N, Q_0}^\vartheta$.  Then the monotone convergence theorem implies 
\begin{align}\label{eq:JM-lim}
\iint_{\Omega_{\widehat{\F}_{N}, Q_0}^\vartheta}  |\nabla u|^2 \G \ dY 
=\lim_{M \to \infty}\iint_{\Omega_{\widehat{\F}_{N,M}, Q_0}^\vartheta}  |\nabla u|^2 \G \ dY
=:
\lim_{M \to \infty} \mathcal{J}_M.
\end{align} 
To continue with the proof, we are going to follow \cite[Proof of Proposition 4.18]{CHMT}. Let $\Psi\in C^\infty_c(\R^{n+1})$ be the smooth cut-off function associated with the sawtooth domain $\Omega_{\widehat{\F}_{N, M}, Q_0}^\vartheta$ (see \cite[Lemma 3.61]{CHMT} or \cite[Lemma 4.44]{HMT1}) and note that since $\Psi\gtrsim 1$ in $\Omega_{\widehat{\F}_{N,M}}^{\vartheta}$ we have
\begin{align}\label{eq:JM-lim-tilde}
\mathcal{J}_M \lesssim \widetilde{\mathcal{J}}_M 
:= \iint_{\Omega}  |\nabla u|^2 \G \Psi^2\ dY. 
\end{align}
Note that $\widetilde{\mathcal{J}}_M<\infty$ because $\supp \Psi\subset \overline{\Omega_{\widehat{\F}_{N, M}, Q_0}^{\vartheta,*}} \subset \Omega$ 
and $u \in W^{1,2}_{\loc}(\Omega)$. A careful examination of \cite[Proof of Proposition 4.18]{CHMT} gives
\begin{multline*}
\widetilde{\mathcal{J}}_M \lesssim_N \sigma(Q_0) +  \widetilde{\mathcal{J}}_M^{\frac12} 
\left(\iint_{\Omega^{\vartheta,*}_{\widehat{\F}_{N,M}, Q_0}} \frac{a(X)^2}{\delta(X)} dX \right)^{\frac12}
\\
+ \widetilde{\mathcal{J}}_M^{\frac12} \sigma(Q_0)^{\frac12} 
+ \sigma(Q_0)^{\frac12} 
\left(\iint_{\Omega^{\vartheta,*}_{\widehat{\F}_{N,M}, Q_0}} |\div_C D(X)|^2 \delta(X) dX \right)^{\frac12}. 
\end{multline*}
In turn, applying Young's inequality and hiding, we readily get 
\begin{align}\label{eq:JM}
\widetilde{\mathcal{J}}_M \lesssim_N \sigma(Q_0) + 
\iint_{\Omega^{\vartheta,*}_{\widehat{\F}_N, Q_0}} \frac{a(X)^2}{\delta(X)} dX 
+ \iint_{\Omega^{\vartheta,*}_{\widehat{\F}_N, Q_0}} |\div_C D(X)|^2 \delta(X) dX, 
\end{align}
where the implicit constant is independent of $M$. 
Collecting \eqref{eq:S-delta-G}, \eqref{eq:JM-lim}, \eqref{eq:JM-lim-tilde}, and \eqref{eq:JM}, we obtain  
\begin{align}\label{eq:S-dis}
&\int_{\widehat{E}_N} S_{Q_0}^\vartheta u(x)^2 d\sigma(x) 
\lesssim \sigma(Q_0) + \iint_{\Omega^{\vartheta,*}_{\widehat{\F}_N, Q_0}} \frac{a(X)^2}{\delta(X)} dX 
+ \iint_{\Omega^{\vartheta,*}_{\widehat{\F}_N, Q_0}} |\div_C D(X)|^2 \delta(X) dX
\nonumber \\
&\qquad\leq\sigma(Q_0) + \sum_{Q \in \D_{\widehat{\F}_N, Q_0}} 
\bigg(\iint_{U^{\vartheta,*}_Q} \frac{a(X)^2}{\delta(X)} dX
+
\iint_{U^{\vartheta,*}_Q} |\div_C D(X)|^2 \delta(X)dX\bigg)
\nonumber \\
&\qquad\lesssim \sigma(Q_0) + \sum_{Q \in \D_{\widehat{\F}_N, Q_0}} \gamma_Q^\vartheta \sigma(Q),  
\end{align}
where we used that $\sigma(Q) \simeq \ell(Q)^n \simeq \delta(X)^n$ for every $X \in U^{\vartheta,*}_Q$. 
On the other hand, 
\begin{multline}\label{eq:dis}
\sum_{Q \in \D_{\widehat{\F}_N, Q_0}} \gamma_Q^\vartheta \sigma(Q) 
=\int_{Q_0} \sum_{x \in Q \in \D_{\widehat{\F}_N, Q_0}} \gamma_Q^\vartheta d\sigma(x)
\\
\leq \int_{\widehat{E}_N} S_{Q_0} \gamma^\vartheta(x)^2 d\sigma(x) 
+ \sum_{Q_j \in \widehat{\F}_N} \sum_{Q \in \D_{\widehat{\F}_N, Q_0}} \gamma_Q^\vartheta\,\sigma(Q\cap Q_j).  
\end{multline}
As observed above $\widetilde{E}_N^0\subset \widehat{E}_N^0$, hence, \eqref{eq:Sa<N} leads to 
\begin{align}\label{eq:dis-1}
\int_{\widehat{E}_N} S_{Q_0} \gamma^\vartheta(x)^2 d\sigma(x) 
\leq N^2 \sigma(Q_0).  
\end{align}
In order to control the second term in \eqref{eq:dis},  we fix $Q_j \in \widehat{\F}_N$. Note that if 
$Q \in \D_{\widehat{\F}_N, Q_0}$ is so that
$Q \cap Q_j \neq \emptyset$ then necessarily $Q_j \subsetneq Q$.
Write $\widehat{Q}_j$ for the dyadic father of $Q_j$, that is,  $\widehat{Q}_j$ is the unique dyadic cube containing $Q_j$ 
with $\ell(\widehat{Q}_j)=2\ell(Q_j)$. 
We claim that 
\begin{equation}\label{eq:QjQ-N2}
\sum_{\widehat{Q}_j \subset Q \in \D_{Q_0}} \gamma_Q^\vartheta  =\sum_{Q_j \subsetneq Q \in \D_{Q_0}} \gamma_Q^\vartheta \leq N^2. 
\end{equation}
Otherwise, recalling the construction of $\widetilde{\F}_N$ in \eqref{eq:FN-stopping}, it follows that $\widehat{Q}_j \subset Q'$ for some $Q' \in \widetilde{\F}_N$. From the definition of $\widehat{\F}_N$, we then have that $Q' \subset Q''$ for some $Q'' \in  \widehat{\F}_N$. Consequently, $Q_j \subsetneq Q''$ with $Q_j, Q'' \in \widehat{\F}_N$ contradicting the maximality of the family $\widehat{\F}_N$. Then it follows from \eqref{eq:QjQ-N2} that 
\begin{multline}\label{43r43r4}
\sum_{Q_j \in \widehat{\F}_N} \sum_{Q \in \D_{\widehat{\F}_N, Q_0}} \gamma_Q^\vartheta \sigma(Q \cap Q_j)  
=\sum_{Q_j \in \widehat{\F}_N} \sigma(Q_j)  \sum_{Q_j \subsetneq Q \in \D_{Q_0}} \gamma_Q^\vartheta 
\\
\leq N^2 \sum_{Q_j \in \widehat{\F}_N} \sigma(Q_j) 
\leq N^2 \sigma \bigg(\bigcup_{Q_j \in \widehat{\F}_N} Q_j \bigg) 
\leq N^2 \sigma(Q_0).   
\end{multline}
Collecting \eqref{eq:S-dis}, \eqref{eq:dis}, \eqref{eq:dis-1}, and \eqref{43r43r4}, we deduce that  
\begin{align*}
\int_{\widehat{E}_N} S_{Q_0}u(x)^2 d\sigma(x) 
\leq C_N \sigma(Q_0)  \simeq C_N 2^{-k_0 n}.
\end{align*}
This shows \eqref{eq:SQ0u-EN-tilde} and completes the proof of Theorem \ref{thm:AAAD}. 
\end{proof}

Now let us see how we deduce Theorems \ref{thm:perturbation} and \ref{thm:wL-wLT} from Theorem \ref{thm:AAAD}. 

\begin{proof}[Proof of Theorem \ref{thm:perturbation}]
Let $L_0$ and $L_1$ be the elliptic operators given in Theorem \ref{thm:perturbation}. If we take $A=A_0-A_1$ and $D=0$ in Theorem \ref{thm:AAAD}, then \eqref{eq:a(X)-delta} coincides with the assumption \eqref{eq:rhoAA} and \eqref{eq:divCD} holds automatically. Therefore, Theorem \ref{thm:perturbation} immediately follows from Theorem \ref{thm:AAAD}.  
\end{proof}

\begin{proof}[Proof of Theorem \ref{thm:wL-wLT}]
Let $A$ be the matrix as stated in Theorem \ref{thm:wL-wLT}. If we take $A_0=A$, $A_1=A^{\top}$, $\widetilde{A}=0$ and 
$D=A-A^{\top}$ in Theorem \ref{thm:AAAD}, then one has $A_0-A_1=\widetilde{A}+D$ with $D \in \Lip_{\loc}(\Omega)$ antisymmetric, \eqref{eq:a(X)-delta} holds trivially and  \eqref{eq:divCD} agrees with \eqref{eq:divCAA}. Thus, Theorem \ref{thm:AAAD} implies that $\sigma \ll \w_L$ if and only if $\sigma \ll \w_{L^{\top}}$. 

Similarly, the conclusion that $\sigma \ll \w_L$ if and only if $\sigma \ll \w_{L^{\rm sym}}$ follows if we set $A_0=A$, $A_1=(A+A^{\top})/2$, $\widetilde{A}=0$ and $D=(A-A^{\top})/2$. 
\end{proof}

\appendix

\section{Extending the construction of Kenig, Kirchheim, Pipher, Toro: Proof of Lemma~\ref{lem:square}}\label{appendix:square}

In this appendix we prove Lemma~\ref{lem:square}. We will follow the construction in \cite[Section 3]{CHMT}  which in turn extends that of \cite{KKiPT} (see also \cite{KKoPT}). In those scenarios the set $F$ is sufficiently small, that is, it satisfies $\w_L^{X_{Q_0}}(F)\le \beta\w_L^{X_{Q_0}}Q_0)$ and it is shown that there is a set $S_\beta$ so that $u_\beta(X)=\omega_{L}^X(S_\beta)$, $X\in\Omega$, satisfies $S_{Q_0}^{\vartheta_0,\eta} u(x)^2 \gtrsim_\eta \log(\beta^{-1})$ for every $x\in F$. Here we obtain the limiting case $\beta=0$.

We start with some definition and some auxiliary result: 

\begin{definition}\label{def:good-cover}
	Let $E \subset \R^{n+1}$ be an $n$-dimensional ADR set. Fix $Q_0 \in \D(E)$ (cf.~Lemma~\eqref{lem:dyadic}) and let $\mu$ be a regular Borel measure on $Q_0$.  Given $\varepsilon_0 \in (0,1)$ and a Borel set $\emptyset\neq F \subset Q_0$, a good $\varepsilon_0$-cover of $F$ with respect to $\mu$, of length $k \in \N$, is a collection $\{\mathcal{O}_{\ell}\}_{\ell=1}^k$ of Borel subsets of $Q_0$, together with pairwise disjoint families $\mathcal{F}_{\ell} =\{Q^{\ell}\} \subset \D_{Q_0}$, $1\le\ell \le  k$, such that the following hold:
	\begin{list}{\textup{(\theenumi)}}{\usecounter{enumi}\leftmargin=1cm \labelwidth=1cm \itemsep=0.2cm 
			\topsep=.2cm \renewcommand{\theenumi}{\alph{enumi}}}
		
		\item $F \subset \mathcal{O}_k \subset \mathcal{O}_{k-1} \subset \dots \subset \mathcal{O}_2 \subset \mathcal{O}_1 \subset Q_0$. 
		
		\item $\mathcal{O}_{\ell}=\bigcup_{Q^{\ell} \in \F_{\ell}} Q^{\ell}$, for every $1 \le \ell \le k$.
		
		\item $\mu(\mathcal{O}_{\ell} \cap Q^{\ell-1}) \le \varepsilon_0 \mu(Q^{\ell-1})$,  for each $Q^{\ell-1} \in \F_{\ell-1}$ and $2\le \ell \le k$. 
		
	\end{list}
Analogously,  a good $\varepsilon_0$-cover of $F$ with respect to $\mu$, of length $\infty$, is a collection $\{\mathcal{O}_{\ell}\}_{\ell=1}^\infty$ of Borel subsets of $Q_0$, together with pairwise disjoint families $\mathcal{F}_{\ell} =\{Q^{\ell}\} \subset \D_{Q_0}$, $\ell\ge 1$, such that the following hold:
\begin{list}{\textup{(\theenumi)}}{\usecounter{enumi}\leftmargin=1cm \labelwidth=1cm \itemsep=0.2cm 
		\topsep=.2cm \renewcommand{\theenumi}{\alph{enumi}}}
	
	\item $F \subset\dots\subset \mathcal{O}_k \subset \mathcal{O}_{k-1} \subset \dots \subset \mathcal{O}_2 \subset \mathcal{O}_1 \subset Q_0$. 
	
	\item $\mathcal{O}_{\ell}=\bigcup_{Q^{\ell} \in \F_{\ell}} Q^{\ell}$, for every $\ell\ge 1$,
	
	\item $\mu(\mathcal{O}_{\ell} \cap Q^{\ell-1}) \le \varepsilon_0 \mu(Q^{\ell-1})$,  for each $Q^{\ell-1} \in \F_{\ell-1}$ and $\ell \ge 2$. 
	
\end{list}
\end{definition}

\medskip

\begin{remark}\label{remark:good-cover-iter}
In the previous definition we implicitly assume that $F\cap Q^\ell\neq\emptyset$ for every  $Q^{\ell}\in\F_\ell$ and for all $1\le\ell\le k$ if the length is $k$, or all $\ell\ge 1$ if the length is infinity. Otherwise, we can remove all the cubes $Q^\ell$ for which $F\cap Q^\ell=\emptyset$, and all the required conditions clearly hold. 

Observe also that if $\{\mathcal{O}_{\ell}\}_{\ell=1}^k$ is a good $\varepsilon_0$-cover of $F$ then, by Definition \ref{def:good-cover},  we have  for every $2\le \ell\le k$
	\begin{align*}
		\mu(\mathcal{O}^1_{\ell}) = \mu(\mathcal{O}^1_{\ell} \cap \mathcal{O}^1_{\ell-1}) = \sum_{Q \in \F^1_{\ell-1}} \mu(\mathcal{O}^1_{\ell} \cap Q)  
		\le \varepsilon_0 \sum_{Q \in \F^1_{\ell-1}} \mu(Q) = \varepsilon_0 \mu(\mathcal{O}^1_{\ell-1}). 
	\end{align*}
	Iterating this for every  $2\le \ell\le k$ we conclude that
	\begin{align}\label{eq:OO}
		\mu(\mathcal{O}^1_{\ell}) 
		\le 
		\varepsilon_0 \mu(\mathcal{O}^1_{\ell-1})
		\le
		\varepsilon_0^2 \mu(\mathcal{O}^1_{\ell-2}) 
		\le 
		\dots 
		\le \varepsilon_0^{\ell-1} \mu(\mathcal{O}^1_1) \le \varepsilon_0^{\ell-1} \mu(Q_0).
	\end{align}
	\end{remark}
	
\medskip

\begin{lemma}[{\cite[Lemma~3.5]{CHMT}}]\label{lem:cover}
	Let $E \subset \R^{n+1}$ be an $n$-dimensional ADR set. Let $\mu$ be a regular Borel measure on $Q_0$ and assume that it is dyadically doubling on $Q_0$, that is, there exists $C_{\mu} \ge 1$ such that $\mu(Q^*) \le C_{\mu} \mu(Q)$ for every $Q \in \D_{Q_0} \setminus \{Q_0\}$, with $Q^* \supset Q$ and $\ell(Q^*) = 2\ell(Q)$ (i.e., $Q^*$ is the ``dyadic parent" of $Q$). For every $0<\varepsilon_0 \le e^{-1}$, if $\emptyset\neq F \subset Q_0$ with $\mu(F) \le \alpha \mu(Q_0)$ and $0<\alpha \le \varepsilon_0^2/(2C_{\mu}^2)$ then $F$ has a good $\varepsilon_0$-cover with respect to $\mu$ of length $k_0=k_0(\alpha, \varepsilon_0, C_\mu) \in \N$, $k_0 \ge 2$, which satisfies $k_0 \approx \frac{\log \alpha^{-1}}{\log \varepsilon_0^{-1}}$. In particular, if $\mu(F)=0$, then $F$ has a good $\varepsilon_0$-cover of arbitrary length. 
\end{lemma}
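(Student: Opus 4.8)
The plan is to prove the finite-length statement first, since the infinite-length (i.e.\ $\mu(F)=0$) case follows by a simple exhaustion argument once the finite case is in hand. So assume $\emptyset\neq F\subset Q_0$ with $\mu(F)\le\alpha\mu(Q_0)$ and $0<\alpha\le\varepsilon_0^2/(2C_\mu^2)$, and fix $0<\varepsilon_0\le e^{-1}$. The first step is to set up the layers by a standard Calder\'on--Zygmund stopping-time argument on the measure $\mu$ restricted to $Q_0$. To construct $\mathcal{O}_1$, I would look at the dyadic maximal function of the density of $\mu|_F$ with respect to $\mu$ (or more precisely run a stopping time on the quantity $\mu(F\cap Q)/\mu(Q)$) at a threshold $\tau$ to be chosen: let $\F_1=\{Q^1\}$ be the maximal subcubes $Q\in\D_{Q_0}$ with $\mu(F\cap Q)>\tau\,\mu(Q)$, and put $\mathcal{O}_1=\bigcup_{Q^1\in\F_1}Q^1$. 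Since $\mu(F)\le\alpha\mu(Q_0)$, one has $\mu(\mathcal{O}_1)\le\tau^{-1}\mu(F)\le(\alpha/\tau)\mu(Q_0)$, and by dyadic doubling $\mu(F\cap Q^1)/\mu(Q^1)\le C_\mu\tau$ for each stopping cube (the parent does not stop, giving the reverse bound $\le\tau$, then lose a factor $C_\mu$ passing to the child), while $F\subset\mathcal{O}_1$ up to a $\mu$-null set because at $\mu$-a.e.\ point of $F$ the ratio $\mu(F\cap Q)/\mu(Q)$ tends to $1>\tau$ along $\D_{Q_0}\ni Q\searrow\{x\}$ by the Lebesgue differentiation theorem for dyadic cubes; discarding that null set (or noting $F$ can be replaced by its intersection with the Lebesgue set) we get $F\subset\mathcal{O}_1$.

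The second step is to iterate this relative to each stopping cube of the previous generation. Given $\F_{\ell-1}$, for each $Q^{\ell-1}\in\F_{\ell-1}$ run the same stopping time inside $Q^{\ell-1}$ at the same threshold, collecting the maximal subcubes where $\mu(F\cap Q)>\tau\mu(Q)$; let $\F_\ell$ be the union over $Q^{\ell-1}$ of these, and $\mathcal{O}_\ell=\bigcup_{Q^\ell\in\F_\ell}Q^\ell$. Nesting $\mathcal{O}_\ell\subset\mathcal{O}_{\ell-1}$ is automatic since each new cube sits inside some $Q^{\ell-1}$. The key quantitative point is condition (c): for a fixed $Q^{\ell-1}\in\F_{\ell-1}$ we have, exactly as for the top layer,
\begin{equation*}
\mu(\mathcal{O}_\ell\cap Q^{\ell-1})\le\tau^{-1}\mu(F\cap Q^{\ell-1})\le\tau^{-1}\cdot C_\mu\tau\,\mu(Q^{\ell-1})=C_\mu\,\mu(Q^{\ell-1}),
\end{equation*}
which is not yet $\le\varepsilon_0\mu(Q^{\ell-1})$. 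To fix this I would instead iterate the single-step stopping time a bounded number $m$ of times between consecutive labelled layers — i.e.\ interpose $m$ generations of the $\tau$-stopping time so that the ratio $\mu(F\cap Q)/\mu(Q)$ on the resulting cubes is forced down by a factor $\tau^{m}$ while the total measure shrinks by $(C_\mu\tau)^{m}$ at each relabelling step; choosing $\tau$ and $m$ so that $(C_\mu\tau)^m\le\varepsilon_0$ (e.g.\ $\tau=\varepsilon_0/C_\mu\le e^{-1}$, $m=1$ already works once one is careful: $\mu(\mathcal{O}_\ell\cap Q^{\ell-1})\le\tau^{-1}\mu(F\cap Q^{\ell-1})$ and $\mu(F\cap Q^{\ell-1})\le\tau\,\mu(\widehat{Q^{\ell-1}})$ with $\widehat{Q^{\ell-1}}$ the non-stopping parent, which by doubling is $\le C_\mu\tau\mu(Q^{\ell-1})$, so pick $\tau=\varepsilon_0/C_\mu$, giving the ratio $\le\varepsilon_0$) yields (c). The stopping time terminates once $\mu(F\cap Q^{\ell-1})=0$ or once $\mathcal{O}_\ell=\emptyset$; combining (c) iteratively as in Remark~\ref{remark:good-cover-iter} gives $\mu(\mathcal{O}_\ell)\le\varepsilon_0^{\ell-1}\mu(Q_0)$, and since $F\subset\mathcal{O}_\ell$ (up to the fixed null set) forces $\mu(F)\le\varepsilon_0^{\ell-1}\mu(Q_0)$, the process survives at least $k_0$ steps where $\varepsilon_0^{k_0-1}\ge\mu(F)/\mu(Q_0)$, but also not indefinitely unless $\mu(F)=0$; unwinding, $k_0\approx\log\alpha^{-1}/\log\varepsilon_0^{-1}$. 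The hypothesis $\alpha\le\varepsilon_0^2/(2C_\mu^2)$ guarantees $k_0\ge2$.

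The third step handles the limiting case $\mu(F)=0$: now for every $\ell$ one has $\mu(F\cap Q^{\ell-1})=0$, so the stopping time never stops for lack of mass and we may run it for arbitrarily many generations, producing a good $\varepsilon_0$-cover of any prescribed finite length $k$ — equivalently, of length $\infty$ by concatenating. Throughout, the only subtlety is the bookkeeping in the previous paragraph: one must be careful that the $\tau$-threshold in the stopping-time, the doubling loss $C_\mu$ incurred when passing from a stopping cube to its non-stopping parent, and the target ratio $\varepsilon_0$ are reconciled simultaneously, and that $F$ is contained in every $\mathcal{O}_\ell$ only after discarding a single $\mu$-null exceptional set coming from the dyadic Lebesgue differentiation theorem. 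I expect this reconciliation of constants to be the main (though entirely routine) obstacle; everything else is the standard nested Calder\'on--Zygmund construction. I would remark that this is precisely \cite[Lemma~3.5]{CHMT} and simply cite it, but the above is the self-contained argument.
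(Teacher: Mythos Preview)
The paper does not prove this lemma --- it is quoted verbatim from \cite[Lemma~3.5]{CHMT} and used as a black box --- so there is no in-paper argument to compare against. Your overall strategy (nested Calder\'on--Zygmund stopping times on the density $\mu(F\cap Q)/\mu(Q)$) is the right one, but the iteration as written does not work. Once $Q^{\ell-1}\in\F_{\ell-1}$ has been selected it already satisfies $\mu(F\cap Q^{\ell-1})>\tau\,\mu(Q^{\ell-1})$, so re-running the \emph{same} threshold $\tau$ inside $Q^{\ell-1}$ selects $Q^{\ell-1}$ itself as the maximal stopping cube and produces $\mathcal{O}_\ell=\mathcal{O}_{\ell-1}$; the construction stalls after one step. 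Your attempted patch contains an arithmetic slip: chaining $\mu(\mathcal{O}_\ell\cap Q^{\ell-1})\le\tau^{-1}\mu(F\cap Q^{\ell-1})$ with $\mu(F\cap Q^{\ell-1})\le C_\mu\tau\,\mu(Q^{\ell-1})$ yields $\mu(\mathcal{O}_\ell\cap Q^{\ell-1})\le C_\mu\,\mu(Q^{\ell-1})$, the two factors of $\tau$ cancelling. No choice of $\tau$ can bring this below $\varepsilon_0<1$, so ``$\tau=\varepsilon_0/C_\mu$, $m=1$'' does not give condition~(c).

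The standard fix (this is how the argument in \cite{KKoPT, KKiPT, CHMT} actually runs) is to use \emph{geometrically increasing} thresholds: set $\tau_\ell:=(C_\mu/\varepsilon_0)^{\ell-1}\tau_1$ with $\tau_1\approx\alpha$, and let $\F_\ell$ be the family of maximal cubes $Q\in\D_{Q_0}$ with $\mu(F\cap Q)>\tau_\ell\,\mu(Q)$. Then each $Q^\ell\in\F_\ell$ is a \emph{strict} subcube of some $Q^{\ell-1}\in\F_{\ell-1}$ (since the density on $Q^{\ell-1}$ is $\le C_\mu\tau_{\ell-1}<\tau_\ell$), the nesting $\mathcal{O}_\ell\subset\mathcal{O}_{\ell-1}$ follows, and condition~(c) becomes
\[
\mu(\mathcal{O}_\ell\cap Q^{\ell-1})\le\tau_\ell^{-1}\mu(F\cap Q^{\ell-1})\le\tau_\ell^{-1}\,C_\mu\tau_{\ell-1}\,\mu(Q^{\ell-1})=\varepsilon_0\,\mu(Q^{\ell-1}).
\]
The process continues while $\tau_\ell<1$, i.e.\ for $k_0\approx\log\alpha^{-1}/\log(C_\mu\varepsilon_0^{-1})\approx\log\alpha^{-1}/\log\varepsilon_0^{-1}$ generations, and the inclusion $F\subset\mathcal{O}_\ell$ (modulo a fixed $\mu$-null set) comes from the dyadic Lebesgue differentiation theorem exactly as you said. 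The $\mu(F)=0$ case and the bookkeeping you describe are then handled correctly.
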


We would like to mention that in the case $\mu(F)=0$ this result gives an $\varepsilon_0$-cover of arbitrary length. Our goal is to show that in such an scenario one can iterate the construction and construct an $\varepsilon_0$-cover of infinite length:
\begin{lemma}\label{lem:infinite}
	Let $E \subset \R^{n+1}$ be an $n$-dimensional ADR set and fix $Q_0 \in \D(E)$. Let $\mu$ be a regular Borel measure on $Q_0$ and assume that it is dyadically doubling on $Q_0$. For every $0<\varepsilon_0 \le e^{-1}$, if $\emptyset\neq F \subset Q_0$ with $\mu(F)=0$, then $F$ has a good $\varepsilon_0$-cover of length $\infty$. 
\end{lemma}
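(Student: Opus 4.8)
The plan is to obtain the infinite good cover by iterating the finite construction of Lemma~\ref{lem:cover}, restarting it inside each cube of the current top generation. The point that makes this work is that being a good $\varepsilon_0$-cover is a \emph{local} property: Lemma~\ref{lem:cover} applies verbatim inside any dyadic subcube $Q\in\D_{Q_0}$ (with $\mu$ replaced by its restriction to $Q$), and when $\mu(F\cap Q)=0$ it produces a cover of any finite length; moreover, by Remark~\ref{remark:good-cover-iter} (cf.~\eqref{eq:OO}) the \emph{second} level $\mathcal{O}^Q_2$ of such an inner cover is a union of subcubes of $Q$ with $\mu(\mathcal{O}^Q_2)\le\varepsilon_0\,\mu(Q)$. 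This decay is exactly condition (c) of Definition~\ref{def:good-cover}, so $\mathcal{O}^Q_2$ can serve as ``the next level'' of the global cover above $Q$.

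Concretely I would argue by induction on the length. As the base case take $\mathcal{O}_1:=Q_0$ and $\F_1:=\{Q_0\}$ (legitimate since $F\neq\emptyset$), which trivially realizes conditions (a), (b) up to level $1$, with $F\cap Q\neq\emptyset$ for each $Q\in\F_1$. For the inductive step, suppose $\mathcal{O}_1\supset\cdots\supset\mathcal{O}_j$ and pairwise disjoint families $\F_1,\dots,\F_j\subset\D_{Q_0}$ have been constructed satisfying (a)--(c) for indices $\le j$, with every cube of $\F_j$ meeting $F$. For each $Q\in\F_j$ note that $\mu$ is dyadically doubling on $Q$ with the same constant $C_\mu$ (because $\D_Q\setminus\{Q\}\subset\D_{Q_0}\setminus\{Q_0\}$) and $\mu(F\cap Q)=0$; hence Lemma~\ref{lem:cover} gives a length-$2$ good $\varepsilon_0$-cover $\{\mathcal{O}^Q_1,\mathcal{O}^Q_2\}$ of $F\cap Q$ with families $\F^Q_1,\F^Q_2\subset\D_Q$, where $\mu(\mathcal{O}^Q_2)\le\varepsilon_0\mu(Q)$ and (after discarding cubes missing $F$, as permitted by Remark~\ref{remark:good-cover-iter}) $F\cap Q'\neq\emptyset$ for all $Q'\in\F^Q_2$. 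Then set
\[
\mathcal{O}_{j+1}:=\bigcup_{Q\in\F_j}\mathcal{O}^Q_2,
\qquad
\F_{j+1}:=\bigcup_{Q\in\F_j}\F^Q_2,
\]
and verify the required properties: $\F_{j+1}\subset\D_{Q_0}$ is pairwise disjoint (within a fixed $Q$ by construction, across distinct $Q,Q'\in\F_j$ because $Q\cap Q'=\emptyset$) and $\mathcal{O}_{j+1}=\bigcup_{Q'\in\F_{j+1}}Q'$, giving (b); $\mathcal{O}^Q_2\subset Q$ yields $\mathcal{O}_{j+1}\subset\mathcal{O}_j$, and $F\subset\mathcal{O}_j=\bigcup_{Q\in\F_j}Q$ together with $F\cap Q\subset\mathcal{O}^Q_2$ yields $F\subset\mathcal{O}_{j+1}$, giving (a); and for $Q^j\in\F_j$ the disjointness of $\F_j$ forces $\mathcal{O}_{j+1}\cap Q^j=\mathcal{O}^{Q^j}_2$, so $\mu(\mathcal{O}_{j+1}\cap Q^j)\le\varepsilon_0\mu(Q^j)$, giving (c) for the step $j\to j+1$. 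This closes the induction and yields a good $\varepsilon_0$-cover $\{\mathcal{O}_\ell\}_{\ell\ge1}$, $\{\F_\ell\}_{\ell\ge1}$ of $F$ of length $\infty$.

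I do not anticipate a real obstacle: the essential idea is that the finiteness of the length in Lemma~\ref{lem:cover} is only a limitation of the hypothesis $\mu(F)\le\alpha\mu(Q_0)$ with $\alpha>0$, and it evaporates when $\mu(F)=0$ since one can always restart inside the current top generation. The only points needing a modicum of care are the inheritance of dyadic doubling by subcubes (used above) and the check that the concatenated families remain pairwise disjoint and contained in $\D_{Q_0}$; both are immediate once one unwinds the definitions. One should also keep in mind that in the good-cover formalism the cubes are implicitly required to meet $F$ (Remark~\ref{remark:good-cover-iter}), which is precisely what makes the inductive hypothesis self-perpetuating.
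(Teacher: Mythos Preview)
Your proposal is correct and follows the same strategy as the paper: both prove Lemma~\ref{lem:infinite} by iterating Lemma~\ref{lem:cover} inside the cubes of the current top generation, using \eqref{eq:OO} from Remark~\ref{remark:good-cover-iter} to secure condition~(c) at the gluing step. The only difference is bookkeeping: the paper applies Lemma~\ref{lem:cover} once with a fixed $\alpha<\varepsilon_0^2/(2C_\mu^2)$ to harvest $k_0\ge 2$ levels per generation (discarding the first inner level at each restart), while you restart after every single step, invoking only a length-$2$ cover and extracting just $\mathcal{O}^Q_2$. Your version is slightly cleaner to state and verify, at no cost; the paper's packaging saves a few applications of the lemma but requires more index juggling at the seams.
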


\begin{proof}
	We are going to iterate Lemma~\ref{lem:cover}. Given $0<\varepsilon_0 \le e^{-1}$ and $0<\alpha< \varepsilon_0^2/(2C_{\mu}^2)$, let $k_0=k_0(\alpha,\varepsilon_0,C_\mu)\in\N$, $k_0\ge 2$, be the value from Lemma~\ref{lem:cover} so that $k_0 \approx \frac{\log \alpha^{-1}}{\log \varepsilon_0^{-1}}$. 
	 Let $F \subset Q_0$ with $\mu(F)=0$. Using that $\mu(F)<\alpha \mu(Q_0)$, Lemma~\ref{lem:cover} gives $\{\mathcal{O}^1_{\ell}\}_{\ell=1}^{k_0}$, a good $\varepsilon_0$-cover of length $k_0$  of $F$ with $\F_\ell^1\subset\D_{Q_0}$ the associated families of pairwise disjoint cubes. This is the first generation in the construction.
	
	To obtain the second generation take an arbitrary $Q \in \F^1_{k_0}$ and note that $\mu(F \cap Q)=0<\alpha \mu(Q)$. We apply again Lemma \ref{lem:cover} in $Q$ to $F \cap Q$ (which is not empty by Remark~\ref{remark:good-cover-iter}) and obtain a good $\varepsilon_0$-cover $\{\widetilde{\mathcal{O}}^2_{\ell}(Q)\}_{\ell=1}^{k_0}$ of $F \cap Q$ with associated families of pairwise disjoint cubes	$\widetilde{\F}_\ell^2(Q)\subset\D_Q$, $1\le\ell\le k_0$, and so that
	\[
	F\cap Q \subset \widetilde{\mathcal{O}}^2_{k_0}(Q) \subset \widetilde{\mathcal{O}}^2_{k_0-1}(Q)  \subset \dots \subset \widetilde{\mathcal{O}}^2_{2}(Q) \subset \widetilde{\mathcal{O}}^2_1(Q) \subset Q\in \F^1_{k_0}
	\]
	Write $\widetilde{\mathcal{O}}^2_\ell:=\bigcup_{Q \in \F^1_{k_0}} \widetilde{\mathcal{O}}^2_\ell (Q)$ and    $\widetilde{\F}_\ell^2=\bigcup_{Q \in \F^1_{k_0}} \widetilde{\mathcal{F}}^2_\ell (Q)$   for $1\le\ell\le k_0$. Since for each $Q \in \F^1_{k_0}$ and for each $1\le \ell\le k_0$ the family $\widetilde{\F}_\ell^2(Q)\subset\D_Q$ is pairwise disjoint, and  the family $\F^1_{k_0}$ is also pairwise disjoint  we easily conclude that $\widetilde{\F}_\ell^2$ is a pairwise disjoint family. Besides, 
	\begin{multline}\label{eq:FOO}
		F = F\cap \mathcal{O}_{k_0}^1=\bigcup_{Q \in \F^1_{k_0}} F\cap Q\subset 		
		\widetilde{\mathcal{O}}^2_{k_0} \subset \widetilde{\mathcal{O}}^2_{k_0-1}  \subset \dots \subset \widetilde{\mathcal{O}}^2_{2} \subset \widetilde{\mathcal{O}}^2_1
		\\		
		\subset \bigcup_{Q \in \F^1_{k_0}} Q= \mathcal{O}_{k_0}^1
		\subset \mathcal{O}^1_{k_0-1} \subset \dots \subset \mathcal{O}^1_1 \subset Q_0. 
	\end{multline}
	Set $\mathcal{O}^2_\ell:=\mathcal{O}^1_\ell$ for $1\le \ell\le  k_0$ and $\mathcal{O}^2_\ell:=\widetilde{\mathcal{O}}^2_{\ell-k_0+1}$ for $k_0+1\le \ell\le  2(k_0-1)+1$. Write $\F^2_\ell\subset\D_{Q_0}$ for the associated families of pairwise disjoint dyadic cubes  for $1\le\ell\le 2(k_0-1)+1$. Our goal is to show that  $\{\mathcal{O}^2_{\ell}\}_{\ell=1}^{2(k_0-1)+1}$ is a good $\varepsilon_0$-cover of $F$ whose length is $2(k_0-1)+1$. By \eqref{eq:FOO} and the previous construction, (a) and (b) in Definition~\ref{def:good-cover} clearly hold. We then need to verify (c). With this goal in mind we note that since $\{\mathcal{O}^1_\ell\}_{\ell=1}^{k_0}$ is a good $\varepsilon_0$-cover, we obtain 
	\begin{align*}
	\mu(\mathcal{O}^2_{\ell} \cap Q)=\mu(\mathcal{O}^1_{\ell} \cap Q) \le \varepsilon_0 \mu(Q), \quad\forall\, Q \in \F^2_{\ell-1}=\F^1_{\ell-1}, \, 2\le \ell \le k_0. 
	\end{align*}
	Also, if $Q \in \F^2_{k_0}=\F^1_{k_0}$ then \eqref{eq:OO} in  Remark~\ref{remark:good-cover-iter} applied to $\widetilde{\mathcal{O}}^2_2(Q)$ gives
	\begin{align*}
	\mu(\mathcal{O}^2_{k_0+1} \cap Q)
	=
	\mu(\widetilde{\mathcal{O}}^2_{2} \cap Q)
	=
	\mu(\widetilde{\mathcal{O}}^2_2(Q)) 
	\le \varepsilon_0 \mu(Q). 
	\end{align*} 
	On the other hand, let  $k_0+2\le \ell\le 2(k_0-1)+1$ and let $Q \in \F^2_{\ell-1}=\widetilde{\F}^2_{\ell-k_0}$. By construction, there exists $Q'\in \F_{k_0}^1$ so that
	 $Q \in\widetilde{\F}^2_{\ell-k_0}(Q')\subset \D_{Q'}$. Then, 
	\begin{align*}
	\mu(\mathcal{O}^2_{\ell} \cap Q)
	=
	\mu(\widetilde{\mathcal{O}}^2_{\ell-k_0+1} \cap Q)
	=
		\mu(\widetilde{\mathcal{O}}^2_{\ell-k_0+1}(Q') \cap Q)
	\le 
	\varepsilon_0 \mu(Q), 
	\end{align*}
	where we have used that $\{\widetilde{\mathcal{O}}^2_\ell(Q')\}_{\ell=1}^{k_0}$ is a good $\varepsilon_0$-cover. All these show (c) and as result $\{\mathcal{O}^2_{\ell}\}_{\ell=1}^{2(k_0-1)+1}$ is a good $\varepsilon_0$-cover of $F$ whose length is $2(k_0-1)+1$.

	The third generation is obtained in the very same way, we take  $Q \in \F^2_{2(k_0-1)+1}$  and note that $\mu(F \cap Q)=0<\alpha \mu(Q)$. We apply again Lemma \ref{lem:cover} in $Q$ to $F \cap Q$ and obtain $\{\widetilde{\mathcal{O}}^3_{\ell}(Q)\}_{\ell=1}^{k_0}$, a good $\varepsilon_0$-cover of $F \cap Q$ (which is not empty by Remark~\ref{remark:good-cover-iter}) with $\widetilde{\F}_\ell^3(Q)\subset\D_Q$ the associated families of pairwise disjoint cubes. We set $\widetilde{\mathcal{O}}^3_\ell:=\bigcup_{Q \in \F^2_{2(k_0-1)+1}} \widetilde{\mathcal{O}}^3_\ell (Q)$ and    $\widetilde{\F}_\ell^3=\bigcup_{Q \in \F^2_{2(k_0-1)+1}} \widetilde{\mathcal{F}}^3_\ell (Q)$   for $1\le\ell\le k_0$.	
	Define $\mathcal{O}^3_\ell:=\mathcal{O}^2_\ell$ for $1\le \ell\le 2(k_0-1)+1=2k_0-1$ and $\mathcal{O}^3_\ell:=\widetilde{\mathcal{O}}^3_{\ell-2(k_0-1)}$ for $2k_0\le \ell\le  3(k_0-1)+1$. Write $\F^3_\ell\subset\D_{Q_0}$ for the associated families of pairwise disjoint dyadic cubes  for $1\le\ell\le 3(k_0-1)+1$. The same argument allows us to show that  $\{\mathcal{O}^3_{\ell}\}_{\ell=1}^{3(k_0-1)+1}$ is a good $\varepsilon_0$-cover of $F$ whose length is $3(k_0-1)+1$.
	
	If we iterate this construction in $N$ steps we will have  constructed $\{\mathcal{O}^N_{\ell}\}_{\ell=1}^{N(k_0-1)+1}$, a good $\varepsilon_0$-cover of $F$ whose length is $N(k_0-1)+1$. We observe that such iteration procedure works because $\mu(F)=0$, hence $\mu(F\cap Q)=0$ for every $Q\in\D_{Q_0}$ and also because $F\cap Q\neq\emptyset$ for every $Q$ in each of the families that define the  good $\varepsilon_0$-cover (see Remark~\ref{remark:good-cover-iter}). Since $k_0\ge 2$ and we can continue with this iteration infinitely many times we eventually obtain an infinite good $\varepsilon_0$-cover. 
\end{proof}

To continue we need to introduce some notation and some auxiliary result from \cite{CHMT}. Given $\eta=2^{-k_*}<1$ small enough to be chosen momentarily and given $Q\in\D(\pom)$ we define $Q^{(\eta)}\in\D_Q$ to be the unique dyadic cube such that $x_Q\in Q^{(\eta)}$ and $\ell(Q^{(\eta)})=\eta\ell(Q)$. 

\begin{lemma}[{\cite[Lemma~3.24]{CHMT}}]\label{lemma:oscilla}
There exist $0<\eta=2^{-k_*} \ll 1$ and $\varepsilon_0\ll 1$ small enough, and $c_0\in(0,1/2)$ (depending only on dimension, the $1$-sided CAD constants and the ellipticity of $L$)  with the following significance. Suppose that $\emptyset \neq F\subset Q_0 \in\D(\pom)$ is a Borel set and that $\{\mathcal{O}_{\ell}\}_{\ell=1}^{k}$ is a good $\varepsilon_0$-cover of $F$ with respect to $\omega_{L}^{X_{Q_0}}$ of length $k\in \N$, with associated pairwise disjoint families $\{\F_\ell\}_{1\le\ell\le k}\subset \D_{Q_0}$.  Define, $\mathcal{O}_{\ell}^{(\eta)}=\bigcup_{Q\in\F_\ell} Q^{(\eta)}$, for each $1\le \ell \le k$, and consider the Borel set $S_k:=\bigcup_{\ell=2}^k (\mathcal{O}_{\ell-1}^{(\eta)}\setminus \mathcal{O}_{\ell})$. For each $y\in F$ and $1\le \ell \le k$, let $Q^\ell(y)\in\F_\ell$ be the unique dyadic cube containing $y$, and let $P^\ell(y)\in \D_{Q^\ell(y)}$ be the unique dyadic cube containing $y$ with $\ell(P^\ell(y))=\eta\ell(Q^\ell(y))$. Then $u_k(X):=\omega_L^X(S_k)$, $X\in\Omega$, satisfies 
\begin{align}\label{eq:uS}
	|u_k(X_{(Q^{\ell}(y))^{(\eta)}}) - u_k(X_{(P^{\ell}(y))^{(\eta)}})| \ge c_0,
	\qquad  1\le \ell\le k-1.
\end{align}
\end{lemma}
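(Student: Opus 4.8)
The plan is to follow the construction of \cite[Lemma~3.24]{CHMT}, which adapts that of \cite{KKiPT}. The guiding principle is that the bounded weak solution $u_k(X)=\omega_L^X(S_k)$ has been engineered so that $S_k$ is \emph{dense} (in $\mu:=\omega_L^{X_{Q_0}}$--measure) inside the central refinement $(Q^\ell(y))^{(\eta)}$, but \emph{sparse} inside the cube $P^\ell(y)$ carrying $y$ at the same scale; hence $u_k$ will be bounded below by a fixed constant at $X_{(Q^\ell(y))^{(\eta)}}$ and above by a much smaller constant at $X_{(P^\ell(y))^{(\eta)}}$, and the gap between the two thresholds is the claimed $c_0$. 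The three analytic tools I would use are: (i)~the dyadic doubling of $\mu$ on $Q_0$ (a consequence of \eqref{eq:doubling} and Harnack's inequality, already built into Lemma~\ref{lem:cover}), which in particular gives $\mu(R^{(\eta)})\approx_\eta\mu(R)$ for every $R\in\D_{Q_0}$; (ii)~the change--of--pole estimate $\omega_L^{X_R}(E)\approx\mu(E)/\mu(R)$ for Borel $E\subset R$ with $R\in\D_{Q_0}$, a standard consequence of the comparison principle (cf.~Lemma~\ref{lemma:proppde}); and (iii)~the boundary Hölder continuity of $L$--solutions (see \cite{HMT2}), which yields $\omega_L^X(\pom\setminus R)\lesssim\big(\delta(X)/\dist(X,\pom\setminus R)\big)^\beta$ for a corkscrew--type point $X$ lying over a dyadic cube $R$, with $\beta>0$ depending only on the allowable parameters. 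The parameters are to be fixed in this order: first $\eta=2^{-k_*}$ small enough that $C\eta^\beta\le c_0/2$ in~(iii); then $\varepsilon_0$ small enough that $\varepsilon_0\,\eta^{-\gamma}C_\mu^{2k_*}\le c_0/2$, where $\eta^{-\gamma}$ bounds the cost of a Harnack chain of length $\approx\log(1/\eta)$ joining a cube to its central refinement; and finally $c_0$ in terms of all of these.

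For the lower bound at $(Q^\ell(y))^{(\eta)}$, fix $y\in F$ and $1\le\ell\le k-1$, and write $Q=Q^\ell(y)\in\F_\ell$, $Q'=Q^{(\eta)}$. Since $\ell+1\le k$, the set $\mathcal{O}_\ell^{(\eta)}\setminus\mathcal{O}_{\ell+1}$ is one of the (pairwise disjoint) pieces whose union is $S_k$, and as $Q'\subset\mathcal{O}_\ell^{(\eta)}$ this gives $S_k\cap Q'\supseteq Q'\setminus\mathcal{O}_{\ell+1}$. Property~(c) of Definition~\ref{def:good-cover} gives $\mu(\mathcal{O}_{\ell+1}\cap Q)\le\varepsilon_0\mu(Q)$, and dyadic doubling gives $\mu(Q')\ge C_\mu^{-k_*}\mu(Q)$, whence $\mu(S_k\cap Q')\ge(1-\varepsilon_0 C_\mu^{k_*})\mu(Q')\ge\tfrac12\mu(Q')$. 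By the change--of--pole estimate at $Q'$, $u_k(X_{Q'})\ge\omega_L^{X_{Q'}}(S_k\cap Q')\gtrsim\mu(S_k\cap Q')/\mu(Q')\gtrsim 1$, so $u_k(X_{(Q^\ell(y))^{(\eta)}})\ge 2c_0$ for a suitable $c_0>0$.

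The upper bound at $(P^\ell(y))^{(\eta)}$ is the main obstacle. Write $P=P^\ell(y)\in\D_Q$ (so $\ell(P)=\eta\ell(Q)$ and $y\in P$) and $P'=P^{(\eta)}$, and split $S_k=(S_k\cap P)\cup(S_k\setminus P)$. For the part outside $P$: the point $X_{P'}$ lies at depth $\approx\eta^2\ell(Q)$ over the center $x_P$ of $P$, while $\dist(x_P,\pom\setminus P)\gtrsim\ell(P)=\eta\ell(Q)$; hence $\dist(X_{P'},\pom\setminus P)\gtrsim\eta\ell(Q)$ and~(iii) yields $\omega_L^{X_{P'}}(S_k\setminus P)\le\omega_L^{X_{P'}}(\pom\setminus P)\lesssim\eta^\beta\le c_0/2$. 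For the part inside $P$: the combinatorial heart of the matter is that $y\in Q^{\ell'}(y)\subset\mathcal{O}_{\ell'}$ for every $\ell'\le k$ and the cubes $Q^{\ell'}(y)$ are nested and decreasing, so a point $z\in Q^{\ell'}(y)$ never lies in $\mathcal{O}_{j-1}^{(\eta)}\setminus\mathcal{O}_j$ when $j\le\ell'$; with $\ell'=\ell$ this shows that the pieces of $S_k$ indexed by $j\le\ell$ miss $Q\supseteq P$ entirely, the piece indexed by $j=\ell+1$ meets $Q$ only in $Q^{(\eta)}=Q'$ (disjoint from $P$, being a distinct dyadic cube of the same side length, apart from the degenerate case $P=Q'$), and the pieces indexed by $j\ge\ell+2$ lie in $\mathcal{O}_{\ell+1}$; therefore $S_k\cap P\subset\mathcal{O}_{\ell+1}\cap P$ with $\mu(S_k\cap P)\le\mu(\mathcal{O}_{\ell+1}\cap Q)\le\varepsilon_0\mu(Q)$. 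Moving the pole from $X_{P'}$ up to $X_P$ along a Harnack chain of length $\approx\log(1/\eta)$ costs a factor at most $\eta^{-\gamma}$, so the change--of--pole estimate at $P$ together with $\mu(P)\ge C_\mu^{-k_*}\mu(Q)$ gives $\omega_L^{X_{P'}}(S_k\cap P)\lesssim\eta^{-\gamma}\,\mu(S_k\cap P)/\mu(P)\lesssim\eta^{-\gamma}C_\mu^{k_*}\varepsilon_0\le c_0/2$; adding the two contributions, $u_k(X_{P'})\le c_0$.

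Combining the two bounds gives $|u_k(X_{(Q^\ell(y))^{(\eta)}})-u_k(X_{(P^\ell(y))^{(\eta)}})|\ge 2c_0-c_0=c_0$ for every $1\le\ell\le k-1$, which is~\eqref{eq:uS}. I expect the genuinely delicate points to be, first, the bookkeeping of exactly which pieces of $S_k$ can reach into $P$ --- together with a separate (minor) treatment of the degenerate configurations in which $y$ already lies in a central refinement --- and, second, the passage from the $\mu$--density bound on $S_k\cap P$ to the pointwise bound on $\omega_L^{X_{P'}}(S_k\cap P)$ via a quantitatively controlled Harnack chain and the comparison principle; throughout, the order in which $\eta$, $\varepsilon_0$, and $c_0$ are chosen must be respected.
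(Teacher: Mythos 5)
This lemma is stated in the paper purely as a citation of \cite[Lemma~3.24]{CHMT} and no proof is given here, so there is no in-paper argument to compare against; I will assess your reconstruction on its own merits. The overall scheme you propose is the right one: bound $u_k$ from below at $X_{(Q^\ell(y))^{(\eta)}}$ using the large $\mu$-density of $S_k$ in $(Q^\ell(y))^{(\eta)}$ together with a change-of-pole estimate, and bound it from above at $X_{(P^\ell(y))^{(\eta)}}$ by splitting $S_k$ into the part outside $P^\ell(y)$ (controlled by boundary H\"older continuity) and the part inside (controlled by property~(c) of the cover). In the non-degenerate case this is sound. One small but real issue: your parameter order is circular, since you choose $\eta$ and $\varepsilon_0$ in terms of $c_0$ and only then define $c_0$. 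This is repairable --- the lower bound produces an absolute constant from the change-of-pole estimate, independent of $\eta$ and $\varepsilon_0$ once $\varepsilon_0 C_\mu^{k_*}\le 1/2$; set $c_0$ equal to (a fraction of) that constant first, then pick $\eta$, then $\varepsilon_0$.

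The genuine gap is the configuration $P^\ell(y)=(Q^\ell(y))^{(\eta)}$, which you dismiss as a ``minor'' aside. It is not minor; it breaks the upper bound entirely. In that case the inclusion $S_k\cap P^\ell(y)\subset \mathcal{O}_{\ell+1}\cap P^\ell(y)$ that your estimate rests on fails: the $j=\ell+1$ piece of $S_k$ restricted to $P^\ell(y)$ is exactly $P^\ell(y)\setminus\mathcal{O}_{\ell+1}$, so $S_k\cap P^\ell(y)\supset P^\ell(y)\setminus\mathcal{O}_{\ell+1}$, and property~(c) together with dyadic doubling gives $\mu(P^\ell(y)\setminus\mathcal{O}_{\ell+1})\ge (1-\varepsilon_0 C_\mu^{k_*})\,\mu(P^\ell(y))$. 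Thus $S_k$ fills most of $P^\ell(y)$, and the very same change-of-pole/Harnack-chain argument you use for the lower bound then forces $u_k(X_{(P^\ell(y))^{(\eta)}})$ to be bounded \emph{below} by an absolute constant, not above by $c_0$. Nothing in the good-cover structure forces a gap between $u_k(X_{(Q^\ell(y))^{(\eta)}})$ and $u_k(X_{(P^\ell(y))^{(\eta)}})$ in this situation, and the configuration $y\in(Q^\ell(y))^{(\eta)}$ certainly occurs for some $y\in F$ and some $\ell$. So, as written, your argument does not establish \eqref{eq:uS} for all $1\le\ell\le k-1$. Closing the gap requires either a genuinely different argument specific to the degenerate case --- something that extracts a definite oscillation of $u_k$ between the scales $\eta\ell(Q^\ell(y))$ and $\eta^2\ell(Q^\ell(y))$ from further structural features of $S_k$ --- or a reorganization of the proof under which the degenerate configuration never enters; this is what you should look for when consulting \cite[Lemma~3.24]{CHMT}.
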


\medskip

We are now ready to prove  Lemma~\ref{lem:square}:

\begin{proof}[Proof of Lemma~\ref{lem:square}]
	Fix $Q_0 \in \D$ and a Borel set $\emptyset\neq F \subset Q_0$ with $\w_L^{X_{Q_0}}(F )=0$. Let $\eta=2^{-k_*}$ and $ \varepsilon_0$ be small enough, and $c_0$ from  Lemma~\ref{lemma:oscilla}.  From Lemma~\ref{lemma:proppde} and Harnack's inequality we have that $\w_L^{X_{Q_0}}$ is Borel regular dyadically doubling measure on $Q_0$. Applying Lemma~\ref{lem:infinite} with $\mu=\w_L^{X_{Q_0}}$, one can find $\{\mathcal{O}_{\ell}\}_{\ell=1}^{\infty}$, a good $\varepsilon_0$-cover of $F$ of length $\infty$. In particular, for every $N\in \N$,  $\{\mathcal{O}_{\ell}\}_{\ell=1}^{N}$ is a good $\varepsilon_0$-cover of $F$ of length $N$. As such we can invoke Lemma~\ref{lemma:oscilla} to obtain $S_N:=\bigcup_{\ell=2}^N (\mathcal{O}_{\ell-1}^{(\eta)}\setminus \mathcal{O}_{\ell})$ so that with the notation introduced in that result the $L$-solution $u_N(X):=\omega_L^X(S_N)$, $X\in\Omega$, satisfies 
	\begin{align}\label{eq:uS:proof}
		|u_N(X_{(Q^{\ell}(y))^{(\eta)}}) - u_N(X_{(P^{\ell}(y))^{(\eta)}})| \ge c_0,
		\qquad \forall\,y\in F, \ 1\le \ell\le N-1.
	\end{align}
	
	Define next $S:=\bigcup_{\ell=2}^\infty (\mathcal{O}_{\ell-1}^{(\eta)}\setminus \mathcal{O}_{\ell})$ and $u(X):=\omega_L^X(S)$, $X\in\Omega$. By the monotone convergence theorem $u_N(X)\longrightarrow u(X)$ as $N\to\infty$ for every $X\in\Omega$. Thus, \eqref{eq:uS:proof} readily gives
		\begin{align}\label{eq:uS:proof1}
		|u(X_{(Q^{\ell}(y))^{(\eta)}}) - u(X_{(P^{\ell}(y))^{(\eta)}})| \ge c_0,
		\qquad \forall\,y\in F, \ \ell\ge 1.
	\end{align}
	This and the argument in \cite[p.~7919]{CHMT} imply
	\begin{align}\label{eq:Qil}
	\bigg(\iint_{U_{Q^{\ell}(y), \eta^3}^{\vartheta_0}} |\nabla u(Y)|^2 \delta(Y)^{1-n}\, dY\bigg)^{\frac12}
	\gtrsim_\eta c_0, \qquad	\forall\, \ell\ge 1. 
\end{align}	
Thus, for every $N\ge 1$ and every $y\in F$ 
\begin{multline*}
N c_0^2
\lesssim_\eta 
\sum_{\ell=1}^N \iint_{U_{Q^{\ell}(y), \eta^3}^{\vartheta_0}} |\nabla u(Y)|^2 \delta(Y)^{1-n}\, dY
\lesssim_\eta
\iint_{\bigcup_{\ell=1}^NU_{Q^{\ell}(y), \eta^3}^{\vartheta_0}} |\nabla u(Y)|^2 \delta(Y)^{1-n}\, dY
\\
\le
\iint_{\bigcup\limits_{y\in Q\in\D_{Q_0}}U_{Q, \eta^3}^{\vartheta_0}} |\nabla u(Y)|^2 \delta(Y)^{1-n}\, dY
=
S_{Q_0}^{\vartheta_0,\eta} u(y)^2,
\end{multline*}
where we have used that the family $\{U_{Q,\eta^3}^{\vartheta_0}\}_{Q\in\D_{\pom}}$ has bounded overlap albeit with a constant that depends on $\eta$.
Letting $N\to\infty$ we conclude that $S_{Q_0}^{\vartheta_0,\eta} u(y)=\infty$ for every $y\in F$ and the proof is complete. 
\end{proof}


\end{document}